\def\namedlabel#1#2{\begingroup
    #2%
    \def\@currentlabel{#2}%
    \phantomsection\label{#1}\endgroup
}
\tikzstyle directed=[postaction={decorate,decoration={markings,
    mark=at position #1 with {\arrow{>}}}}]
\tikzstyle rdirected=[postaction={decorate,decoration={markings,
    mark=at position #1 with {\arrow{<}}}}]
\tikzset{fontscale/.style = {font=\relsize{#1}}
    }
\tikzset{snake it/.style={decorate, decoration=snake}}
\def\FT{\mathcal{FT}}  
\def\Cob{\mathsf{Cob}} 
\theoremstyle{plain}
\newtheorem{theorem}{Theorem}[section]
\newtheorem{proposition}[theorem]{Proposition}
\newtheorem{lemma}[theorem]{Lemma}
\newtheorem{algorithm}[theorem]{Algorithm}
\theoremstyle{definition}
\newtheorem{definition}[theorem]{Definition}
\newtheorem{notation}[theorem]{Notation}
\theoremstyle{remark}
\newtheorem{remark}[theorem]{Remark}
\numberwithin{equation}{section}
\renewcommand{\to}{\rightarrow}
\def\1{\mathsf{1}}
\newcommand\nc{\newcommand}
\nc\rnc{\renewcommand}
\nc\Kar{\operatorname{Kar}} 
\nc\End{\operatorname{End}}
\newlength\cellsize \setlength\cellsize{10\unitlength}
\newcommand\cellify[1]{\def\thearg{#1}\def\nothing{}%
\ifx\thearg\nothing\vrule width0pt height\cellsize depth0pt%
  \else\hbox to 0pt{\usebox2\hss}\fi%
  \vbox to 10\unitlength{\vss\hbox to 10\unitlength{\hss$#1$\hss}\vss}}
\newcommand\tableau[1]{\vtop{\let\\=\cr
\setlength\baselineskip{-10000pt}
\setlength\lineskiplimit{10000pt}
\setlength\lineskip{0pt}
\halign{&\cellify{##}\cr#1\crcr}}}
\let\hat=\widehat
\let\epsilon=\varepsilon
  \newcommand{\details}[1]{
      \ \\
      {
        \textbf{Details:} #1
      }
      \\
  }
  \newcommand{\details}[1]{}
\newcommand{\map}[1]{\xrightarrow{#1}}
\newcommand{\TL}{\mathrm{TL}} 
\newcommand{\JNF}{\mathrm{JNF}} 
\newcommand{\len}{\mathrm{len}} 
\newcommand{\inv}{^{-1}} 
\newcommand{\CKh}{\mathrm{CKh}} 
\newcommand{\Kh}{\mathrm{Kh}} 
\newcommand{\ftbraid}{ft} 
\newcommand{\kap}{
    \begin{scope}[xscale=.5]
    \draw (0,1) arc (90:270:1cm);
    \draw[dotted] (0,1) arc (90:-90:1cm);
    \end{scope}
    \begin{scope}[xscale=1.5]
        \draw (0,1) arc (90:-90:1cm);
    \end{scope}
}
\newcommand{\kapdot}{
    \kap
    \node (dot) at (1,0) {$\bullet$};
}
\newcommand{\kup}{
    \begin{scope}[xscale=-.5]
    \draw (0,1) arc (90:270:1cm);
    \draw (0,1) arc (90:-90:1cm);
    \end{scope}
    \begin{scope}[xscale=-1.5]
        \draw (0,1) arc (90:-90:1cm);
    \end{scope}
}
\newcommand{\kupdot}{
    \kup
    \node (dot) at (-1,0) {$\bullet$};
}
\newcommand{\G}{\mathcal{G}}
\newcommand{\tar}{\mathrm{tar}}
\newcommand{\init}{\mathrm{init}}
\newcommand{\fin}{\mathrm{fin}}
\newcommand{\AC}{\mathrm{AC}}
\newcommand{\Mat}{\mathrm{Mat}}
\definecolor{Purple}{rgb}{.36,.22,.6}
\definecolor{Orange}{rgb}{.9,.38,0}
\newcommand{\sgn}{\mathrm{sgn}}
  \def\p@subfigure{\thefigure}
\begin{document}

\title[Whittled complex for torus links]{A whittled complex for the Khovanov homology of torus links}

\author[Caprau]{Carmen Caprau}
\address{Department of Mathematics, California State University, Fresno, CA 93740}
\email{ccaprau@csufresno.edu}

\author[N. Gonz{\'a}lez]{Nicolle Gonz{\'a}lez}
\address{
    Department of Mathematics, 
    The University of British Columbia, 
    Vancouver, BC, Canada
}
\email{nicolle@math.ubc.ca}

\author[Lee]{Christine Ruey Shan Lee}
\address{Department of Mathematics, Texas State University, San Marcos, TX 78666}
\email{vne11@txstate.edu}

\author[Sazdanovi\'{c}]{Radmila Sazdanovi\'{c}}
\address{Department of Mathematics, North Carolina State University, Raleigh NC 27695-88205}
\email{rsazdan@ncsu.edu}

\date{\today}

\begin{abstract}
We give an algorithm for reducing the number of generators of the Khovanov chain complex of the torus braid $\ftbraid^k_n = (\sigma_1\sigma_2\cdots \sigma_{n-1})^k$ on $n$ strands by applying Bar-Natan Gaussian elimination along a distinguished set of Gaussian elimination isomorphisms. We call the resulting complex $\FT^k_n$ a \emph{whittled complex} for the Khovanov homology of torus braids. Using this algorithm, we provide a bound for the number of generators at a fixed homological degree in our whittled complex.

\end{abstract}

\maketitle

\tableofcontents

\section{Introduction}
\label{sec:introduction}
Since its introduction, Khovanov's categorification of the Jones polynomial \cite{Khovanov-Jones} has revolutionized the study of quantum link invariants, leading to major applications in low-dimensional topology \cite{Piccirillo-Conway, Rasmussen-slice}. Despite this, little is known about the structure of this homology theory even for specialized classes of knots, such as torus knots \cite{GOR}, due to the combinatorial complexity of the associated chain complexes. 

In this paper we study the structure of the Khovanov homology of infinite torus links. Let $\ftbraid_n = \sigma_1\sigma_2\cdots\sigma_{n-1}$ denote a braid in the braid group $B_n$ consisting of a factor of a full twist braid $\triangle_n^2 = (\sigma_1\sigma_2\cdots\sigma_{n-1})^n$ on $n$ strands. See Notation \ref{n.braidgen} for our convention for the generators of the braid group after Artin \cite{Artin-braids}.
We denote by $\ftbraid_n^k$ the braid word $(\sigma_1\sigma_2\cdots\sigma_{n-1})^{k}$ that is $k$ copies of $\ftbraid_n$.  The Khovanov homology of the \emph{infinite torus braid} on $n$ strands is defined as the direct limit of the homology groups $\Kh(\ftbraid_n^k)$ as $k\rightarrow \infty$ \cite{ROZ}. Taking the closure $\widehat{\ftbraid_n^k}$ of the braid $\ftbraid_n^k$, the Khovanov homology of the \emph{infinite torus link} is defined as the direct limit of $\Kh(\widehat{\ftbraid_n^k})$ as $k\rightarrow \infty$. 
Sto{\v{s}}i{\'c}'s work \cite{stosic2007homological} was the first that shows that Khovanov homology of torus knots stabilizes, which implies that the direct limit exists and is well-defined, and Rozansky's work \cite{ROZ} generalizes the stability of the Khovanov homology groups to tangles. 

The full twists on $n$ strands, denoted in this paper by $\ftbraid^n_n = \triangle^2_n$, is a central object in low-dimensional and quantum topology that has been extensively studied in relation to the braid group \cite{Artin-braids} and the links their closures represent, their Garside normal structures \cite{Dehornoy-Garside}, and braid group representations \cite{Feller-Hubbard}. There have also been many recent developments in its topological applications, especially with respect to knot invariants that behave well with respect to knot cabling, concordances, and filtration structures, a comprehensive account of which is beyond the scope of this paper.

The evaluation of the Khovanov homology of cabled knots is relevant to understanding the geometric information carried by the colored Jones polynomial, which is a generalization of the Jones polynomial that can be written as a sum of the Jones polynomials of cables of a diagram of the knot. For example, the Volume Conjecture posits that the asymptotic values of the colored Jones polynomial of a hyperbolic knot recovers the volume of the knot complement. We refer the reader to the work by Murakami and Murakami in \cite{Murakami-Murakami-cjpvol}, for the authoritative account on the Volume Conjecture. The colored Jones polynomial may be obtained by evaluating the Kauffman bracket of the blackboard cable of a diagram of the link decorated with a Jones-Wenzl projector\footnote{Lickorish does not call the invariant "the colored Jones polynomial."} \cite{Lickorish}.  Rozansky \cite{ROZ} showed that the Khovanov homology of infinite full twists on $n$ strands recovers the categorification of the Jones-Wenzl projector by Cooper-Krushkal \cite{CK-JonesW}, which can then be used to obtain a categorification of the $n$th colored Jones polynomial. This was used in \cite{Lee-StableVolume} to relate Khovanov homology to hyperbolic volume, and in \cite{Lee-ComputeViaJW}, the third author proposes to simplify the computation of Khovanov homology by computing the categorification of the Jones-Wenzl projector. 

Given the potential applications, we are motivated by the problem of evaluating the Khovanov homology of infinite torus braids. In this paper, we develop a computational machinery to simplify the underlying chain complex of $\Kh(\ftbraid_n^k)$ by reducing the number of generators. Our approach is to deploy Gaussian eliminations developed by Bar-Natan in \cite{bar2007fast} at large scale to reduce, or \textit{whittle}, the Khovanov chain complex $\CKh(\ftbraid_n^k)$ into a much more manageable complex. We call this new complex \textit{the whittled complex} $\FT_n^k$ (Definition \ref{d.whittled_complex}). 

\subsection{Main results}
\begin{theorem}
\label{t.deflate}
Let $\FT_n^k$ denote the whittled complex obtained from whittling the Khovanov chain complex $\CKh(\ftbraid_n^k)$, and let $(\sigma, \epsilon)$ be an enhanced Kauffman state (Definition \ref{d.enhanced-K-state}) on $\ftbraid_n^k$. 
Suppose $(\sigma, \epsilon)$ appears in the whittled complex $\FT_n^k$ and let $W$  be the corresponding tangle in the Temperley-Lieb monoid $\TL_n$. 
Then either (1) the word is in the form 
\[ e_{n-1}^{k_0} V_0 e_{n-1}^{k_1} V_1 \cdots V_{r}e_{n-1}^{k_{r}},  \]
where each $V_j = e_{j_1}e_{j_2}\cdots e_{n-1}$ is a subword consisting of consecutive elements in $\TL_n$ ending at $e_{n-1}$ and $k_j \geq 2$ for all $j$. (2) Or, there exists a path of $\TL$-moves (see Definition \ref{n.tlmoves}) transforming $W$ to its Jones Normal Form $\hat W = \JNF(W)$ (Definition \ref{d.jnf}), where each move is one of the following three types:
\begin{itemize}
    \item[(D1)] \label{item:D1} $e_{n-1}^2 \to e_{n-1}$
    \item[(D2)] \label{item:D2} $e_i e_j \to e_je_i$, where $|i-j|\geq 2$
    \item[(D3)] \label{item:D3} $e_ie_{i-1}e_i \to e_i$, for $2 \leq i \leq n-1$
\end{itemize}
\end{theorem}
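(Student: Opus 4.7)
The plan is to analyze the whittled complex $\FT_n^k$ by correlating each distinguished Bar--Natan Gaussian elimination with a specific Temperley--Lieb move on the underlying tangle word. I would start by making explicit the bijection between enhanced Kauffman states $(\sigma,\epsilon)$ of $\ftbraid_n^k$ and TL words $W \in \TL_n$ carrying labels $\epsilon$, and then verifying that each isomorphism in the distinguished set is realized by a saddle cobordism whose two endpoints correspond to TL words related by exactly one of the moves (D1), (D2), or (D3). This step translates the homological question of what survives whittling into a purely combinatorial question about reduction paths in the TL monoid.

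With this correspondence in hand, the argument proceeds by a well-founded induction on a complexity measure of $W$, for instance word length together with the lexicographic distance to $\JNF(W)$. At each Gaussian elimination step the two paired states differ by exactly one of the three moves, and by construction the side closer to $\JNF(W)$ is the one retained by the whittling. Unwinding this over the full whittling process yields case (2): any surviving state whose word is not already in JNF admits a path of (D1)--(D3) moves to $\JNF(W)$. The remaining work is a combinatorial case analysis of the \emph{stuck} words, those for which no distinguished move can be applied in the reducing direction. I expect absence of (D3) to forbid any triple $e_i e_{i-1} e_i$, absence of (D2) to force the indices within each maximal block to be strictly increasing and consecutive, and absence of a further (D1) collapse to force each block to terminate at $e_{n-1}$. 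Together these should pin down each block to be a staircase $V_j = e_{j_1}e_{j_2}\cdots e_{n-1}$, and the constraint $k_j \ge 2$ on the separating $e_{n-1}$-powers arises because a single $e_{n-1}$ between two blocks would be absorbed into the adjacent $V_j$ and open a further reduction that whittling would apply.

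The main obstacle will be verifying that the distinguished set of Gaussian eliminations forms a coherent matching, so that $\FT_n^k$ is well defined independently of the order of eliminations. In long torus braids the same $e_{n-1}$ factor may in principle participate in several distinct potential eliminations, and one must establish a confluence (diamond-type) property certifying that whenever two distinguished eliminations are simultaneously available they commute, up to further eliminations. I would formulate this as a diamond lemma for the rewriting system generated by (D1), (D2), (D3) acting on marked TL words, and once this confluence is in place the characterization of surviving states as either case (1) or case (2) follows from the combinatorial analysis above.
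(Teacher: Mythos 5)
Your proposal contains a genuine gap in its central mechanism. You write that at each Gaussian elimination step ``the side closer to $\JNF(W)$ is the one retained by the whittling,'' but Bar--Natan's Gaussian elimination (Lemma \ref{l.geliminate}) removes \emph{both} the source and the target of the isomorphism; neither side is retained. The states surviving in $\FT_n^k$ are precisely those that are neither the source nor the target of any distinguished isomorphism in $\G$, so your well-founded induction ``unwinding the whittling process'' toward $\JNF(W)$ rests on a false premise. Relatedly, the moves (D1)--(D3) are not the moves realized by the G1/G2 eliminations: a G1 elimination can involve any generator $e_i$, whereas (D1) is restricted to $e_{n-1}$. In the paper the (D1)--(D3) moves play a different role: they are the \emph{residual} normalization moves that suffice to bring a \emph{surviving} word to $\JNF$, and the content of the theorem is that survivors need only these restricted moves. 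The actual argument is an induction on the number of strands $n$ in which one shows that any word containing the source or target of a distinguished isomorphism (e.g.\ $e_i^2$ for $i\le n-2$, a $-$-marked circle, or an $e_{n-1}e_n V e_{n-1} e_n$ pattern with $V$ commuting past $e_n$) is already eliminated, and this pattern-avoidance forces either the rigid staircase form of Case (1) or the form $e_n^{k_0}V_0 e_n^{k_1}$ of Case (2), which then visibly reduces by (D1)--(D3).

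Your proposed resolution of the ``main obstacle'' is also off target. The well-definedness of $\FT_n^k$ is not a confluence property of the rewriting system (D1)--(D3) on marked TL words --- a diamond lemma there would only reprove uniqueness of the Jones normal form, which is already Proposition \ref{prop:monotone-path}. What must actually be established is that the distinguished isomorphisms in $\G$ can be eliminated \emph{sequentially} without any elimination destroying another, and this is controlled by the directed graph $G$ whose vertices are elements of $\G$ and whose edges are nonzero connecting components of the Khovanov differential: one needs $G$ to be acyclic so that a topological order exists (Lemmas \ref{l.gacyclic} and \ref{l.whencycle}, Theorem \ref{t.G-acyclic}). Proving this acyclicity is the technical heart of the paper (Lemma \ref{l.unbarred_diff} and the induction on the trapezoidal sets $Z(b)$), and no amount of confluence for the TL rewriting system substitutes for it, since the obstruction lives in the cube of resolutions and its differentials, not in the monoid $\TL_n$.
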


The advantage of the whittled complex over the original complex is that the number of enhanced Kauffman states in the whittled complex is vastly reduced from the original Khovanov chain complex. Theorem \ref{t.deflate} allows us to bound the number of enhanced Kauffman states in the whittled complex $\FT^k_n$ at a fixed homological degree $h$, in terms of $n$ and $h$.

\begin{theorem}\label{t.count}
        For a fixed homological degree $h$, let $C(\FT^k_n, h)$ be the number of Kauffman states $s$ for which $(s, \epsilon)$ appears in the whittled complex $\FT^k_n$ and the homological degree of $(s, \epsilon) = h$. Then  
            \[   C(\FT^k_n, h) \leq  \left( \sum_{m\leq h} p(h, m) \right) + N(n, h) +  (p(n, 2)+2)C_n. \]
\end{theorem}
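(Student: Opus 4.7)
The plan is to partition the count $C(\FT^k_n, h)$ according to the two cases of Theorem \ref{t.deflate} and bound each piece separately. Since the homological degree of an enhanced Kauffman state $(\sigma,\epsilon)$ is governed by the length of its associated Temperley--Lieb word $W$ (up to a shift that is constant across all states in $\CKh(\ftbraid_n^k)$), counting states at a fixed degree $h$ in $\FT^k_n$ reduces to counting words $W \in \TL_n$ of length $h$ that survive the whittling. By Theorem \ref{t.deflate}, each such $W$ falls into case (1) or case (2), so I would bound these two subsets separately and sum.

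For words $W$ falling under case (1), I would parametrize each such word by its block decomposition $e_{n-1}^{k_0}V_0\,e_{n-1}^{k_1}V_1\cdots V_{r-1}\,e_{n-1}^{k_r}$. Because each $V_j$ is by definition a consecutive subword $e_{j_1}e_{j_1+1}\cdots e_{n-1}$ terminating at $e_{n-1}$, it is uniquely determined by its length $|V_j|$. Thus the word is encoded by a tuple of positive integers summing to $h$, with the $k_j$ parts constrained to be at least $2$. Bounding the number of such tuples by the number of partitions of $h$ into $m$ parts, summed over $m \leq h$, yields the first summand $\sum_{m\leq h} p(h,m)$.

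For words $W$ falling under case (2), the reduction path to $\hat W = \JNF(W)$ uses only moves (D1), (D2), (D3). Since the normal forms form a basis of $\TL_n$ of cardinality $C_n$, there are at most $C_n$ possible targets $\hat W$. For each target, I would bound the number of length-$h$ preimages by reverse-engineering the reductions: (D1) reinserts an extra $e_{n-1}$, (D3) reinserts a length-two pattern $e_{i-1}e_i$ adjacent to an existing $e_i$, and (D2) only commutes far-apart generators and is length-preserving. I would then split the sum over $\hat W$ into two regimes. For normal forms $\hat W$ whose length is close to $h$, only a bounded number of (D1)/(D3) reinsertions are available, and the residual freedom coming from commuting generators via (D2) near the boundary of the word is captured uniformly by the factor $p(n,2)+2$; summing over all $C_n$ normal forms gives the term $(p(n,2)+2)C_n$. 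The contribution of shorter normal forms, which require many (D1) and (D3) insertions and whose preimage count depends on both $n$ and $h$, is collected into the auxiliary function $N(n,h)$.

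The main obstacle is the bookkeeping in case (2): the length-preserving move (D2) can be interleaved freely among the length-reducing moves, so naive counts overshoot by arbitrarily large factors. The delicate step is to show that, once we fix $\hat W$, the (D2) commutations acting on a length-$h$ word are confined to a region of size bounded in terms of $n$ alone (so that the overcount per normal form is the stated $p(n,2)+2$), with the residual $h$-dependent contribution only surviving for the shorter normal forms contributing to $N(n,h)$. Once this structural bound on reduction paths is established, assembling the three contributions gives the stated inequality.
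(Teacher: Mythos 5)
Your Case 1 count matches the paper's: parametrizing each block decomposition by the lengths of its pieces and bounding by ordered partitions gives $\sum_{m\leq h} p(h,m)$, and that part is fine. The gap is in Case 2. The paper does not count reduction paths at all: it invokes the structural consequence of the proof of Theorem \ref{t.deflate} (recorded as Lemma \ref{l.enumerate_whittled}) that any surviving Case-(2) state either has word of length exactly $h$ already in Jones Normal Form --- contributing $N(n,h)$ by Lemma \ref{l.nTL} --- or has reduced representative of the rigid form $e_{n-1}^{k_0}\,V\,e_{n-1}^{k_1}$ with $V\in\TL_{n-1}$ in Jones Normal Form, so that the only freedom beyond the $C_n$ choices of $V$ is the pair $(k_0,k_1)$, bounded by $p(n,2)+2$. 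You instead propose to fix a target $\hat W$ and count its length-$h$ preimages under reversed (D1)/(D3) insertions interleaved with (D2) commutations. You correctly identify that this naively overshoots and that one must confine the (D2) activity to a region whose size is bounded in terms of $n$ alone, but you leave exactly that step --- which is the entire content of the argument --- unproved. Without the structural form $e_{n-1}^{k_0}Ve_{n-1}^{k_1}$, there is no a priori control on how many length-$h$ words reduce to a given normal form.

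A second, related problem is your use of $N(n,h)$. In the paper, $N(n,h)$ is by definition the number of Jones Normal Form words of length exactly $h$, and it accounts for the surviving states whose word already has length $h$ and needs no further reduction. You instead assign $N(n,h)$ the role of absorbing the preimages of \emph{short} normal forms (those requiring many insertions), and assign $(p(n,2)+2)C_n$ to normal forms of length close to $h$; this swaps the roles of the two terms relative to the paper and, more importantly, gives no argument that the number of length-$h$ words reducing to short normal forms is bounded by the number of length-$h$ normal forms. As written, the three terms on the right-hand side are not actually shown to dominate the three pieces of your decomposition, so the inequality is not established.
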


In the statement of Theorem~\ref{t.count}, $p(n, k)$ stands for the number of ways one can write a positive integer $n$ as an ordered sum of $k$ positive integers, and $N(n, h)$ denotes the number of words in the Temperley-Lieb monoid $\TL_n$ in Jones Normal Form of a fixed length $h$. Finally, $C_n$ is the $n$th Catalan number.

Our approach shares similarities with that of recent work by Kelomäki \cite{kelomäki2024discretemorsetheorykhovanov, kelomäki2025morsematchingskhovanovhomology}, except that we did not know of the existence of discrete Morse theory since we started this project in 2020. We have independently developed many of the tools in discrete Morse theory for our purposes. As a result, the paper is self-contained. In discrete Morse theory, whether or not a set $\G$ of Gaussian eliminations can be applied to simplify the complex depends on the acyclicity of a directed graph $G$ constructed from the cube of resolutions. We derive this result and construct the graph $G$ in this paper. In addition to validating our algorithm, we prove the acyclicity  of $G$ for braids on all numbers of strands in Theorem \ref{t.G-acyclic}. 

A related work is the Gorsky-Oblomkov-Rasmussen conjecture \cite{GOR}, which predicts a similar reduced number (and form) of generators and differentials for $\Kh(\ftbraid_n^k)$. The GOR conjecture has been partially confirmed by Hogancamp \cite{Hogancamp2019} on the level of the generators of the chain complex. However, the differentials have not been determined explicitly. We believe our approach has the potential of making the differentials explicit for the whittled complex in view of the GOR conjecture, and we plan to investigate this, as well as the computational advantages of the whittled complex, in a future project. 

\subsection{Organization of the paper} \ \\ 
In Section~\ref{sec:background}, we briefly review general preliminaries on Khovanov homology, where we rely heavily on Bar–Natan’s formulation in~\cite{BN05}, and describe Gaussian elimination for chain complexes following Bar–Natan~\cite{bar2007fast} (see Lemma~\ref{l.geliminate}). We assume that the reader has some familiarity with Khovanov homology. In Section~\ref{ss.tlmjnf}, we define the Jones Normal Form of a braid word and prove a result of Jones~\cite{Jones-index} (Proposition~\ref{prop:monotone-path}), which states that every word in the Temperley-Lieb monoid $\TL_n$ is equivalent to a word in Jones Normal Form.

In Section \ref{s.Gaussian_Elimination}, we describe certain Gaussian elimination isomorphisms in the complex $\CKh(\ftbraid_n^k)$. In Section \ref{ss.algorithm} we then introduce the Algorithm \ref{ss.algorithm} that chooses, for an enhanced Kauffman state in $\CKh(\ftbraid_n^k)$, at most one Gaussian elimination isomorphism, which we refer to as  a \textit{distinguished Gaussian elimination isomorphism}. We prove the correctness of this algorithm in Lemma \ref{lem:unique-distinguished-isom}. The set $\G$ of all such distinguished Gaussian elimination isomorphisms is defined in Definition \ref{d.setGaussianG}. 

In Section \ref{ss.whittling_graph} we define the graph $G$ constructed from $\G$, and in Section \ref{ss.linear_order} we show that if $G$ is acyclic, then there exists an order on $\G$ which we can use to apply Gaussian eliminations along all the elements in $\G$, reducing $\CKh(\ftbraid_n^k)$ to $\FT_n^k$ (Definition \ref{d.whittled_complex}). The most technical part of the paper is Section \ref{ss.linear_order}, where we prove that $G$ is acyclic. 

In Section \ref{s.reducetoJNF} we combine results from previous sections to prove Theorem \ref{t.deflate}, and in Section \ref{s.enumerate} we use Theorem \ref{t.deflate} to prove Theorem \ref{t.count}. Motivating examples for Theorem \ref{t.deflate} are discussed in Section \ref{ss.motivatingegs}. 

\section{Background}
\label{sec:background}

\subsection{Kauffman states and enhanced Kauffman states}

\begin{definition}[Kauffman state] \label{d.Kauffman_state}
Given a tangle or a link diagram $D$ with crossings, a \textit{Kauffman state $\sigma$} on $D$ is a choice of the $0$-resolution or the $1$-resolution at every crossing of $D$, as shown in Figure~\ref{figure:resolutions}. \textit{Applying a Kauffman state to $D$} means to replace all crossings in $D$ by the state's choice of the $0$-resolution or the $1$-resolution at each crossing, which results in a disjoint collection of arcs and closed components without crossings, which we call a \textit{complete tangle resolution}.
\end{definition} 

\begin{figure}[h]
\centering
\begin{tikzpicture}[scale=.5]
\draw (-1,-1) -- (1,1);
\draw (-1,1) -- (-.15,.15);
\draw (.15,-.15) -- (1,-1);
\draw (-6.5,-2.75) node[above, align=center]{$0\text{-resolution}$};
\draw (6.5,-2.75) node[above, align=center]{$1\text{-resolution}$};
\draw (0,-2.75) node[above, align=center]{Crossing};
\draw[->, line width=0.6pt] (-2.3,0) -- (-4.2,0);
\draw[->, line width=0.6pt] (2.3,0) -- (4.2,0);
\foreach \ox/\cx in {-7.5/-6.5, -5.5/-6.5} \draw[rounded corners = 6mm] (\ox,1) -- (\cx,0) -- (\ox,-1);
\foreach \ox in {1,-1} \draw[rounded corners = 6mm] (5.5,\ox) -- (6.5,0) -- (7.5,\ox);
\foreach \x in {-6.5, 0, 6.5} \draw[dash pattern=on 10pt off 10pt] (\x,0) circle (1.7cm);
\end{tikzpicture}
\caption{The $0$-and $1$-resolution of a Kauffman state.}
\label{figure:resolutions}
\end{figure}

\begin{definition} \label{d.enhanced-K-state}
Let $T$ be a tangle with crossings, $\sigma$ a Kauffman state on $T$, and $T_{\sigma}$ be the complete resolution of $T$ by $\sigma$. An \textit{enhanced Kauffman state} is a pair $(\sigma, \epsilon)$, where $\epsilon: T_{\sigma}\to \{\pm \}$ is a marking of $+$ or $-$ on each closed component of  $T_\sigma$.
\end{definition}

\subsection{Khovanov homology and the Khovanov chain complex}
Let $L$ be a link diagram with $t$ crossings and number the crossings $\{1, 2, \ldots, t\}$. The application of a Kauffman state $\sigma$ on $L$ can be encoded by a binary string $s_b$ of length $t$, where the  $i$th entry is $0$ if the Kauffman state chooses the $0$-resolution at the $i$th crossing, and $1$ if it chooses the $1$-resolution. For an enhanced Kauffman state $(\sigma, \epsilon)$ in $\CKh(L)$, we define the homological degree $h$ and the quantum degree $q$. 
\begin{definition}[Homological and Quantum grading \cite{Turner-5lectures}]
Let $(\sigma, \epsilon)$ be an enhanced Kauffman state on a link $L \subset S^3$ with diagram $D$, with corresponding binary string $s_b$. Let $r_s$ be the number of $1$'s in $s_b$, and let $c_-(D)$ be the number of negative crossings in $D$, while $c_+(D)$ is the number of positive crossings of $D$. Let $\deg(\sigma, \epsilon)$ be the number of circles marked with $+$ minus the number of circles marked with $-$.  Then 
\begin{itemize}
\item The homological grading $h = h(\sigma, \epsilon)$ of $(\sigma, \epsilon)$ is 
\[ h = r_s - c_-.\]
\item The quantum grading of $q = q(\sigma, \epsilon)$ is 
\[ q = \deg(\sigma, \epsilon) + h + c_+(D) - c_-(D). \]
\end{itemize} 
\end{definition}

To an oriented link diagram $D$ with $t$ crossings, Khovanov associates a chain complex $C^{*, *}(D)$. Let $V = \mathbb{Z}\{v_-, v_+\}$ denote the $\mathbb{Z}$-module with basis $v_-$ and $v_+$, graded so that $\deg(v_-) = -1$ and $\deg(v_+) = 1$. For a Kauffman state $\sigma$, let $k_s$ denote the number of closed components in the complete resolution of $D$ obtained by applying $\sigma$ to $D$ to resolve all crossings. 
To an element $s_b$ of the set of $2^t$ resolutions of $D$, we associate the graded vector space 
\[ V_{s} := V^{\otimes k_s} \{r_s + c_+(D)-2n_-(D)\},\]
and define
\[ C^{h, *}(D) := \underset{s_b\in \{0, 1\}^n, r_s = h + c_-(D) }{\oplus}  V_s.\]

We now describe the differential $d$ that turns $\{C^{*, *}(D)\}$ into a chain complex.  Given two Kauffman states such that their associated binary strings $s$ and $s'$ are related by changing a single $0$ in $s_b$ to a $1$ in $s'_b$, while all other entries are the same, we define a map  $d_s: V_s \rightarrow V_{s'}$ as follows. Define \[ m: V\otimes V \rightarrow V \] by 
\[ m(v_+ \otimes v_+) = v_+, \qquad m(v_+\otimes v_-) = m(v_- \otimes v_+) = v_-, \qquad m(v_- \otimes v_-) = 0.\]
Moreover, define \[ \triangle: V \rightarrow V\otimes V\] by 
\[ \triangle(v_+) = v_+ \otimes v_- + v_- \otimes v_+, \qquad \triangle(v_-) = v_-\otimes v_-. \]
For $s$ and $s'$ related by changing $0$ in $s$ to $1$ in $s'$ (note that either a closed component is split into two or two closed components are merged into 1), we define the following, for $v \in V_s$:
\[ d_{s, s'}(v) = 
\begin{cases} 
m(v) &\text{if from $s$ to $s'$, a pair of closed components $C, C'$ in $D_\sigma$} \\ 
    &\text{are merged into a single component in $D_{\sigma'}$} \\ 
\triangle(v) &\text{if from $s$ to $s'$, a closed component $C\in D_\sigma$ is split into two}  \\
       & \text{closed components $C', C''$ in $D_{\sigma'}$} 
\end{cases} \]
The maps $m$ and $\triangle$ are extended over $V_{s}$ by operating only on the component $V\otimes V$ of the tensor product corresponding to $C$ and $C'$ in $D_\sigma$, and respectively on the component $V$ corresponding to $C$ in $D_{\sigma}$. 

Finally for $s$ and $s'$ related by a single change of resolution from $0$ to $1$, where $s'$ is obtained from $s$ by changing a $0\in s$ to $1$, let $\star$ denote the digit in $s$ which is changed. Reading a binary string from left to right, let $\sgn(s, s')$ be 
\[ \sgn(s, s') = (-1)^{\text{\# of $1$'s to the left of $\star$ in $s$}}. \] 
Then, for $v\in C^{h, *}(D)$, the differential $d^h$ at homological grading $h$ is defined as 
\[ d^h (v)  = \underset{s\in C^{h, *}(D), \ \text{$s' \in C^{h, *}$ related to $s$}}{\sum} \sgn(s, s') d_{s, s'}(v). \]

\subsection{The Khovanov complex for tangles}

Bar-Natan extended the Khovanov homology to tangles in~\cite{BN05}, where he associated to a tangle a formal chain complex whose objects are formal direct sums of graded tangle resolutions  and whose morphisms are matrices of cobordisms between tangle resolutions.  Without passing to $\mathbb{Z}$-modules, the homotopy type of Bar-Natan's formal chain complex associated to a tangle $T$ is invariant under the Reidemeister moves  \cite[Theorem 1]{BN05}; thus it is an invariant of tangles.
We remark that Bar-Natan's construction applies to links as well, by regarding a link as a tangle with no boundary points. We assume the reader's familiarity with the paper~\cite{BN05}, and we only briefly describe below the settings from~\cite{BN05} that are most relevant to our work in this paper.

The category $\Cob^3(\emptyset)$ is the pre-additive category whose objects are complete tangle resolutions and whose morphisms are cobordisms between such resolutions (smoothings), considered up to boundary-preserving isotopies. If $B$ is a finite set of points on a circle, such as the boundary $\partial T$ of a tangle $T$, then $\Cob^3(B)$ is the category whose objects are (boundary-preserving) isotopy classes of complete tangle resolutions with boundary $B$ and whose morphisms are cobordisms between such objects. We follow Bar-Natan's lead in designating $\Cob^3$ as either $\Cob^3(\emptyset)$ and $\Cob^3(B)$ to reduce unnecessary notation. 

The pre-additive category $\Mat(\Cob^3)$ is constructed from $\Cob^3$ as follows: 
\begin{itemize}
    \item \textbf{Objects}: Possibly empty formal direct sums $\bigoplus_{i=1}^n \mathcal{O}_i$ of objects $\mathcal{O}_i$ of $\Cob^3$. 
    \item If $\mathcal{O} = \bigoplus_{i=1}^n \mathcal{O}_i$ and $\mathcal{O'} = \bigoplus_{j=1}^m \mathcal{O}_j'$, then a morphism $F: \mathcal{O'}\rightarrow \mathcal{O}$ in $\Mat(\Cob^3)$ is an $m\times n$ matrix $F = (F_{ij})$ of morphisms $F_{ij}: \mathcal{O}_j' \rightarrow \mathcal{O}_i$.
    \item \textbf{Morphisms} in $\Mat(\Cob^3)$ are added using matrix addition. 
    \item Compositions of morphisms in $\Mat(\Cob^3)$ are defined by a rule modeled on matrix multiplication: 
    \[ ((F_{ij})\circ (G_{jk}))_{ik}:= \sum_j F_{ij} \circ G_{jk}.\]
\end{itemize}

We quotient the morphisms of the category $\Cob^3$ by the local relations illustrated in Figure \ref{f.bn_relations}, and call the resulting quotient $\Cob^3_{/l}$. 

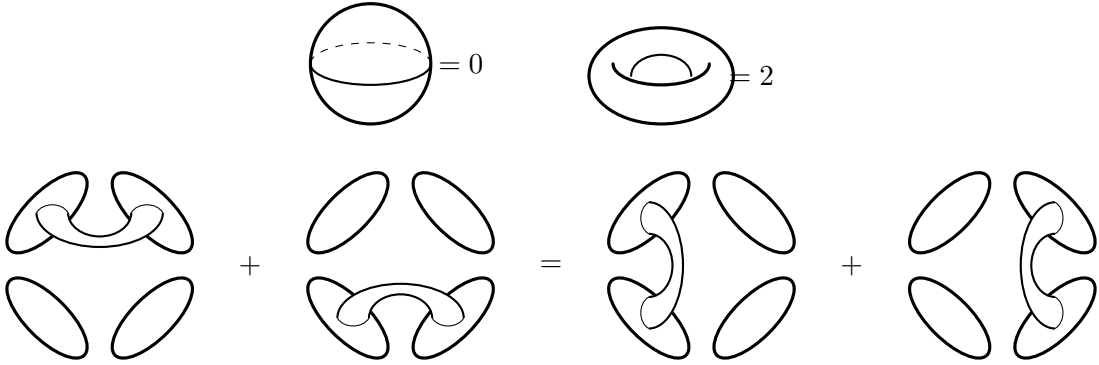
\begin{figure}[H] 
\begin{tikzpicture}[scale=0.8]
\draw[very thick] (0,0) circle (1);
\draw[thick] (-1,0) arc (180:360:1 and 0.35);
\draw[dashed] (1,0) arc (0:180:1 and 0.35);
\node at (1.5,0) {$=0$};
\end{tikzpicture} \hspace{1cm}
\begin{tikzpicture}[scale=0.8, line cap=round, line join=round]
\draw[very thick] (0,0) ellipse (1.2 and 0.8);
\draw[very thick] (-.8,.2) arc (180:360:0.8 and 0.35);
\draw[thick] (0.5,0) arc (0:180:0.5 and 0.35);
\node at (1.5,0) {$=2$};
\end{tikzpicture}
\vspace{0.5cm}

\begin{tikzpicture}
    \begin{scope}[scale=0.7]
   \draw[very thick, shift={(-1,1)}, rotate=45] (0,0) ellipse (1 and 0.4);
   \draw[very thick, shift={(1,-1)}, rotate=45] (0,0) ellipse (1 and 0.4);
  \draw[very thick, shift={(-1,-1)}, rotate=-45] (0,0) ellipse (1 and 0.4);
  \draw[very thick, shift={(1,1)}, rotate=-45] (0,0) ellipse (1 and 0.4);
\fill[white]
  (-0.6,1)
  arc (180:360:0.6 and 0.45)
  --
  (0.6,1)
  to[out=60, in=120] (1.2,1)
  --
  (1.2,1)
  arc (0:-180:1.2 and 0.65)
  --
  (-1.2,1)
  to[out=60, in=120] (-0.6,1)
  -- cycle;

  \draw[thick] (-0.6,1) arc (180:360:0.6 and 0.45);
 \draw[thick] (-1.2,1) arc (180:360:1.2 and 0.65); 
 \draw (-1.2,1) to[out=60, in=120] (-0.6,1);
 \draw (0.6,1) to[out=60, in=120] (1.2,1); 
    \end{scope}
    \node at (2,0) { $+$};
     \begin{scope}[shift={(4,0)},scale=0.7,rotate=180]  
    \draw[very thick, shift={(-1,1)}, rotate=45] (0,0) ellipse (1 and 0.4);
     \draw[very thick, shift={(1,-1)}, rotate=45] (0,0) ellipse (1 and 0.4);
    \draw[very thick, shift={(-1,-1)}, rotate=-45] (0,0) ellipse (1 and 0.4);
    \draw[very thick, shift={(1,1)}, rotate=-45] (0,0) ellipse (1 and 0.4);
    \fill[white]
  (-0.6,1)
  arc (180:360:0.6 and 0.45)
  --
  (0.6,1)
  to[out=60, in=120] (1.2,1)
  --
  (1.2,1)
  arc (0:-180:1.2 and 0.65)
  --
  (-1.2,1)
  to[out=60, in=120] (-0.6,1)
  -- cycle;
 \draw[thick] (-0.6,1) arc (180:360:0.6 and 0.45);
 \draw[thick] (-1.2,1) arc (180:360:1.2 and 0.65);
 \draw (-1.2,1) to[out=60, in=120] (-0.6,1);
 \draw (0.6,1) to[out=60, in=120] (1.2,1); 
    \end{scope}
 \node at (6,0) { $=$};
\begin{scope}[shift={(8,0)},scale=0.7,rotate=90]  
\draw[very thick, shift={(-1,1)}, rotate=45] (0,0) ellipse (1 and 0.4);
\draw[very thick, shift={(1,-1)}, rotate=45] (0,0) ellipse (1 and 0.4);
\draw[very thick, shift={(-1,-1)}, rotate=-45] (0,0) ellipse (1 and 0.4);
\draw[very thick, shift={(1,1)}, rotate=-45] (0,0) ellipse (1 and 0.4);

    \fill[white]
  (-0.6,1)
  arc (180:360:0.6 and 0.45)
  --
  (0.6,1)
  to[out=60, in=120] (1.2,1)
  --
  (1.2,1)
  arc (0:-180:1.2 and 0.65)
  --
  (-1.2,1)
  to[out=60, in=120] (-0.6,1)
  -- cycle;
 \draw[thick] (-0.6,1) arc (180:360:0.6 and 0.45);
 \draw[thick] (-1.2,1) arc (180:360:1.2 and 0.65);
 \draw (-1.2,1) to[out=60, in=120] (-0.6,1);
 \draw (0.6,1) to[out=60, in=120] (1.2,1); 
    \end{scope} 
    \node at (10,0) { $+$};
        \begin{scope}[shift={(12,0)},scale=0.7,rotate=270]
\draw[very thick, shift={(-1,1)}, rotate=45] (0,0) ellipse (1 and 0.4);
\draw[very thick, shift={(1,-1)}, rotate=45] (0,0) ellipse (1 and 0.4);
\draw[very thick, shift={(-1,-1)}, rotate=-45] (0,0) ellipse (1 and 0.4);
\draw[very thick, shift={(1,1)}, rotate=-45] (0,0) ellipse (1 and 0.4);
    \fill[white]
  (-0.6,1)
  arc (180:360:0.6 and 0.45)
  --
  (0.6,1)
  to[out=60, in=120] (1.2,1)
  --
  (1.2,1)
  arc (0:-180:1.2 and 0.65)
  --
  (-1.2,1)
  to[out=60, in=120] (-0.6,1)
  -- cycle;
 \draw[thick] (-0.6,1) arc (180:360:0.6 and 0.45);
 \draw[thick] (-1.2,1) arc (180:360:1.2 and 0.65);
 \draw (-1.2,1) to[out=60, in=120] (-0.6,1);
 \draw (0.6,1) to[out=60, in=120] (1.2,1); 
    \end{scope}
\end{tikzpicture}

\caption{ Bar-Natan relations.}\label{f.bn_relations}
\end{figure}

Let $Kob$ be the category of chain complexes over $\Mat(\Cob^3_{/l})$, whose objects are chains of finite length
\[ \cdots \longrightarrow \Omega^{r-1} \stackrel{d_{r-1}}{\longrightarrow}\Omega^r \stackrel{d_r}{\longrightarrow} \Omega^{r+1} \longrightarrow  \cdots, \]
for which the composition $d_r\circ d_{r-1}$ is 0 for all $r$. 

Given a tangle $T$, Bar-Natan's formal chain complex $[T]$, when regarded as an object in $Kob$, is an up-to-homotopy invariant of 
$T$ (see~\cite[Theorem 1]{BN05}). 

For the scope of this paper, it is convenient to work with the category $Cob^3_{\bullet/l'}$, where $Cob^3_{\bullet}$ has the same objects as $Cob^3$, but now cobordisms are allowed to have dots, where a dot is associated a formal degree of $-2$. The category $Cob^3_{\bullet/l'}$ is then obtained by reducing the category $Cob^3_{\bullet}$ modulo local relations involving dotted cobordisms. The new relations $l'$ are the initial relations $l$, now rewritten in terms of cobordisms with dots. For details, we refer the reader to~\cite[Section 11.2]{BN05}.

\subsection{Abstract Gaussian elimination for chain complexes}

\begin{lemma}[Gaussian elimination \cite{bar2007fast}] \label{l.geliminate}
If $\phi: b_1 \rightarrow b_2$ is an isomorphism, then the four term complex segment 
\[ \cdots \rightarrow [C]  \stackrel{\left( \begin{array}{c} \alpha \\ \beta \end{array} \right)}{\rightarrow} \left[  \begin{array}{c} b_1 \\ D \end{array} \right]   \stackrel{\left( \begin{array}{cc} \ \phi & \delta \\ \gamma & \epsilon \end{array} \right) }{\rightarrow} \left[  \begin{array}{c} b_2 \\ E \end{array} \right]  \stackrel{\left( \begin{array}{cc} \mu & v\end{array}\right)}{\rightarrow}  [F] \rightarrow \cdots   \] 
is homotopy equivalent to the complex segment 
\[ \cdots \rightarrow  [C] \stackrel{(\beta)}{\rightarrow}  [D]   \stackrel{(\epsilon-\gamma\phi^{-1}\delta)}{\rightarrow} [E]  \stackrel{(\nu)}{\rightarrow} [F] \rightarrow \cdots   \] 
\end{lemma}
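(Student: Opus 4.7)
The plan is to establish the homotopy equivalence concretely by exhibiting chain maps $f$ from the original four-term segment to the simplified one and $g$ in the reverse direction, and then producing an explicit chain homotopy realizing $g\circ f\simeq \Id$ while showing that $f\circ g$ is the identity on the nose. Since everything takes place in an additive category of chain complexes (here over $\Mat(\Cob^3_{\bullet/l'})$), each step reduces to block matrix arithmetic, and the only nontrivial input is the invertibility of $\phi$.

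The first step is to unpack the consequences of $d^2=0$ in the original complex. The relation
\[
\begin{pmatrix}\phi & \delta\\ \gamma & \epsilon\end{pmatrix}\begin{pmatrix}\alpha\\ \beta\end{pmatrix}=0
\]
gives $\phi\alpha+\delta\beta=0$, whence $\alpha=-\phi^{-1}\delta\beta$; dually, $(\mu\ \nu)\bigl(\begin{smallmatrix}\phi & \delta\\ \gamma & \epsilon\end{smallmatrix}\bigr)=0$ gives $\mu=-\nu\gamma\phi^{-1}$. These two identities are the only places where invertibility of $\phi$ is used, and as a bonus they immediately show that the simplified sequence is itself a chain complex: both $(\epsilon-\gamma\phi^{-1}\delta)\circ\beta$ and $\nu\circ(\epsilon-\gamma\phi^{-1}\delta)$ vanish after a one-line substitution using the accompanying identities $\gamma\alpha+\epsilon\beta=0$ and $\mu\delta+\nu\epsilon=0$.

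Next I would define the chain maps. For $f$ from the original complex to the simplified one, take the identity on $[C]$ and $[F]$, the projection $(0\ \Id)$ on $\bigl[\begin{smallmatrix}b_1\\ D\end{smallmatrix}\bigr]$, and $(-\gamma\phi^{-1}\ \Id)$ on $\bigl[\begin{smallmatrix}b_2\\ E\end{smallmatrix}\bigr]$. For $g$ in the reverse direction, take the identity on $[C]$ and $[F]$, the column $\bigl(\begin{smallmatrix}-\phi^{-1}\delta\\ \Id\end{smallmatrix}\bigr)$ on $[D]$, and the inclusion $\bigl(\begin{smallmatrix}0\\ \Id\end{smallmatrix}\bigr)$ on $[E]$. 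Verifying that each commutes with the differentials reduces to a handful of $2\times 2$ block computations, each of which invokes either $\alpha=-\phi^{-1}\delta\beta$ or $\mu=-\nu\gamma\phi^{-1}$ at exactly one position.

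Finally, $f\circ g=\Id$ at every position of the simplified complex follows by direct multiplication. The composition $g\circ f$ agrees with the identity on $[C]$ and $[F]$ but differs on the middle positions, by the block matrices $\bigl(\begin{smallmatrix}-\Id & -\phi^{-1}\delta\\ 0 & 0\end{smallmatrix}\bigr)$ and $\bigl(\begin{smallmatrix}-\Id & 0\\ -\gamma\phi^{-1} & 0\end{smallmatrix}\bigr)$ respectively. Both discrepancies are absorbed by the single-degree homotopy $h\colon \bigl[\begin{smallmatrix}b_2\\ E\end{smallmatrix}\bigr]\to \bigl[\begin{smallmatrix}b_1\\ D\end{smallmatrix}\bigr]$ defined by $\bigl(\begin{smallmatrix}-\phi^{-1} & 0\\ 0 & 0\end{smallmatrix}\bigr)$, with all other components zero; the identity $dh+hd=g\circ f-\Id$ then drops out of two more matrix products. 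The main obstacle is not conceptual but notational: the sign in $h$ is forced by the sign in $\phi\alpha+\delta\beta=0$, and one has to keep the block positions and compositions straight throughout.
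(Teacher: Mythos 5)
Your proof is correct. The paper itself offers no proof of this lemma --- it is quoted verbatim from Bar--Natan's \cite{bar2007fast} as a known result --- but your argument is precisely the standard one given there: the chain maps $f=(0\ \Id)$, $(-\gamma\phi^{-1}\ \Id)$ and $g=\bigl(\begin{smallmatrix}-\phi^{-1}\delta\\ \Id\end{smallmatrix}\bigr)$, $\bigl(\begin{smallmatrix}0\\ \Id\end{smallmatrix}\bigr)$ together with the homotopy $\bigl(\begin{smallmatrix}-\phi^{-1} & 0\\ 0 & 0\end{smallmatrix}\bigr)$ are exactly the data exhibited in the cited reference, and all of your block computations (including the two consequences of $d^2=0$ and the verification of $dh+hd=g\circ f-\Id$) check out.
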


Let $\alpha$ be a component of the differential in a chain complex and suppose that $\alpha$ is an isomorphism. By \textit{applying Gaussian elimination along $\alpha$} we will mean creating a new complex by replacing the component corresponding to $\alpha$ using Lemma \ref{l.geliminate}. 

Delooping a closed component of a complete resolution $T_{\sigma}$, we obtain two copies of $T_{\sigma}$ in the complex graded by $q$ and  $q^{-1}$. The following lemma (with slightly different notation) is well-known \cite{bar2007fast}. 

\begin{lemma}[Delooping] \label{l.deloop}
Refer to Figure \ref{fig:BN-deloop-square}.
If an object $T$ in $Cob^3_{\bullet/l'}$ is such that $T = T' \cup \ell$, where $\ell$ is a closed loop, then $T$ is isomorphic to the direct sum of two copies $qT'$ and $q^{-1}T'$ of $T'$, one taken with a degree shift of $+1$ and one with degree shift of $-1$. 
\end{lemma}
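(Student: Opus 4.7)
The plan is to exhibit explicit mutually-inverse morphisms in $\Mat(Cob^3_{\bullet/l'})$ between $T = T' \cup \ell$ and $qT' \oplus q^{-1}T'$, built from dotted and undotted cap/cup cobordisms localized in a neighborhood of the loop $\ell$. I would define
$$\alpha = \begin{pmatrix} \alpha_+ \\ \alpha_- \end{pmatrix}\colon T \longrightarrow qT' \oplus q^{-1}T', \qquad \beta = \begin{pmatrix} \beta_+ & \beta_- \end{pmatrix}\colon qT' \oplus q^{-1}T' \longrightarrow T,$$
where each component acts as the identity cylinder on $T'$ outside a small neighborhood of $\ell$ and, on that neighborhood, is: $\alpha_+$ a once-dotted disk capping off $\ell$; $\alpha_-$ an undotted disk capping off $\ell$; $\beta_+$ an undotted disk creating $\ell$; and $\beta_-$ a once-dotted disk creating $\ell$. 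A routine degree count, using that a disk contributes $\pm 1$ to the degree and a dot contributes $-2$, shows that these four morphisms are homogeneous of degree zero once the grading shifts $q$ and $q^{-1}$ on the two copies of $T'$ are taken into account.

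The two required identities $\beta \circ \alpha = \Id_T$ and $\alpha \circ \beta = \Id_{qT' \oplus q^{-1}T'}$ are then purely local computations on the $\ell$ region, tensored with the identity on $T'$. For $\alpha\circ\beta$, each of the four matrix entries is the identity on $T'$ together with a closed sphere on the $\ell$ region carrying $0$, $1$, or $2$ dots. The sphere-evaluation relations recorded in Figure \ref{f.bn_relations}---undotted sphere equals $0$, once-dotted sphere equals $1$, together with the twice-dotted sphere vanishing in $Cob^3_{\bullet/l'}$---reduce this $2 \times 2$ matrix to the identity. For $\beta\circ\alpha$, the sum $\beta_+\alpha_+ + \beta_-\alpha_-$ is, on the $\ell$ region, exactly the right-hand side of the neck-cutting relation (the rightmost relation in Figure \ref{f.bn_relations}): a sum of two cap-cup configurations, each with a single dot on one side. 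The relation identifies this sum with the identity cylinder on $\ell$, giving $\beta\circ\alpha = \Id_T$.

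The lemma is standard, so I do not expect a serious obstacle. The only real subtlety is arranging the dotted/undotted pairing of the four components so that no minus signs are needed and the off-diagonal entries of $\alpha\circ\beta$ vanish automatically, which the assignment above accomplishes. The degree bookkeeping is forced: the shifts $q^{\pm 1}$ are exactly what is required to make the dotted and undotted caps (respectively cups) into degree-zero maps, and in particular this is why precisely these two shifted copies of $T'$ appear in the decomposition.
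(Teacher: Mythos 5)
Your proposal is correct and is exactly the argument the paper has in mind: the four maps you define (dotted/undotted caps to $qT'$ and $q^{-1}T'$, undotted/dotted cups back) are precisely the arrows drawn in Figure \ref{fig:BN-deloop-square}, and the paper simply cites \cite{bar2007fast} rather than writing out the sphere-evaluation and neck-cutting verifications you supply. The only cosmetic point is that the dotted relations you invoke (once-dotted sphere $=1$, two dots $=0$) live in the relations $l'$ of $Cob^3_{\bullet/l'}$ described via \cite[Section 11.2]{BN05} rather than literally in Figure \ref{f.bn_relations}, which shows the undotted versions.
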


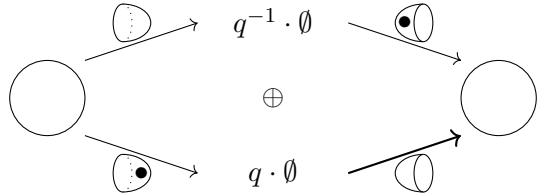
\begin{figure}[H]
    \centering
 \begin{tikzpicture}[scale=.5]
\draw (0,0) circle (1cm);
\draw[->] (1,1) -- (4,2);
\begin{scope}[xshift=2cm, yshift=2cm, scale=.5]
    \kap
\end{scope}
\begin{scope}[yscale=-1]
    \draw[->] (1,1) -- (4,2);
    \begin{scope}[xshift=2cm, yshift=2cm, scale=.5]
        \kapdot
    \end{scope}
\end{scope}
\node (empty1) at (6,2) {$q^{-1} \cdot \emptyset$};
\node (empty2) at (6,-2) {$q \cdot \emptyset$};
\node (oplus) at (6,0) {$\oplus$};
\draw[->] (8,2) -- (11,1);
\begin{scope}[xshift=10cm, yshift=2cm, scale=.5]
    \kupdot
\end{scope}
\begin{scope}[yscale=-1]
    \draw[thick, ->] (8,2) -- (11,1);
    \begin{scope}[xshift=10cm, yshift=2cm, scale=.5]
        \kup
    \end{scope}
\end{scope}
\draw (12,0) circle (1cm);
\end{tikzpicture}
    \caption{Delooping diagram from \cite{bar2007fast}.}
    \label{fig:BN-deloop-square}

\end{figure}

We denote the map in Lemma \ref{l.deloop} from an object $T$ containing a closed loop to a copy $q^{\pm 1}T'$ from delooping that closed loop by $D_{\pm}$. The corresponding maps from $q^{\pm 1}T'$ to $T$ are denoted by $L_{\pm}$. 
\begin{lemma} \label{l.mergesplit}
Suppose we have a map $m:T_1 \cup \ell \rightarrow T_2$, where $\ell$ is a closed loop and $m$ merges the closed loop with a saddle. Then the composition $m \circ L_+: qT_1 \rightarrow T_2$ is the identity map. Similarly, suppose we have a map $\Delta: T_1 \rightarrow T_2 \cup \ell$, where $\ell$ is a closed loop and $\Delta$ splits an arc in $T_1$ to produce the closed loop $\ell$. Then the composition $D_-\circ \Delta: T_1 \to q^{-1}T_2$ is the identity map. See Figure \ref{f.splitiso}, where the components of the isomorphisms are dashed. \\

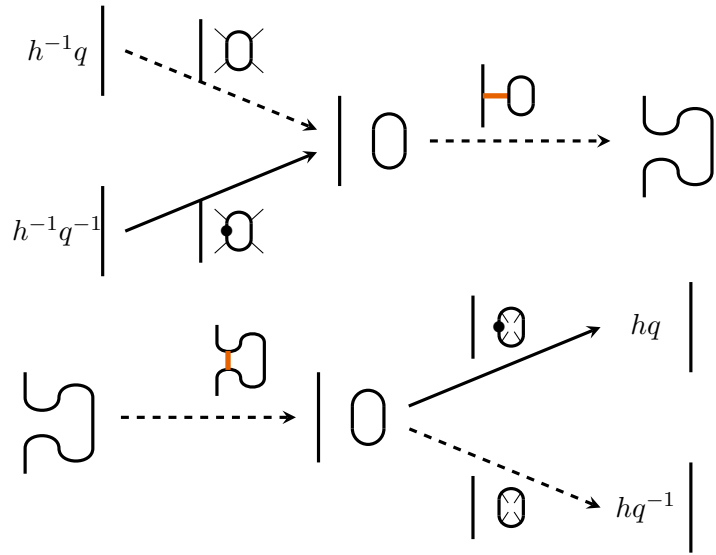
\begin{figure}[H]
 \begin{tikzpicture}[scale=.6]
\node (A) at (-6,2){};
 \node (B) at (-6,-2){};
 \node (C) at (0,0){};
 \node (D) at (6,-0){};  

 \begin{scope}[shift={(A)}]
 \node at (-1,0){ $h^{-1}q$};
 \draw[very thick] (0,-1) to (0,1);
 \end{scope}

\begin{scope}[shift={(-3.25,2)}, scale=.7,xscale=1.1]
 \draw[ very thick] (-.75,-1) to (-.75,1);
\draw [very thick](0,-0.25) arc (180:360:0.35cm);
   \draw[very thick] (0,-.25) to (0,.25); 
      \draw[very thick] (.7,-.25) to (.7,.25); 
    \draw [very thick](0,0.25) arc (180:0:0.35cm);
     %
    \draw (-0.35,-.7) -- (0,-0.35);
     \draw (-0.35,0.7) -- (0,0.35);
      \draw (1.05,-.7) -- (0.7,-0.35);
     \draw (1.05,.7) -- (0.7,0.35);
\end{scope} 
 \draw[-stealth, very thick, dashed] (-5.5,2) to (-1.25,0.25);
 
 \begin{scope}[shift={(B)}]
  \node at (-1,0){ $h^{-1}q^{-1}$};
 \draw[ very thick] (0,-1) to (0,1);
 \end{scope}
 
 \begin{scope}[shift={(-3.25,-2)}, scale=.7,xscale=1.1]
 \draw[ very thick] (-.75,-1) to (-.75,1);
\draw [very thick](0,-0.25) arc (180:360:0.35cm);
   \draw[very thick] (0,-.25) to (0,.25); 
      \draw[very thick] (.7,-.25) to (.7,.25); 
    \draw [very thick](0,0.25) arc (180:0:0.35cm);
    \node at (0,0)[scale=1] {$\bullet$};
    \draw (-0.35,-.7) -- (0,-0.35);
     \draw (-0.35,0.7) -- (0,0.35);
      \draw (1.05,-.7) -- (0.7,-0.35);
     \draw (1.05,.7) -- (0.7,0.35);
\end{scope} 
  \draw[-stealth, very thick] (-5.5,-2) to (-1.25,-.25);
 
\begin{scope}[shift={(C)}]
 \draw[ very thick] (-.75,-1) to (-.75,1);
\draw [very thick](0,-0.25) arc (180:360:0.35cm);
   \draw[very thick] (0,-.25) to (0,.25); 
      \draw[very thick] (.7,-.25) to (.7,.25); 
    \draw [very thick](0,0.25) arc (180:0:0.35cm);
\end{scope} 

\begin{scope}[shift={(3,1)}, scale=.7, xscale=1.1]
 \draw[ very thick] (-.75,-1) to (-.75,1);
 \draw[line width=2pt, Orange]  (-.75,0) to (0,0);
\draw [very thick](0,-0.25) arc (180:360:0.35cm);
   \draw[very thick] (0,-.25) to (0,.25); 
      \draw[very thick] (.7,-.25) to (.7,.25); 
    \draw [very thick](0,0.25) arc (180:0:0.35cm);
\end{scope} 

  \draw[-stealth, very thick, dashed] (1.25,0) to (5.25,0);
  
\begin{scope}[shift={(D)}, shift={(0,-.5)}, yscale=.75]
  \draw[very thick] (0,-1)  to (0,-.25) .. controls ++(0,.5) and ++(0,.5) .. (.75,-.25).. controls ++(0,-.5) and ++(0,-.5)..(1.5,-.25) to (1.5,.5)to (1.5,1.25).. controls ++(0,.5) and ++(0,.5)..(.75,1.25) .. controls ++(0,-.5) and ++(0,-.5) .. (0,1.25) to (0,2);
  \end{scope}
 
\end{tikzpicture}

\begin{tikzpicture}[xscale=-1, scale=.6]
\node (A) at (-7.5,2){};
 \node (B) at (-7.5,-2){};
 \node (C) at (0,0){};
 \node (D) at (7.25,-0){};  
 
 \begin{scope}[shift={(A)},xscale=-1]
 \node at (-1,0){ $hq$};
 \draw[ very thick] (0,-1) to (0,1);
 \end{scope}

\begin{scope}[shift={(-3.25,2)}, scale=.7,xscale=-1.1]
 \draw[ very thick] (-.75,-1) to (-.75,1);
\draw [very thick](0,-0.25) arc (180:360:0.35cm);
   \draw[very thick] (0,-.25) to (0,.25); 
      \draw[very thick] (.7,-.25) to (.7,.25); 
    \draw [very thick](0,0.25) arc (180:0:0.35cm);
 \node at (0,0)[scale=1] {$\bullet$};
    \draw (0.65,-0.5) --(0.35,0) -- (0.05,-0.5);
     \draw (0.65,0.5)--(0.35,0) -- (0.05,0.5);
      \draw [fill=white, white](0.5,0) arc (0:360:0.15cm);
\end{scope} 
 \draw[stealth-, very thick] (-5.5,2) to (-1.25,0.25);
 \begin{scope}[shift={(B)},xscale=-1]
  \node at (-1,0){ $hq^{-1}$};
 \draw[ very thick] (0,-1) to (0,1);
 \end{scope}

 \begin{scope}[shift={(-3.25,-2)}, scale=.7,xscale=-1.1]
 \draw[ very thick] (-.75,-1) to (-.75,1);
\draw [very thick](0,-0.25) arc (180:360:0.35cm);
   \draw[very thick] (0,-.25) to (0,.25); 
      \draw[very thick] (.7,-.25) to (.7,.25); 
    \draw [very thick](0,0.25) arc (180:0:0.35cm);
 \draw (0.65,-0.5) --(0.35,0) -- (0.05,-0.5);
     \draw (0.65,0.5)--(0.35,0) -- (0.05,0.5);
      \draw [fill=white, white](0.5,0) arc (0:360:0.15cm);
\end{scope} 

  \draw[stealth-, very thick, dashed] (-5.5,-2) to (-1.25,-.25);
 
\begin{scope}[shift={(C)},xscale=-1]
 \draw[ very thick] (-.75,-1) to (-.75,1);
\draw [very thick](0,-0.25) arc (180:360:0.35cm);
   \draw[very thick] (0,-.25) to (0,.25); 
      \draw[very thick] (.7,-.25) to (.7,.25); 
    \draw [very thick](0,0.25) arc (180:0:0.35cm);
\end{scope}

\begin{scope}[shift={(3,1)}, scale=.7,xscale=-1,yscale=.75]
 \draw[very thick] (0,-1)  to (0,-.25) .. controls ++(0,.5) and ++(0,.5) .. (.75,-.25).. controls ++(0,-.5) and ++(0,-.5)..(1.5,-.25) to (1.5,.5)to (1.5,1.25).. controls ++(0,.5) and ++(0,.5)..(.75,1.25) .. controls ++(0,-.5) and ++(0,-.5) .. (0,1.25) to (0,2);
 \draw [line width=2pt, Orange] (0.35,0.1)--(0.35,.9);
\end{scope} 

  \draw[stealth-, very thick, dashed] (1.25,0) to (5.25,0);
  
\begin{scope}[shift={(D)}, shift={(0,-.5)}, yscale=.75,xscale=-1]
  \draw[very thick] (0,-1)  to (0,-.25) .. controls ++(0,.5) and ++(0,.5) .. (.75,-.25).. controls ++(0,-.5) and ++(0,-.5)..(1.5,-.25) to (1.5,.5)to (1.5,1.25).. controls ++(0,.5) and ++(0,.5)..(.75,1.25) .. controls ++(0,-.5) and ++(0,-.5) .. (0,1.25) to (0,2);
  \end{scope}
 
\end{tikzpicture}

\caption{Merging and splitting a circle.}\label{f.splitiso} 
\end{figure}

We note that multiplication by  $q^{\pm 1}$ changes the quantum grading by $\pm 1$. 
\end{lemma}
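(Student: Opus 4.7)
The plan is to verify each claim by an explicit cobordism calculation, using the specific form of the delooping isomorphism given in Lemma~\ref{l.deloop} (illustrated by Figure~\ref{fig:BN-deloop-square}) together with the Bar-Natan local relations of Figure~\ref{f.bn_relations}. Reading the delooping square, the component $L_+:qT_1 \to T_1\cup\ell$ is realized by the identity cobordism on $T_1$ disjoint-union with a dotted cup that ``births'' the loop $\ell$ from the empty $1$-manifold, while $D_-:T_2\cup\ell \to q^{-1}T_2$ is the identity on $T_2$ disjoint-union with an undotted cap that ``kills'' $\ell$. The quantum shifts $q$ and $q^{-1}$ are forced on us precisely so that these birth/death cobordisms land in the stated degrees.

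For the merge claim we compute $m\circ L_+$ as a composition of cobordisms. Away from a neighborhood of $\ell$ both $L_+$ and $m$ are identity, so the composition is the identity there as well. Inside the neighborhood one has a dotted birth disk immediately followed by the saddle $m$ which joins $\ell$ to an adjacent arc of $T_1$. An ambient isotopy lets one slide the dot across the saddle and cancel the birth against the merge, leaving behind a plain cylinder on the arc in question. Globally the composite is the identity cobordism from $T_1$ to $T_2$, which is precisely the identity map $qT_1\to T_2$ once the quantum shift is accounted for. This is exactly the configuration depicted in the upper diagram of Figure~\ref{f.splitiso}, where the dashed arrows compose to the identity cylinder on the right-hand diagram.

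For the split claim we compute $D_-\circ\Delta$ in the same style. Here $\Delta$ is a saddle splitting an arc of $T_1$ into an arc and a fresh closed loop $\ell$, and $D_-$ then caps off $\ell$ with an undotted disk. Away from $\ell$ everything is the identity, and in the neighborhood of $\ell$ the local piece ``splitting saddle followed by undotted cap'' straightens by isotopy to an undotted cylinder on the original arc. Therefore $D_-\circ\Delta$ is the identity cobordism from $T_1$ to $T_2$, which as a morphism has source $T_1$ and target $q^{-1}T_2$ after the grading shift; this is the content of the lower diagram of Figure~\ref{f.splitiso}.

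The only delicate point, and thus the expected obstacle, is bookkeeping: one must verify that the dot appears on the birth disk (and is \emph{absent} from the death cap) rather than the other way around, since the opposite choice would produce a \emph{dotted} cylinder in the composition, which is not the identity morphism. This matching of dot placements with the grading shifts $q^{\pm 1}$ is exactly how the delooping maps $L_\pm$ and $D_\pm$ are defined in Lemma~\ref{l.deloop}, so the calculation proceeds without further obstruction. No further use of the $S=0$ or neck-cutting relations from Figure~\ref{f.bn_relations} is required beyond the isotopy that straightens the local dotted-cup-plus-saddle and saddle-plus-cap pictures to cylinders.
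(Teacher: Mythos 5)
Your overall strategy---compute the composite cobordisms explicitly and observe that each straightens to a cylinder---is exactly the intended argument (the paper itself only asserts the result via Figure~\ref{f.splitiso}). However, your dot bookkeeping for $L_+$ is backwards, and the step you use to repair it is not a valid relation. In the delooping of Lemma~\ref{l.deloop} (see Figure~\ref{fig:BN-deloop-square}), the map $L_+\colon q T' \to T$ is the \emph{undotted} cup, while $L_-$ is the dotted cup; dually $D_+$ is the dotted cap and $D_-$ the undotted cap. This assignment is forced: $D_+\circ L_+$ must be the identity on $q\emptyset$, i.e.\ a once-dotted sphere $=1$, and $D_-\circ L_+$ must vanish, i.e.\ an undotted sphere $=0$. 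Your claim that the dot sits on the birth disk $L_+$ would give $D_-\circ L_+$ a once-dotted sphere $=1\neq 0$, so it cannot be the correct component of the delooping isomorphism.

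With your (dotted) $L_+$, the composite $m\circ L_+$ is a dotted birth disk followed by a merge saddle, which is a \emph{dotted} cylinder over the relevant arc; in $\Cob^3_{\bullet/l'}$ a dot is a genuine degree $-2$ decoration that can be isotoped along a connected component but never cancelled, so ``slide the dot across the saddle and cancel the birth against the merge'' is not an available move, and the resulting morphism is not the identity. The fix is simply to use the correct $L_+$: an undotted birth disk followed by the merge saddle is an undotted cylinder, hence the identity on $qT_1\to T_2$. Your treatment of the split half is correct as written, since $D_-$ is indeed the undotted cap and the splitting saddle capped off by an undotted disk straightens to an undotted cylinder. Once the dot on $L_+$ is removed, the rest of your argument goes through.
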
 

\subsection{The Temperley-Lieb monoid and the Jones Normal Form} \label{ss.tlmjnf}

\begin{definition}[$\TL$-moves]  \label{n.tlmoves}
Let $n \in \mathbb{N}$, where $n\geq 2$. Let $\TL_n$ be the Temperley-Lieb monoid on $n$ strands consisting of words generated by the set
$E = \{ e_1, \ldots, e_{n-1}\} $
quotiented by the following relations: 
\begin{enumerate}
    \item[\namedlabel{item:TLA}{[TL-A]}] $e_i^2 \sim e_i$
    \item[\namedlabel{item:TLB}{[TL-B]}] $e_i \sim e_i e_j e_i$ for $|i-j| =1$
    \item[\namedlabel{item:TLC}{[TL-C]}] $e_i e_j \sim e_j e_i$ for $|i-j|\geq 2$.
\end{enumerate}
\end{definition}

\begin{remark}
The Temperley-Lieb algebra is usually defined over a commutative ring $R$ with a distinguished element $\delta \in R$ such that relation \ref{item:TLA} is given as $e_i^2 \sim \delta e_i$. For our purposes, the reader may assume $\delta=1$, though we honestly work with the underlying words, treating the relations as ``moves'' rather than equivalences.
\end{remark}

\begin{notation}
For two words $W, W'$ in $\TL_n$, we write $W = W'$ when they are the same word, and we write $W\sim W'$ when $W$ and $W'$ are in the same equivalence class in $\TL_n$. 
\end{notation}

\begin{notation}
\label{notation:seven-moves}
We further split up the Temperley-Lieb relations into \emph{moves} which replace a word with an equivalent word. We define the seven types of moves $a_\pm, b_\pm^{\pm1}, c$ as follows.
\begin{itemize}
    \item From relation \ref{item:TLA}, we have two moves:
    \[ e_i \map{a_+} e_i^2 \map{a_-} e_i. \]
    \item From relation \ref{item:TLB}, we have four moves:
    \[ e_i \map{b_+^1} e_i e_{i+1} e_i \map{b_-^1} e_i \]
    and 
    \[ e_i \map{b_+\inv} e_i e_{i-1}e_i \map{b\inv_-} e_i.\]
    \item Relation \ref{item:TLC} gives just one type of move:
    \[ e_ie_j \map{c} e_je_i\]
    where $|i-j|\geq 2$. 
\end{itemize}
\end{notation} 

\begin{definition}
Let $W, W'$ be equivalent words in $\TL_n$. 
A \emph{path} $P$ from $W$ to $W'$ is a sequence $(W_i)_{i=0}^k$ of words $W_i \in \TL_n$ such that $W = W_0$, $W' = W_k$, and for all $i < k$, $W_{i+1}$ is obtained from $W_i$ via a single move from Notation \ref{notation:seven-moves}. 
Note that for all $i$, $W_i \sim W \sim W'$. 

A \emph{subpath} of $P$ is a subsequence $(W_i)_{i = j}^l$ of $P$, where $0 \leq j \leq l \leq k$. 

We will often denote paths in $\TL_n$ in the following form, by including the moves used to obtain each $W_{i+1}$ from the former word $W_i$:
\[
W_0 \map{M_1} W_1 \map{M_2} \cdots \map{M_k} W_k,
\]
where the data of $(W_i)_{i=1}^k$ can be determined from the data of the path $P$. As a shorthand, we will also write $W \leadsto W'$ to specify there is a path from $W$ to $W'$. 

\end{definition}

\begin{definition}
    Let $P$ be a path $(W_i)_{i=0}^k$. If for all $i<k$, $\len(W_i) \leq \len(W_{i+1})$, we say that $P$ is \emph{monotone increasing (in length)}. 
On the other hand, if for all $i<k$, $\len(W_i)\geq \len(W_{i+1})$, then we say that $P$ is \emph{monotone decreasing (in length)}.
\end{definition}

\begin{definition}[Jones Normal Form \cite{Kassel-Turaev}] \label{d.jnf}
    For $1\leq k \leq n-1$, let $E_{n, k}$ be the set of $2k$-tuples $(i_1,\ldots, i_k, j_1, \ldots, j_k)$ of integers such that 
    \[0 < i_1 < i_2 < \ldots <i_k < n, \qquad 0 < j_1 < j_2 < \ldots < j_k < n, \]
    and 
    \[j_1 \leq i_1, j_2 \leq i_2, \ldots, j_k \leq i_k.\]
\end{definition}
For such a tuple $t = (i_1, \ldots, i_k, j_1, \ldots, j_k)$, set
\[W_t: = (e_{i_1}e_{i_1-1}\cdots e_{j_1})(e_{i_2}e_{i_2-1} \cdots e_{j_2})\cdots(e_{i_k}e_{i_k-1} \ldots e_{j_k}).  \]

If a word $W \in TL_n$ is such that $W = W_t$ for some tuple $t$, then $W$ is said to be in \textit{Jones normal form}. It is known that every word in $\TL_n$ is equivalent to a word in Jones normal form. In fact, given any word $W \in \TL_n$, there exists a monotone decreasing path from $W$ to its Jones normal form $\JNF(W)$.

\begin{proposition}{\cite[Lemma 4.1.2]{Jones-index}}
\label{prop:monotone-path}

Given $W \in \TL_n$, there exists a monotone decreasing path $P = (W_i)_{i=0}^k$, where $W_0 = W$ and $W_k = \JNF(W)$. That is, there is a monotone decreasing path that transforms $W$ to its Jones normal form $\JNF(W)$.

\begin{proof}

We provide a proof by induction on the index $n$.
    In the following, we say \emph{reduce greedily} to mean applying the following moves as much as one can:
    \begin{itemize}
        \item $e_ie_i \to e_i$
        \item $e_ie_{i\pm 1}e_i \to e_i$
    \end{itemize}
 The result of a greedy reduction is a new word such that $e_i^2$ and $e_ie_{i\pm 1}e_i$ do not appear. 
    Throughout the algorithm below, we reduce greedily whenever possible.

    \paragraph{\textbf{Base case $n=2$:}} 
    Any $W \in \TL_2$ is of the form $e_1^l$, for some $l \geq 1$. If $l>1$, we apply the greedy reduction; in fact we apply the move $e_ie_i \to e_i$, until the resulting word is $e_1 = \JNF(W)$. 

    \paragraph{\textbf{Induction step}:} Assume that the statement holds for any word of index $n$ and let $W \in \TL_{n+1}$, where the maximum index $e_n$ appears at least once.  (Otherwise, we are done by the induction hypothesis, since if $e_n$ does not appear in $W$, then $W \in \TL_{n}$). After greedy reduction, our modified word will be of the form
    \[
        W_0 \cdot e_n \cdot W_1 \cdot e_n \cdot W_2 \cdot ... \cdot e_n \cdot W_{k},
    \]
    where $W_i \in \TL_n$, for all $0\leq i \leq k$. By the induction hypothesis, we may modify this word until all the $W_i$ are in Jones normal form, i.e. we have the following word:
    \[
        \JNF(W_0) \cdot e_n \cdot \JNF(W_1) \cdot e_n \cdot \JNF(W_2) \cdot ... \cdot e_n \cdot \JNF(W_{k}).
    \]

    Now push all $k$ instances of $e_n$ as far right as possible using the commuting relations from $\TL_n$, and then reduce the word by using the relation $e_n^2 \to e_n$ whenever possible. Then, each instance of $e_n$ will either be (1) followed by a decreasing run beginning with $e_{n-1}$, or (2) is at the very end of the word. Note also that $e_{n-1}$ appears exactly once for $\JNF(W_i)$, for all $0 \leq i \leq k$.

Writing $V_i = \JNF(W_i)$, our modified word now takes the form 
    \[
        V_0 \cdot (e_n e_{n-1}) \cdot V_1 \cdot (e_ne_{n-1}) \cdot V_2 \cdot \ldots \cdot V_{k-1} (e_n) V_{k},
    \]
    where $V_{k}$ is either the identity or is a decreasing run beginning with $e_{n-1}$. 
    Observe that for all $1 \leq i \leq k$, $V_i \in \TL_{n-1}$, and thus it commutes with $e_n$. We may thus commute the second copy of $e_n$ to the left until it abuts against the $e_{n-1}$ directly following the first instance of $e_n$, commute the fourth copy of $e_n$ to the left so that it abuts against $e_{n-1}$ directly following the third instance of $e_n$, and so on. The result of commuting the second copy of $e_n$ as explained above is a word of the form
    \[
        V_0 \cdot (e_n e_{n-1} e_n) \cdot V_1 \cdot (e_{n-1}) \cdot V_2 \cdot \ldots \cdot V_{k-1} (e_n) V_{k}
    \]
    or
    \[
        V_0 \cdot (e_n e_{n-1} e_n) \cdot V_1 \cdot (e_{n-1}) \cdot V_2 \cdot \ldots \cdot V_{k-1}(e_n),
    \]
    depending on the ending of the word in $e_n$ or $V_{k-1}$. We replace now the instance $e_ne_{n-1}e_n$ with $e_n$. This reduces the number of instances of $e_n$ in the word.

    We repeat this algorithm (of commuting $e_n$ to the left until it abuts an $e_{n-1}$ followed by the replacement of $e_ne_{n-1}e_n$ with $e_n$) $\lfloor \frac{k}{2} \rfloor$ times until the number of instances of $e_n$ is at most one. If $e_n$ appears in the resulting word, then it must be the beginning of a decreasing run at the very end of the word; that is, our modified word is of the form
    \[
        U \cdot (e_n e_{n-1} \ldots e_m),
    \]
    where $m \leq n$ and $U \in \TL_n$; note the decreasing run may just be the word $e_n$. By the induction hypothesis, there is a monotone decreasing path taking $U$ to $\JNF(U)$. After applying this path, we arrive at 
    \[
        \JNF(U) \cdot (e_n e_{n-1} \ldots e_m),
    \]
    which is in Jones Normal Form.
\end{proof}
\end{proposition}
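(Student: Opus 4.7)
My plan is to proceed by induction on $n$, with inductive hypothesis exactly the statement of the proposition for $\TL_n$.

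For the base case $n=2$, every word is of the form $e_1^\ell$, and repeated applications of the move $a_-$ (that is, $e_1^2 \to e_1$) give a monotone decreasing path to $e_1 = \JNF(e_1^\ell)$.

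For the inductive step, let $W \in \TL_{n+1}$. If $e_n$ does not appear in $W$, then $W \in \TL_n$ and the conclusion follows from the induction hypothesis. Otherwise, I would factor $W$ as
\[
W = A_0 \cdot e_n \cdot A_1 \cdot e_n \cdots e_n \cdot A_k,
\]
with each $A_i \in \TL_n$, and then apply the induction hypothesis to each $A_i$ separately. Concatenating these subpaths gives a monotone decreasing path from $W$ to the word obtained by replacing each $A_i$ with $\JNF(A_i)$.

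The core step is then to reduce the number of occurrences of $e_n$, which I would do by exploiting the following structural observation about Jones Normal Form: in a word in JNF in $\TL_n$, the generator $e_{n-1}$ appears at most once, and if it does it must be the top of the \emph{final} run (because the tops of runs are strictly increasing). Thus each $\JNF(A_i)$ factors as $B_i \cdot D_i$, where $B_i$ involves only $e_1,\ldots, e_{n-2}$ and $D_i$ is either empty or of the form $e_{n-1} e_{n-2}\cdots e_{m_i}$. For each consecutive pair $e_n \cdot \JNF(A_i) \cdot e_n$, the generator $e_n$ commutes with $B_i$ and with every generator of $D_i$ except $e_{n-1}$. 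Using only length-preserving $c$-moves, I can then collect the two copies of $e_n$ into either $e_n e_{n-1} e_n$ (if $D_i$ contains $e_{n-1}$) or $e_n^2$ (otherwise), and apply $b_-^{-1}$ or $a_-$ to strictly decrease the length. Iterating this from left to right reduces the number of $e_n$'s; after at most $\lfloor k/2 \rfloor$ rounds, at most one $e_n$ remains, and the word takes the form $U \cdot e_n \cdot (e_{n-1} e_{n-2} \cdots e_m)$ for some $U \in \TL_n$ (with the trailing run possibly just $e_n$, or absent). A final application of the induction hypothesis converts $U$ to $\JNF(U)$ monotonically.

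The main obstacle is ensuring that every intermediate move is length-nonincreasing, rather than merely terminating at a shorter word. This is precisely where the Jones Normal Form hypothesis earns its keep: it pins the single possible $e_{n-1}$ to the right end of each $\JNF(A_i)$, which is exactly what is needed to stage every length-decreasing reduction using only length-preserving commutations. A more delicate secondary point, which I would examine carefully, is verifying that the terminal word $\JNF(U) \cdot (e_n e_{n-1} \cdots e_m)$ truly lies in Jones Normal Form for $\TL_{n+1}$; this requires the bottom index $m$ of the new final run to exceed the bottom indices of all runs of $\JNF(U)$, and if compatibility fails it would force a strengthened induction (for example, tracking the minimal admissible bottom index) or an additional tidying pass.
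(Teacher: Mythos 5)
Your proposal is correct in outline and follows essentially the same route as the paper's proof: induction on $n$, factoring along the occurrences of $e_n$, normalizing each block by the inductive hypothesis, exploiting the fact that $e_{n-1}$ occurs at most once in each $\JNF(A_i)$ (at the head of its last run) to collect consecutive copies of $e_n$ into $e_ne_{n-1}e_n$ or $e_n^2$ using only length-preserving commutations, and closing with one more appeal to the inductive hypothesis. The ``delicate secondary point'' you flag is genuine and is in fact glossed over by the paper, which simply asserts that $\JNF(U)\cdot(e_ne_{n-1}\cdots e_m)$ is in Jones Normal Form: this can fail when the bottom index $m$ does not exceed the bottom index of the last run of $\JNF(U)$ (for instance the algorithm as written terminates at $e_1\cdot(e_3e_2e_1)$, which is not in normal form and is equivalent to the shorter word $e_1e_3$), so the strengthened induction or extra tidying pass you propose would be needed in either write-up.
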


Equivalently, given any word $W$, there is a monotone increasing path $\JNF(W) \leadsto W$. This can be obtained by taking the reverse path from the proof of Proposition \ref{prop:monotone-path}.

\section{Gaussian elimination} \label{s.Gaussian_Elimination}
\subsection{Gaussian elimination isomorphisms}
\label{ss.Distinguishediso}

\begin{notation}[braid generators] \label{n.braidgen}
We use  $\sigma_i$ to denote the braid generator involving  the $i$th and the $(i+1)$th strand of the braid, and $\sigma_i^{-1}$ to represent the inverse of the braid generator $\sigma_i$;  see Figure \ref{f.bgenerator} for our convention.

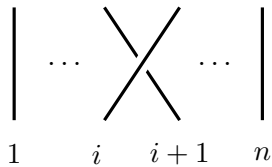
\begin{figure}[H] 
\begin{tikzpicture}[xscale=1,yscale=1.5]
\draw[very thick] (-1.2,0)--(-1.2,1);
\node at (-1.2,-.3){$1$};
\node at (-0.5,.5){$\dots$};
\draw[very thick] (1,0) to (0,1);
\node at (-0.1,-.3){$i$};
\draw[line width = 6pt,white,->] (0,0) to (1,1);
\draw[very thick] (0,0) to (1,1);
\node at (1,-.3){$i+1$};
\node at (1.5,.5){$\dots$};
\draw[very thick] (2.1,0)--(2.1,1);
\node at (2.1,-.3){$n$};
\end{tikzpicture}
\caption{The braid generator $\sigma_i.$} \label{f.bgenerator} 
\end{figure} 
\end{notation}

\begin{notation}[barred braid word] \label{n.barredbraidw}
Let $\beta$ be a braid on $n$ strands.
A word $w$ of (barred or unbarred) numbers $1, 2, \ldots, n-1$ represents a Kauffman state on $\beta$ as follows. We replace each $\sigma_i$ in $\beta$ by $i$ or $\overline{i}$ to obtain the word of numbers corresponding to the Kauffman state.  If the Kauffman state chooses the $0$-resolution on $\sigma_i$, then we replace $\sigma_i$ by $i$, otherwise we replace $\sigma_i$ by $\bar{i}$. 

For example, a barred braid word $1233 \bar{2}\bar{1}$ represents a Kauffman state which chooses the $0$-resolution on the first four crossings in the braid $\sigma_1\sigma_2\sigma_3\sigma_3 \sigma_2\sigma_1$, and the $1$-resolution on the last two crossings. We will use the subscript on a digit of a barred braid word to further indicate its position in the braid word, reading from left to right. For example, $1_12_23_33_4 \bar{2}_5\bar{1}_6$. 

We will use the verb ``to bar'' to denote changing the resolution from $0$ to $1$ on a crossing corresponding to a digit in the barred braid word. We will also use a lowercase letter to denote a barred word $w = w(\sigma)$ representing a Kauffman state $\sigma$, and use a barred digit to refer to the respective crossing. See Figure \ref{f.barring} for an illustrated example. 
\end{notation}

\begin{figure}[H]
\begin{center} 
\begin{tikzpicture}
\begin{scope}[rotate = 90]
\begin{scope}
\draw[very thick] (0, -.5) to (0, .5); 
\draw[very thick] (.5, -.5) to (.5, .5);
\draw[very thick] (0, .5) .. controls +(0, .25) and +(0,.25) .. (.5, .5); 
\end{scope}

\begin{scope}
\draw[very thick] (0, 1.25) to (0, 2.25); 
\draw[very thick] (.5, 1.25) to (.5, 2.25);
\draw[very thick] (0, 1.25) .. controls +(0, -.25) and +(0, -.25) .. (.5, 1.25); 
\end{scope}
\end{scope}

\draw[very thick] (-2.25, -.5) to (0.25, -.5);

\draw[very thick] (.5, .5) to (2.25, .5);

\draw[very thick] (-4, -.5) to (-1.5, -.5);

\draw[very thick] (-4, 0) to (-1.5, 0);

\draw[very thick] (-4, .5) to (-1.5, .5);

\draw[very thick] (-4, 1) to (2.25, 1);

\draw[very thick, dashed] (-3.5, -.5) to (-3.5, 0);

\draw[very thick, dashed] (-3, 0) to (-3, 0.5);

\draw[very thick, dashed] (-2.5, .5) to (-2.5, 1);

\draw[very thick, dashed] (-2, .5) to (-2,  1);
 
\begin{scope}[xshift=51, yshift= -14.25, rotate = 90]
\begin{scope}
\draw[very thick] (0, -.5) to (0, .5); 
\draw[very thick] (.5, -.5) to (.5, .5);
\draw[very thick] (0, .5) .. controls +(0, .25) and +(0,.25) .. (.5, .5); 
\end{scope}

\begin{scope}
\draw[very thick] (0, 1.25) to (0, 2.25); 
\draw[very thick] (.5, 1.25) to (.5, 2.25);
\draw[very thick] (0, 1.25) .. controls +(0, -.25) and +(0, -.25) .. (.5, 1.25); 
\end{scope}
\end{scope}
\end{tikzpicture} 
\end{center} 
\caption{\label{f.barring} The Kauffman state $1233 \bar{2}\bar{1}$.}
\end{figure}
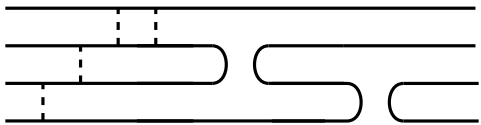 

Let $\TL_n$ be the Temperley-Lieb monoid on $n$ strands defined in Definition \ref{n.tlmoves}. The complete tangle resolution resulting from applying a Kauffman state $\sigma$ represented by a barred word $w$ to a braid $\beta$ can be described as a word of $ W \in\TL_n$, by sending each barred digit $\bar{i} \in w$ to $e_i$, following the order on $w$, reading from left to right. We will follow the convention of using uppercase for the $\TL_n$ word corresponding to the barred braid word for the Kauffman state.

For a Kauffman state represented by $w$, every closed component $C$ in $W$ has a \textit{leftmost crossing} $\ell(C)$ representing the crossing in $\beta$, whose replacement by $e_{i}$ for some $1\leq i \leq n-1$ by the Kauffman state borders the closed component, and whose position in $w$, reading the word from left to right, is the smallest. Similarly, every closed component $C$ in $W$ has a \textit{rightmost crossing} $r(C)$, which borders the closed component, and whose position in $w$ is the largest (see Figure \ref{f.leftmostrightmost}).

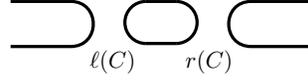
\begin{figure}[H]
\begin{center} 
  \begin{tikzpicture}[every node/.style={scale=.8}]
\begin{scope}[
rotate=-90, scale=.8]  
  \draw[very thick] (.75,2.5) to (.75,1.5) .. controls ++(0,-.5) and ++(0,-.5) .. (0,1.5) to (0,2.5);
\draw [very thick](0,-0.25) arc (180:360:0.35cm);
   \draw[very thick] (0,-.25) to (0,.25); 
      \draw[very thick] (.7,-.25) to (.7,.25); 
    \draw [very thick](0,0.25) arc (180:0:0.35cm);    
\draw[very thick] (0,-2.5)  to (0,-1.5) .. controls ++(0,.5) and ++(0,.5) .. (.75,-1.5)--(.75,-2.5);
\node (l) at (1, -0.8) {$\ell(C)$};
\node (r) at (1,0.8) {$r(C)$};
 \end{scope}

\end{tikzpicture}

\end{center}
\caption{\label{f.leftmostrightmost} Leftmost and rightmost crossings of a closed component $C$.}
\end{figure} 

\begin{definition}[GE pair] \label{d.gepair} In the set of enhanced Kauffman states of the braid $\beta$, a \textit{Gaussian Elimination pair} (\textit{GE pair} for short) is a pair of enhanced Kauffman states $(s, \epsilon)$, $(t, \delta)$, where 
\begin{itemize} 
\item[(G1)] $s = w\bar{i}_j x i_{j'}z $ and $t =  w\bar{i}_j x \bar{i}_{j'}z$, for $j'-j = n-1$, or 
\item[(G2)] $s = w\bar{i}_j x \bar{i}_{j'}z $ and $t =  w\bar{i}_j x_L \overline{(i+1)} x_R\bar{i}_{j'}z$, for $j'-j = n-1$, 
\end{itemize} 

such that their enhancements $\epsilon, \delta$ differ as follows: 
\begin{itemize} 
\item[(G1)] Suppose $s$ and $t$ are as in case (G1) above. Then $T$ has a closed component $C$, marked by $\delta$ with a $-$, such that $\ell(C) = \bar{i}_j$ and $r(C) = \bar{i}_{j'}$.  
\item[(G2)] Suppose $s$ and $t$ are as in  case (G2) above. Then $S$ has a closed component $C$ that $\delta$ marks with a $+$, and $\ell(C) = \bar{i}_{j}$, $r(C) = \bar{i}_{j'}$. 
\end{itemize} 
See Figure \ref{f.gepairs} for an illustration of both cases. 

\begin{figure}[H] 
\centering

 \begin{subfigure}{.75\textwidth}
 \centering
 \begin{tikzpicture}[every node/.style={scale=.7}, scale=.8]
\begin{scope}[shift={(-3,0)}, rotate=-90
] 
\draw[very thick] (.75,2.5) to (.75,1.5) .. controls ++(0,-.5) and ++(0,-.5) .. (0,1.5) to (0,2.5);
 \draw[line width = 1pt, dashed] (0,2)--(.75,2);
\begin{scope}[shift={(0, 2)}]
\draw[very thick] (0,-2.5)  to (0,-1.5) .. controls ++(0,.5) and ++(0,.5) .. (.75,-1.5)--(.75,-2.5);
\end{scope}
 \end{scope}

 \draw[very thick, purple, -stealth] (1,-0.25)--(3,-0.25);

 \node at (2,0){$G1$};
 

\begin{scope}[shift={(8,0)},rotate=-90
]

\begin{scope}[shift={(0,-1)}]
  \draw[very thick] (.75,2.5) to (.75,1.5) .. controls ++(0,-.5) and ++(0,-.5) .. (0,1.5) to (0,2.5);
  \draw [very thick](0,-0.25) arc (180:360:0.35cm);
   \draw[very thick] (0,-.25) to (0,.25); 
      \draw[very thick] (.7,-.25) to (.7,.25); 
    \draw [very thick](0,0.25) arc (180:0:0.35cm);    
    \end{scope}
    
 \begin{scope}[shift={(0,-1)}]
\draw[very thick] (0,-2.5)  to (0,-1.5) .. controls ++(0,.5) and ++(0,.5) .. (.75,-1.5)--(.75,-2.5);
\end{scope}
  \end{scope}
  
\end{tikzpicture}\caption{}
\end{subfigure}

 \begin{subfigure}{.75\textwidth}
  \centering
  \begin{tikzpicture}[every node/.style={scale=.8}]
\begin{scope}[shift={(-5,0)}, rotate=-90, scale=.8] 
 \draw[very thick] (-.75,-2.5)--(-.75,2.5); 
  \draw[very thick] (.75,2.5) to (.75,1.5) .. controls ++(0,-.5) and ++(0,-.5) .. (0,1.5) to (0,2.5);
\draw [very thick](0,-0.25) arc (180:360:0.35cm);
   \draw[very thick] (0,-.25) to (0,.25); 
      \draw[very thick] (.7,-.25) to (.7,.25); 
    \draw [very thick](0,0.25) arc (180:0:0.35cm);    
\draw[very thick] (0,-2.5)  to (0,-1.5) .. controls ++(0,.5) and ++(0,.5) .. (.75,-1.5)--(.75,-2.5);
\node at (0.4,0.2){$+\,\,\,\,\,\,$};
 \end{scope}
 \draw[very thick, purple, -stealth] (-2,-0.25)--(0,-0.25);
 \node at (-1,0){$G2$};
\begin{scope}[shift={(3,0)},rotate=-90, scale=.8]
\draw[very thick] (.75,2.5) to (.75,1.5) .. controls ++(0,-.5) and ++(0,-.5) .. (0,1.5) to (0,2.5);
\begin{scope}[shift={(-.75,-.5)}]
  \draw[very thick] (0,-2)  to (0,0) .. controls ++(0,.5) and ++(0,.5) .. (.75,0).. controls ++(0,-.5) and ++(0,-.5)..(1.5,0) to (1.5,.5)to (1.5,1).. controls ++(0,.5) and ++(0,.5)..(.75,1) .. controls ++(0,-.5) and ++(0,-.5) .. (0,1) to (0,3);
  \end{scope} 
\draw[very thick] (0,-2.5)  to (0,-1.5) .. controls ++(0,.5) and ++(0,.5) .. (.75,-1.5)--(.75,-2.5);
  \end{scope}
\end{tikzpicture}
\caption{}
\end{subfigure}
\caption{ G1 (A) and G2 (B) Gaussian elimination pairs. }\label{f.gepairs}
\end{figure}
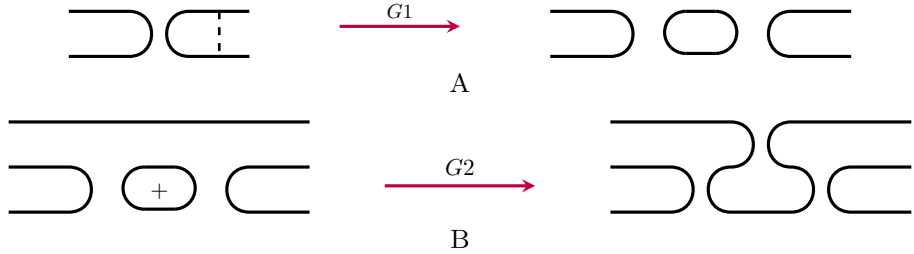 

We also say that the respective (unenhanced) Kauffman states $s$ and $t$ are a GE pair.

\end{definition}

We define Gaussian eliminations G1 and G2 corresponding to the two cases in Definition \ref{d.gepair}.

\begin{definition}[Gaussian elimination isomorphisms] \label{d.gelimiso} \ \\
\underline{Gaussian elimination G1:}
 Suppose we have a GE pair $(s, \epsilon)$, $(t, \delta)$ of type G1. By definition, the Kauffman state $\sigma(s)$ is represented by a word $$s = w\bar{i}_jxi_{j'}z,$$ and the state $\sigma(t)$ is represented by a word $$t = w\bar{i}_jx\bar{i}_{j'}z.$$ 
 
   By Lemma \ref{l.mergesplit}, there is an isomorphism $D_{-} \circ \Delta: (s, \epsilon) \to q^{-1} (t, \delta)$, that is a component of the Khovanov chain complex.
   
  It is possible to apply Gaussian elimination (Lemma \ref{l.geliminate}) to this isomorphism to eliminate both the enhanced Kauffman state $(s, \epsilon)$ and the target $(t, \delta)$.  We refer to such an isomorphism $g = D_{-} \circ \Delta$ as a G1 \emph{isomorphism}. See Figure \ref{f.g1} for such a map $\Delta$.
 
\begin{figure}[H]
\begin{tikzpicture}[every node/.style={scale=.7}, scale=.6]
\begin{scope}[shift={(-6,0)}, rotate=-90, scale=.8] 
\begin{scope}[shift={(.5, 0.9)}];
\node at (-1.5, 1.2){$i_{j'}$};
\node at (-1.5,1.95){$\dots$};
\begin{scope}[shift={(0,-.15)}]
\node at (-1.5,-0.5){$\dots$};
\node at (-1.5,0.1){$\bar{i}_{j}$};
\node at (-1.5,0.75){$\dots$};
\end{scope}
\end{scope}

\draw[very thick] (.75,2.5) to (.75,1.5) .. controls ++(0,-.5) and ++(0,-.5) .. (0,1.5) to (0,2.5);
 \draw[line width = 1pt, dashed] (0,2)--(.75,2);

\begin{scope}[shift={(0, 2)}]
\draw[very thick] (0,-2.5)  to (0,-1.5) .. controls ++(0,.5) and ++(0,.5) .. (.75,-1.5)--(.75,-2.5);
\end{scope}
 \end{scope}

\begin{scope}[shift={(-0.8,1)}, rotate=-90, scale=.6]
 
\begin{scope}[shift={(0.5,0)}];
\node at (-1.5, 1){$\bar{i}_{j}$};
\node at (-1.5, 2){$\bar{i}_{j'}$};
\node at (-1.5,0){$\dots$};
\node at (-1.5,3){$\dots$};
\end{scope}

\draw[very thick] (.75,2.5) to (.75,1.5) .. controls ++(0,-.5) and ++(0,-.5) .. (0,1.5) to (0,2.5);
\begin{scope}[shift={(0, 2)}]
\draw[very thick] (0,-2.5)  to (0,-1.5) .. controls ++(0,.5) and ++(0,.5) .. (.75,-1.5)--(.75,-2.5);
\end{scope}
\draw[line width = 2pt, orange] (0,2)--(.75,2);
  \end{scope}

 \draw[very thick, purple, -stealth] (-2,0)--(2,0);
\begin{scope}[shift={(-2,0)}]
  
\begin{scope}[shift={(8.5,0)},rotate=-90, scale=.8]

\begin{scope}[shift={(.5,1)}];
\node at (-1.5,-1){$\bar{i}_{j'}$};
\begin{scope}[shift={(0,-2.25)}]
\node at (-1.5,-1.75){$\dots$};
\node at (-1.5,-1){$\bar{i}_{j}$};
\node at (-1.5,0){$\dots$};
\node at (-1.5,2){$\dots$};
\end{scope}
\end{scope}

\begin{scope}[shift={(0,-1)}]
  \draw[very thick] (.75,2.5) to (.75,1.5) .. controls ++(0,-.5) and ++(0,-.5) .. (0,1.5) to (0,2.5);
  \draw [very thick](0,-0.25) arc (180:360:0.35cm);
   \draw[very thick] (0,-.25) to (0,.25); 
      \draw[very thick] (.7,-.25) to (.7,.25); 
    \draw [very thick](0,0.25) arc (180:0:0.35cm);    
    \end{scope}
    
 \begin{scope}[shift={(0,-1)}]
\draw[very thick] (0,-2.5)  to (0,-1.5) .. controls ++(0,.5) and ++(0,.5) .. (.75,-1.5)--(.75,-2.5);
\end{scope}
  \end{scope}
  \end{scope}
  
\end{tikzpicture}
\caption{\label{f.g1} The map $\Delta$ for a G1 Gaussian elimination isomorphism.}
\end{figure}
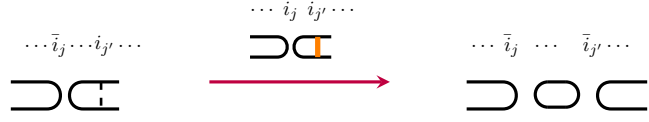 

\underline{Gaussian elimination G2:}
Suppose we have a GE pair $(s, \epsilon)$, $(t, \delta)$ of type G2. By definition, the Kauffman state $\sigma(s)$ is represented by a word $$s = w\bar{i}_jx\bar{i}_{j'}z,$$ and the state $\sigma(t)$ is represented by a word $$t = w\bar{i}_jx_L\overline{(i+1)}x_R\bar{i}_{j'}z.$$

By Lemma~\ref{l.mergesplit}, we may
apply Gaussian elimination (Lemma \ref{l.geliminate}) to the isomorphism $m \circ L_+$ to remove both $(s, \epsilon)$ and $(t, \delta)$ from the chain complex.   We refer to such an isomorphism  $g = m \circ L_+$ as a G2 \emph{isomorphism}. See Figure \ref{f.g2} for such a map $m$.

  \begin{figure}[H]
  
  \begin{tikzpicture}[every node/.style={scale=.8}]

\begin{scope}[shift={(-5,0)}, rotate=-90, scale=.8] 

\node at (-1.5,-1.75){$\dots$};
\node at (-1.5,-.75){$\bar{i}_j$};
\node at (-1.5,0){$\dots$};
\node at (-1.5,.75){$\bar{i}_{j'}$};
\node at (-1.5,1.75){$\dots$};

 \draw[very thick] (-.75,-2.5)--(-.75,2.5); 
  \draw[very thick] (.75,2.5) to (.75,1.5) .. controls ++(0,-.5) and ++(0,-.5) .. (0,1.5) to (0,2.5);
\draw [very thick](0,-0.25) arc (180:360:0.35cm);
   \draw[very thick] (0,-.25) to (0,.25); 
      \draw[very thick] (.7,-.25) to (.7,.25); 
    \draw [very thick](0,0.25) arc (180:0:0.35cm);    
\draw[very thick] (0,-2.5)  to (0,-1.5) .. controls ++(0,.5) and ++(0,.5) .. (.75,-1.5)--(.75,-2.5);
 \end{scope}

 \begin{scope}[shift={(0,1)}, rotate=-90, scale=.6]
 
 \node at (-1.5,-1.75){$\dots$};
\node at (-1.5,0){$\overline{(i+1)}$};
\node at (-1.5,1.75){$\dots$};

\draw[very thick] (-.75,-2.5)--(-.75,2.5); 
  \draw[very thick] (.75,2.5) to (.75,1.5) .. controls ++(0,-.5) and ++(0,-.5) .. (0,1.5) to (0,2.5);
\draw [very thick](0,-0.25) arc (180:360:0.35cm);
   \draw[very thick] (0,-.25) to (0,.25); 
      \draw[very thick] (.7,-.25) to (.7,.25); 
    \draw [very thick](0,0.25) arc (180:0:0.35cm);    
\draw[very thick] (0,-2.5)  to (0,-1.5) .. controls ++(0,.5) and ++(0,.5) .. (.75,-1.5)--(.75,-2.5);
\draw[line width = 2pt, orange] (-.75, 0) -- (0,0);
  \end{scope}
 \draw[very thick, purple, -stealth] (-2,0)--(2,0);
\begin{scope}[shift={(5,0)},rotate=-90, scale=.8]
\node at (-1.5, -2){$\dots$};
\node at (-1.5,-1.4){$\bar{i}_j$};
\node at (-1.5,-.9){$\dots$};
\node at (-1.5,0){$\overline{(i+1)}$};
\node at (-1.5,.9){$\dots$};
\node at (-1.5,1.4){$\bar{i}_{j'}$};
\node at (-1.5, 2){$\dots$};
\draw[very thick] (.75,2.5) to (.75,1.5) .. controls ++(0,-.5) and ++(0,-.5) .. (0,1.5) to (0,2.5);
\begin{scope}[shift={(-.75,-.5)}]
  \draw[very thick] (0,-2)  to (0,0) .. controls ++(0,.5) and ++(0,.5) .. (.75,0).. controls ++(0,-.5) and ++(0,-.5)..(1.5,0) to (1.5,.5)to (1.5,1).. controls ++(0,.5) and ++(0,.5)..(.75,1) .. controls ++(0,-.5) and ++(0,-.5) .. (0,1) to (0,3);
  \end{scope} 
   \draw[very thick] (0,-2.5)  to (0,-1.5) .. controls ++(0,.5) and ++(0,.5) .. (.75,-1.5)--(.75,-2.5);
  \end{scope}
\end{tikzpicture}
 \caption{The map $m$ for a G2 Gaussian elimination isomorphism. }\label{f.g2} 
 \end{figure}
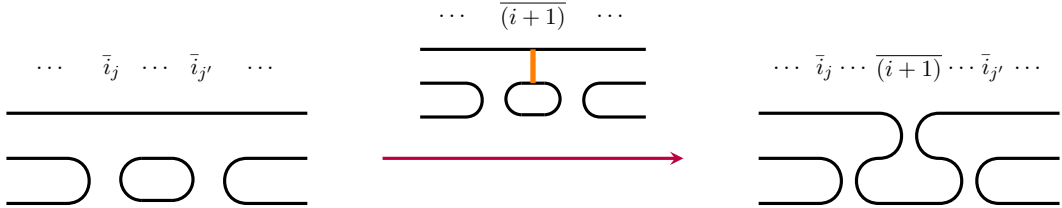 
 
Given a Gaussian elimination isomorphism $g_i: (s, \epsilon) \rightarrow (t, \delta)$, where $i = 1$ indicates a G1 Gaussian elimination isomorphism, and $i = 2$ indicates a G2 Gaussian elimination isomorphism, we call $(s, \epsilon)$ the \textit{source} of $g_i$, and $(t, \delta)$ the \textit{target} of $g_i$. 
 
\subsection{Algorithm: choosing a distinguished Gaussian elimination isomorphism}
\label{ss.algorithm}
Typically, given a pair of enhanced Kauffman states $(s, \epsilon)$ and $(t, \delta)$ that forms a GE pair following Definition \ref{d.gepair}, there could be different Gaussian eliminations isomorphisms supported on different pairs of subwords of $s$ and $t$. 
Given an enhanced Kauffman state $(w, \varepsilon)$, we use the following steps to choose (at most one) isomorphism G1 or G2 whose source is $(w, \varepsilon)$. We refer to such an isomorphism as a \textit{distinguished isomorphism}. 
\end{definition} 

\begin{definition} \label{d.setGaussianG}
We build the set $\G$  of distinguished Gaussian elimination isomorphisms inductively, where we set $\G_0 = \emptyset$ and $$\G_i := \cup_{k \leq i} \G_{k-1}.$$ Let $(w, \epsilon)$ be an enhanced Kauffman state in the Khovanov chain complex $\CKh(\beta)$ with homological and quantum gradings $i$ and $j$, respectively. Let $m(\beta)$ be the lowest homological degree of the chain $\CKh(\beta)$, and let $M(\beta)$ be the highest homological degree. We begin by examining $(w, \epsilon) \in \CKh_{\underline{i}, *}(\beta)$. 
\end{definition}

We start by looking at a generator $(w, \epsilon)$ of the Khovanov chain complex $\CKh_{i, *}(\beta)$ of minimum homological degree $i(w) = m(\beta)$. To determine if there exists a distinguished isomorphism with source $(w, \varepsilon)$, we analyze subwords $s$ of $w$, to determine if $(w, \varepsilon)$ is the source or target of a Gaussian elimination isomorphism by applying Definitions \ref{d.gepair} and \ref{d.gelimiso}. 
A subword $s$ of $w$ that supports a Gaussian elimination isomorphism must start with a barred letter but may end with a barred or unbarred letter. We define $\init(\alpha)$ to be the first letter of the subword $s$ and $\fin(\alpha)$ to be the last letter of the subword $s$. 

The first subword $s$ of $w$ is chosen as follows: We start at the leftmost barred letter in $w$; call that the `start' of $s$. The `stop' of $s$ is a letter to the right of the `start' of $s$. Initially, the `stop' of $s$ is the letter in $w$ immediately to the right of the `start' of $s$.

Fix the `start' location, and consider the subword $s$ with the first letter at the `start' location and the last letter at the `stop' location. 

\underline{Step 1:} If the subword $s$ tells us that $(w, \varepsilon)$ is the target of a G1 or G2 isomorphism in $\mathcal{G}_{i-1}$, which we can check by unbarring barred digits in $s$ from left to right, looking at the pre-images, and applying Definitions \ref{d.gepair} and \ref{d.gelimiso}, then we do not choose a distinguished isomorphism with source $(w, \varepsilon)$ and we move to the next generator $(w, \epsilon)$ of $\CKh_{i, *}(\beta)$ and start from Step 1. If there are no more generators remaining in $\CKh_{i, *}(\beta)$, then we move to a generator in $\CKh_{i+1, *}(\beta)$. Since $M(i) < \infty$, we terminate when there are no more generators in the complex to consider.   Otherwise, we continue with Step 2.

\underline{Step 2:} If $s$ corresponds to the source of a G1 or G2 Gaussian elimination isomorphism, which we can check by barring unbarred digits of $s$ from left to right and examining the images using Definitions \ref{d.gepair} and \ref{d.gelimiso}, then we declare that this G1 (or G2) isomorphism $g_1$ is a distinguished isomorphism with source $(w, \varepsilon)$ and add $g_1$ (or $g_2$) to $\mathcal{G}_i$ (by `adding' we also record the information of the source and target enhanced states of $g_1$ (or $g_2$)). We move to the next generator $(w, \epsilon)$ of $\CKh_{i, *}(\beta)$ and start from Step 1. If there are no more generators remaining in $\CKh_{i, *}(\beta)$, then we move to a generator in $\CKh_{i+1, *}(\beta)$. Since $M(i) < \infty$, we terminate when there are no more generators in the complex to consider. Otherwise, we continue with Step 3.

\begin{remark} 
We remark that it is not possible for identical enhanced Kauffman states corresponding to a subword $s$ to support both a G1 \textit{and} G2 isomorphism (see Lemma \ref{lem:unique-distinguished-isom}) so this procedure is well-defined. 
\end{remark}

\underline{Step 3:} Check if the `stop' of the subword $s$ is the last letter of $w$. If it is, then move the `start' one digit to the right, and move `stop' immediately to the right of the `start'. Go to Step 1 and repeat the rest of the steps with the new subword $s$ whose initial digit is `start,' and whose final digit is `stop.' 

If the `stop' of the subword $s$ is not the last letter of $w$, then move `stop' one digit to the right to obtain a new subword $s$ and repeat Steps 1 and 2 with $s$. 

The procedure terminates since the length of the word is finite.

The method described above for choosing distinguished isomorphisms implies that the following lemma holds.

\begin{lemma}
\label{lem:unique-distinguished-isom}
   The distinguished isomorphisms are chosen so that for each enhanced Kauffman state $(w, \varepsilon)$, there is at most one associated distinguished isomorphism $g(w)$, such that its source is $s_g = (w, \varepsilon)$ or its target $t_g = (w, \varepsilon)$. 
\end{lemma}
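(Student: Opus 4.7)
The plan is to verify the claim by direct analysis of the algorithm in Section~\ref{ss.algorithm}. The statement decomposes into three sub-claims:
(i) at most one distinguished isomorphism has $(w,\varepsilon)$ as source;
(ii) at most one distinguished isomorphism has $(w,\varepsilon)$ as target;
(iii) these two cases are mutually exclusive.

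Sub-claim (i) is immediate from the deterministic structure of the algorithm. The subword iteration processes the pairs $(\mathrm{start},\mathrm{stop})$ in a fixed order (leftmost \emph{start} first, then expanding \emph{stop} to the right), and Step 2 terminates the processing of $(w,\varepsilon)$ as soon as the first valid source is detected. I would additionally verify that a single subword cannot simultaneously satisfy the conditions of both a G1 source and a G2 source: inspecting Definition~\ref{d.gepair}, G1 requires the terminal letter of the subword at position $j'$ to be \emph{unbarred} whereas G2 requires it to be \emph{barred}, and these conditions are mutually exclusive.

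Sub-claim (iii) is handled by Step 1. If $(w,\varepsilon)$ is the target of a distinguished isomorphism $g$, then the source of $g$ lies at homological degree $h(w)-1$, and since the algorithm processes states in order of increasing homological degree, $g$ has already been added to $\mathcal{G}_{h(w)-1}$ by the time the algorithm reaches $(w,\varepsilon)$. The algorithm's iteration will encounter a subword witnessing $(w,\varepsilon)$ as a target of $g$ (by unbarring the digits of $w$ indicated by $g$ and comparing to the source), thereby triggering Step 1 and aborting the processing of $(w,\varepsilon)$ before any source-assignment occurs.

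Sub-claim (ii) is the main obstacle. Suppose for contradiction there exist two distinguished isomorphisms $g_1\colon (s_1, \varepsilon_1) \to (w,\varepsilon)$ and $g_2\colon (s_2, \varepsilon_2) \to (w,\varepsilon)$ with distinct sources. Each source $s_k$ differs from $w$ by toggling a single crossing at position $p_k$, with $p_1\neq p_2$. I would analyze the four sub-cases determined by the types (G1 or G2) of $g_1$ and $g_2$. In each case, the subword realizing $g_k$ must begin at the leftmost barred letter of $s_k$ and terminate at $p_k$, with the intermediate geometry dictated by the circle $C_k$ in $w$ (or in $s_k$, for G2) that is delooped by $g_k$ as in Definition~\ref{d.gelimiso}. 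Since $p_1\neq p_2$, the circles $C_1, C_2$ are distinct and therefore appear as disjoint arcs in the relevant tangle resolution. A careful bookkeeping of the positions $p_1, p_2, \ell(C_1), \ell(C_2)$ relative to the algorithm's leftmost-\emph{start} iteration on $s_1$ and $s_2$ should show that in each sub-case either the algorithm's first triggered subword fails to realize $g_k$ on one of the $s_k$ (contradicting the choice of $g_k$ as distinguished), or one processing triggers Step 1 via the other $g_i$, aborting before Step 2 can fire. The hardest sub-case I expect to be G1--G1 with both circles $C_1, C_2$ carrying the enhancement $-$: here the delicate point is to compare the leftmost barred letters of $s_1$ and $s_2$ (which differ, since $p_1 \neq p_2$ shifts which positions are barred) and trace how the algorithm's subword exploration interacts with both circle structures simultaneously; the contradiction should emerge from the order in which the algorithm's iteration visits the relevant positions relative to $\ell(C_1), \ell(C_2), p_1, p_2$.
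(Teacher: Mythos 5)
Your decomposition into (i) source-uniqueness, (ii) target-uniqueness, and (iii) mutual exclusivity of the two roles is a reasonable reading of the lemma, and your arguments for (i) and (iii) are essentially the paper's proof: the paper argues that Step 1 prevents a source-assignment when $(w,\varepsilon)$ already occurs as the target of an isomorphism in $\mathcal{G}_{i-1}$ (which is well-defined because sources sit one homological degree below their targets, so those isomorphisms have already been recorded), and that within Step 2 a single subword cannot support both a G1 and a G2 source because the G1 pattern $\bar{i}_j x i_{j'}$ and the G2 pattern $\bar{i}_j x \bar{i}_{j'}$, together with the required enhancement on the relevant closed component, are incompatible. Up to that point your proposal and the paper's proof coincide.

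The gap is sub-claim (ii). You correctly flag it as the main obstacle, but what you offer is a plan rather than a proof: ``a careful bookkeeping \ldots should show,'' ``the contradiction should emerge from \ldots.'' None of the four G1/G2 sub-cases is carried out, and the crucial interaction --- for instance, in the G1--G1 case, the fact that the source $s_1$ of $g_1$ still contains the second circle $C_2$ marked with a $-$ with both of its boundary crossings barred, so that the algorithm's left-to-right scan of $s_1$ may encounter a target-witness or a different source-witness before it reaches the subword supporting $g_1$ --- is exactly where the argument must be made and is not. So, as written, the proposal does not establish the lemma. For what it is worth, the paper's own proof does not carry out this analysis either; it addresses only (i) and (iii) and implicitly treats the first-match, degree-increasing design of the algorithm as settling the remainder. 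You have therefore isolated a point that the paper glosses over, but you have not closed it, and closing it is the substantive content a complete proof of this lemma would require.
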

\begin{proof} By design, Step 1 in
Algorithm \ref{ss.algorithm} will not choose a distinguished Gaussian elimination isomorphism if the word $w$ contains a target of a distinguished Gaussian elimination isomorphism. If the word $w$ does not contain any subword that is the target of a Gaussian elimination isomorphism, we consider the ambiguity that may result through each subsequent step of the algorithm. In Step 2 of the algorithm, the subword $s$ is being compared to the source of possible GE pairs. The Kauffman state $\bar{i}_jxi_{j'}$ supporting a G1 distinguished Gaussian elimination isomorphism is distinct from the Kauffman state $\bar{i}_jx\bar{i}_{j'}$ supporting a G2 distinguished Gaussian elimination isomorphism, so the assignment cannot coincide. In addition, for $s$ to be the source of a G2 Gaussian elimination isomorphism, the closed component corresponding to the adjacent barred $i$'s in the Kauffman state is marked with a $-$, which gives a different enhanced Kauffman state compared to one where the closed component is marked with a $+$.
\end{proof}

\subsection{Whittling the complex via Gaussian elimination isomorphisms}
\label{ss.whittling_graph}

\begin{definition} \label{d.isocollection}
 Denote by $\mathcal{G}$ the collection of distinguished isomorphisms chosen on the enhanced Kauffman states of $\CKh(ft_n^k)$, according to the procedure described in Section \ref{ss.Distinguishediso}. 
\end{definition} 

We define a directed graph $G$ whose vertices are elements of $\G$ and whose directed edges come from components of differentials of the Khovanov chain complex $\CKh(ft_n^k)$. 

\begin{definition} \label{d.graphisocollection}
Let the vertex set of the graph $G$ be $V(G) = \G$. Let $\alpha: (s, \epsilon) \rightarrow (t, \delta)$ and $\alpha': (s', \epsilon') \rightarrow (t', \delta')$ be distinguished isomorphisms in $\G$ with corresponding vertices $v$ and $v'$, respectively. There is a directed edge $E$ in $G$ from $v$ to $v'$ if there is a map 
\[ \varphi: (s, \epsilon) \rightarrow (t', \delta') \]  such that it satisfies the following two conditions: 
\begin{itemize}
\item $\varphi$ is a single component of a differential in $\CKh(ft^k)$. Here a single component is the result of composing a map that bars a letter (representing the change in the Kauffman state from the $0$-resolution to the $1$-resolution) with a component of the delooping map, and
\item  $\varphi \not= 0$. 
\end{itemize} 

We call $\varphi$ a \textit{connecting map} between $\alpha$ and $\alpha'$. 
\end{definition}

The purpose of this graph is to understand the maps in the new complex obtained by applying Gaussian elimination along a distinguished isomorphism $g\in \G$.

\begin{lemma} \label{l.gacyclic} 
For a braid $\beta$, let $\mathcal{F}$ be a collection of isomorphisms in $\CKh(\beta)$. Suppose $f\in \mathcal{F}$ and denote the new chain complex $\CKh_f(\beta)$ obtained by applying Gaussian elimination (see Lemma \ref{l.geliminate}) along $f$ in $\CKh(\beta)$. Suppose the directed graph $G$ defined by Definition \ref{d.graphisocollection} has no 2-cycles and let $f'\not= f$ be another element in $\mathcal{F}$. Then the component of the differential $f'_f$ corresponding to $f'$ in the Gaussian reduced complex $\CKh_f(\beta)$ is equal to $f'$. 
\end{lemma}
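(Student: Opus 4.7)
The plan is to apply the Gaussian elimination formula from Lemma \ref{l.geliminate} with $\phi = f\colon s_f \to t_f$ and track what happens to the component corresponding to $f'\colon s_{f'} \to t_{f'}$. First I would set up the four-term segment of $\CKh(\beta)$ around $f$: in the notation of Lemma \ref{l.geliminate}, take $b_1 = s_f$ and $b_2 = t_f$, and collect the other direct summands at the homological degree of $s_f$ into $D$ and those at the degree of $t_f$ into $E$. By Lemma \ref{lem:unique-distinguished-isom}, the four objects $s_f, t_f, s_{f'}, t_{f'}$ are pairwise distinct, so $s_{f'}$ and $t_{f'}$ both persist in the reduced complex.

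If $f'$ is not a component of the middle block $\epsilon\colon D \to E$ (for instance if $t_{f'} = s_f$ sits in $\alpha$, or more generally if $f'$ lives in one of the unaffected maps $\alpha,\beta,\mu,\nu$ of the segment), then the Gaussian elimination formula leaves that entry unchanged, giving $f'_f = f'$ at once. In the remaining case $s_{f'}$ is a summand of $D$ and $t_{f'}$ is a summand of $E$, and $f'$ appears as the $(s_{f'}, t_{f'})$-entry of $\epsilon$. Lemma \ref{l.geliminate} then yields
\[
f'_f \;=\; f' \;-\; \gamma_{s_f, t_{f'}} \circ \phi^{-1} \circ \delta_{s_{f'}, t_f},
\]
where $\gamma_{s_f, t_{f'}}$ and $\delta_{s_{f'}, t_f}$ denote the entries of $\gamma\colon b_1 \to E$ and $\delta\colon D \to b_2$ between the specified pairs of enhanced states.

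The crux is showing that the correction $\gamma_{s_f, t_{f'}} \circ \phi^{-1} \circ \delta_{s_{f'}, t_f}$ vanishes. The entry $\gamma_{s_f, t_{f'}}$ is the component of the differential of $\CKh(\beta)$ between the enhanced states $s_f$ and $t_{f'}$; it is a sum of single components of the kind appearing in Definition \ref{d.graphisocollection}, so if it is nonzero then some such summand is nonzero and witnesses a directed edge $f \to f'$ in $G$. Symmetrically, $\delta_{s_{f'}, t_f} \neq 0$ would produce an edge $f' \to f$. If both entries were nonzero, the two edges would form a $2$-cycle in $G$, contradicting the hypothesis. Hence at least one of $\gamma_{s_f, t_{f'}}, \delta_{s_{f'}, t_f}$ is zero, the correction term vanishes, and $f'_f = f'$.

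The step I expect to require the most care is the bookkeeping that identifies the matrix entries $\gamma_{s_f, t_{f'}}$ and $\delta_{s_{f'}, t_f}$ as actual sums of connecting maps in the sense of Definition \ref{d.graphisocollection}, namely composites of barring a single crossing with a single delooping component. Once that identification is in place, the argument is just \textquotedblleft nonzero sum forces a nonzero summand forces an edge,\textquotedblright{} and the no-$2$-cycle hypothesis finishes the proof.
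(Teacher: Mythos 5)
Your proposal is correct and follows essentially the same route as the paper's proof: both identify the new differential component as $\epsilon - \gamma\phi^{-1}\delta$ via Lemma \ref{l.geliminate} and observe that nonzero $\gamma$ and $\delta$ entries between the relevant states would produce a $2$-cycle in $G$, contradicting the hypothesis. Your write-up is somewhat more careful than the paper's about the matrix-entry bookkeeping and the case where $f'$ lies outside the middle block, but the underlying argument is the same.
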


\begin{proof} By Lemma \ref{l.geliminate}, the Gaussian reduced complex $\CKh_f(\beta)$ is the original complex without the terms $b_1$ and $b_2$ that are the source and target of the isomorphism $f$. The new component of the differential in $\CKh_f(\beta)$ with source $c_1$ and $c_2$ is given by $\epsilon  - \gamma \phi^{-1} \delta$ (see Figure \ref{f.2-cycle}).  

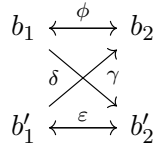
\begin{figure}[H]
\begin{center}
\begin{tikzcd}
   b_1 
        \arrow{r}{\phi} \arrow{dr}[swap, near start]{\delta}
        & b_2 \arrow{l} \\
   b'_1 
        \arrow{r}{\epsilon}  \arrow{ur}[swap, near end]{\gamma}
        & b'_2 \arrow{l}
\end{tikzcd} 
\end{center}
\caption{\label{f.2-cycle} A 2-cycle involving $\phi, \epsilon$.} 
\end{figure} 

Here $\epsilon = f'$, and if $\gamma$ and $\delta$ are both nonzero, then they define a 2-cycle in the graph $G$. Since our graph $G$ has no 2-cycles, one of $\gamma$ and $\delta$ is zero as $\phi^{-1} = f^{-1}$ is an isomorphism. If one of $\gamma$ and $\delta$ is  zero, the components of the differentials in the new complex remains unchanged. 
\end{proof} 

In the context of our paper, with $\G$ denoting the collection of distinguished Gaussian elimination isomorphisms, we aim to iterate this construction over the elements of $\alpha \in \G$. Given $\alpha \in \G$, let $\CKh_\alpha(\beta)$ denote the new chain complex obtained by applying Gaussian elimination to $\CKh(\beta)$ along $\alpha$. We would like to iterate over the remaining Gaussian elimination isomorphisms in $\G \setminus \{\alpha\}$. 

It turns out that if the graph $G$ is acyclic, then there is a topological ordering on its vertices, which induces an ordering on $\mathcal{G}$ with respect to which Gaussian elimination can be applied sequentially along the elements of $\mathcal{G}$, following that order.

\begin{definition}
A directed graph is \textit{acyclic} if it has no directed cycles.  
\end{definition} 

\begin{definition}
A topological ordering on a directed graph is a linear ordering $<$ of vertices such that $u\to v$ implies $u < v$. 
\end{definition}

It is well known that the existence of a topological order on a directed graph is equivalent to it being acyclic. 
\begin{theorem}{\cite{topological-sort}} \label{t.acyclic-ordering}
A directed graph is acyclic if and only if there is a topological ordering on its vertices. 
\end{theorem}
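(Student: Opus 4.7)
The plan is to prove the two directions separately, with the forward (topological $\Rightarrow$ acyclic) direction being essentially a one-line contradiction and the converse requiring a constructive argument by induction on the number of vertices.

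For the easy direction, I would assume there exists a topological ordering $<$ on the vertex set $V$ and suppose for contradiction that $G$ contains a directed cycle $v_0 \to v_1 \to \cdots \to v_{k-1} \to v_0$. Applying the topological condition edge by edge yields $v_0 < v_1 < \cdots < v_{k-1} < v_0$, contradicting irreflexivity of $<$. So no directed cycle can exist.

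For the converse, I would argue by induction on $|V|$. The base case $|V|=1$ is trivial. For the inductive step, the key lemma is that every finite nonempty acyclic directed graph contains a \emph{sink}, i.e., a vertex with no outgoing edges (equivalently, after reversing edges, a source). To see this, start at any vertex and follow outgoing edges; since $|V|$ is finite, this walk must either terminate at a sink or revisit a vertex, and the latter would produce a directed cycle, contradicting acyclicity. Let $v^*$ be such a sink. The induced subgraph $G' := G \setminus \{v^*\}$ is still acyclic (since removing a vertex cannot create a cycle), and by the inductive hypothesis $G'$ admits a topological ordering $<'$ on $V \setminus \{v^*\}$. Extend $<'$ to $V$ by declaring $u < v^*$ for every $u \in V \setminus \{v^*\}$. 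Any edge in $G$ is either entirely in $G'$ (handled by $<'$) or has the form $u \to v^*$ (handled by the extension, since $v^*$ has no outgoing edges). This yields the required topological ordering.

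The main obstacle is producing the sink in the inductive step, but this follows cleanly from the finiteness of $V$ combined with the acyclicity hypothesis, so no genuine difficulty arises. Since this is a classical fact from graph theory appearing in standard references such as \cite{topological-sort}, I would either cite it directly or include the short argument above for self-containment.
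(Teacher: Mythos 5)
Your proof is correct. The paper does not actually prove Theorem \ref{t.acyclic-ordering}; it simply cites it as a classical fact from \cite{topological-sort}, so there is no in-paper argument to compare against. Your two directions are the standard ones: the forward implication via the contradiction $v_0 < v_1 < \cdots < v_0$, and the converse by induction on $|V|$ using the existence of a sink in a finite nonempty acyclic digraph (obtained by following outgoing edges until the walk must either stop or repeat a vertex). The extension step is also handled correctly: since $v^*$ is a sink, every edge incident to it points into it, so placing $v^*$ last is consistent with the requirement that $u \to v$ implies $u < v$. The only implicit hypothesis worth flagging is finiteness of the vertex set, which your induction and your sink-finding argument both use; this is harmless here because the graph $G$ in the paper is built from the finite set $\mathcal{G}$ of distinguished isomorphisms, but the statement as written would require a different argument for infinite directed graphs.
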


Now suppose that $G$ is acyclic, and therefore, by Theorem \ref{t.acyclic-ordering}, the set of vertices $V(G)$ admits a linear ordering $<$ . Number the vertices $\alpha_1, \alpha_2, \ldots,$ with respect to this linear ordering, so that $\alpha_{i} < \alpha_{i+1}$ for $1\leq i \leq |V(G)|$. We show that we can iterate Gaussian elimination on the elements of $\G$.

\begin{lemma} \label{l.whencycle}
Let $\CKh$ be a chain complex. Let $\G$ be a set of isomorphisms in $\CKh$ and let $G$ be the corresponding graph associated to $\G$, as constructed in Definition \ref{d.graphisocollection}. Let $\alpha\in \G$ and denote by $\CKh_\alpha$ the new chain complex obtained from $\CKh$ by applying Gaussian elimination along $\alpha$. Consider the new graph $G_\alpha$ constructed for $\CKh_\alpha$. If $G_\alpha$ has a cycle, then $G$ has a cycle. 
\end{lemma}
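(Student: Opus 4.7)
The plan is to prove this in the direction stated: from any directed cycle in $G_\alpha$, I construct a closed directed walk in $G$ and then extract from it a directed cycle. The two tools I need are the explicit formula in Lemma~\ref{l.geliminate} describing how components of the differential change under Gaussian elimination, together with the standard graph-theoretic fact that any closed directed walk in a digraph contains a directed cycle.

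Write $\alpha\maps b_1 \to b_2$ for the isomorphism being eliminated. By Lemma~\ref{l.geliminate}, the complex $\CKh_\alpha$ is obtained from $\CKh$ by deleting $b_1, b_2$ and replacing every direct component $\epsilon\maps D \to E$, where $D$ is a neighbor of $b_1$ and $E$ is a neighbor of $b_2$, with $\epsilon - \gamma\phi^{-1}\delta$, where $\delta\maps D \to b_2$, $\gamma\maps b_1 \to E$, and $\phi = \alpha$. All other components of the differential are unchanged. Thus, if $\alpha' \to \alpha''$ is an edge of $G_\alpha$, then the component from the source of $\alpha'$ to the target of $\alpha''$ in $\CKh_\alpha$ is nonzero and equals $\epsilon - \gamma\phi^{-1}\delta$, with the convention that $\gamma=0$ or $\delta=0$ if the corresponding component in $\CKh$ was already zero.

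Now I split into cases. If $\epsilon \neq 0$, then the same single component provides a direct edge $\alpha' \to \alpha''$ in $G$. Otherwise $\gamma\phi^{-1}\delta \neq 0$ forces both $\gamma \neq 0$ and $\delta \neq 0$; these yield the edges $\alpha' \to \alpha$ (via $\delta$) and $\alpha \to \alpha''$ (via $\gamma$) in $G$. Hence every edge of $G_\alpha$ lifts to a directed path of length one or two in $G$.

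Concatenating these lifts along a directed cycle $\alpha_1 \to \alpha_2 \to \cdots \to \alpha_k \to \alpha_1$ of $G_\alpha$ produces a closed directed walk $W$ in $G$. Finally I invoke the standard induction on length: if no vertex of $W$ is repeated (apart from the start equaling the end), then $W$ itself is a directed cycle; otherwise a repeated vertex lets one cut out a strictly shorter closed sub-walk, and the induction terminates at a simple directed cycle. This cycle lies in $G$, which is what we needed. The only subtlety is the possibility that $\epsilon = \gamma\phi^{-1}\delta$, which kills the edge $\alpha'\to \alpha''$ in $G_\alpha$, but this only weakens the hypothesis and has no effect on the lifting direction we require; so I do not expect any genuine obstacle.
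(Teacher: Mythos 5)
Your proposal is correct and follows essentially the same route as the paper: the paper likewise observes that each edge of $G_\alpha$ either persists unchanged in $G$ or arises from the correction term $\gamma\phi^{-1}\delta$, in which case it is replaced by the length-two path through the deleted vertex $\alpha$, yielding a closed walk and hence a cycle in $G$. Your explicit extraction of a simple directed cycle from the resulting closed walk is a small point the paper glosses over, but it is not a substantive difference.
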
 
\begin{proof}
From $G$ to $G_\alpha$, we delete a vertex $v$ corresponding to $\alpha$, then we add a new edge between two vertices $v', v'' \in V(G_\alpha) = V(G) \setminus v$, if the following maps exist as in Figure \ref{f.edgeinGa}. The edge $(v', v'')$ in $G_\alpha$ corresponds to the composition $ \varphi\circ\alpha^{-1} \circ \varphi'$. 
\begin{figure}[H]
\begin{center}
\begin{tikzcd}
   (s', \epsilon') 
        \arrow{r}{\alpha'} \arrow{dr}[swap, near start]{\varphi'}
        & (t', \delta') \\
   (s, \epsilon) 
        \arrow{r}{\alpha} \arrow{dr}[swap, near start]{\varphi}
        & (t, \delta) \\
   (s'', \epsilon'')  
        \arrow{r}{\alpha''}  
        & (t'', \delta'')
\end{tikzcd} 
\end{center}
\caption{\label{f.edgeinGa} A new edge $(v', v'')$ in $G_\alpha$ from the edges in $G$.} 
\end{figure} 
Consider a cycle $C: v_1, \ldots, v_k = v_1$ in $G_\alpha$. If the edges in $G_\alpha$ are the same as in $G$, meaning the component of the differentials labeling each edge is the same, then the sequence of vertices $v_1, \ldots, v_k$ is a cycle in $G$. See Figure \ref{f.cycle} for a generic illustration of the cycle in $G_\alpha$. 
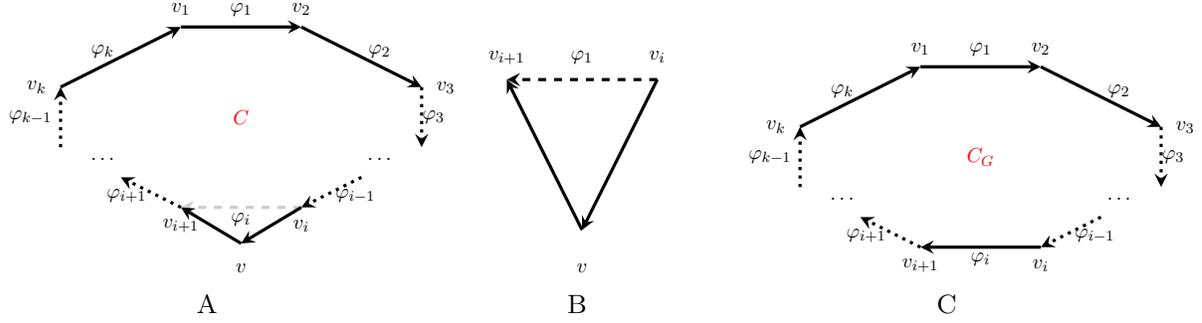
\begin{figure}[H]
 \centering
 \begin{subfigure}{0.33\textwidth}
\centering 
\begin{tikzpicture}[every node/.style={scale=.7}, scale=.8]]

\node at (-1,4.3){$v_1$};
\node at (1,4.3){$v_2$};
\node at (3.4,3){$v_3$};
\node at (0,4.3){$\varphi_1$};
\node at (2.3,3.6){$\varphi_2$};
\node at (-2.3,3.6){$\varphi_k$};
 \draw[very thick, -stealth] (-1,4)--(1,4);
\draw[very thick, -stealth] (1, 4) -- (3, 3);
\node at (-3.4,3){$v_k$};
\draw[very thick, -stealth](-3, 3)--(-1, 4);
\draw[very thick,dotted, -stealth](-3, 2)--(-3, 3);
\node at (-1.9, 1.2){$\varphi_{i+1}$};
\node at (0, 0.8){$\varphi_{i}$};
\node at (1.9, 1.2){$\varphi_{i-1}$};
\draw[very thick,dotted, -stealth](3, 3)--(3, 2);
\node at (3.2, 2.5){$\varphi_{3}$};
\node at (-3.5, 2.5){$\varphi_{k-1}$};

\draw[very thick, -stealth,dashed, gray!40](1,1)--(-1,1);

\node at (-1,0.7){$v_{i+1}$};
\node at (1,0.7){$v_{i}$};
\node at (0,0){$v$};
\draw[very thick, -stealth](1,1)--(0,0.4);
\draw[very thick, -stealth](0,0.4)--(-1,1);
\draw[very thick,dotted, -stealth](2, 1.5)--(1,1);
\draw[very thick,dotted, -stealth](-1, 1)--(-2, 1.5);
\node at (2.3, 1.8){$\ldots$};
\node at (-2.3, 1.8){$\ldots$};
\node[red] at (0,2.5){$C$};
\end{tikzpicture}\caption{}\label{f.cycle}
 \end{subfigure}
 \begin{subfigure}{0.25\textwidth}
 \centering 
\begin{tikzpicture}[every node/.style={scale=.7}, scale=1]]

\node at (-1,4.3){$v_{i+1}$};
\node at (1,4.3){$v_i$};
\node at (0,1.5){$v$};
\node at (0,4.3){$\varphi_1$};
 \draw[very thick, -stealth, dashed] (1,4)--(-1,4);
\draw[very thick, -stealth] (1, 4) -- (0, 2);
\draw[very thick, -stealth] (0, 2)--(-1,4);
\end{tikzpicture}
\caption{}\label{f.seqvertices}
 \end{subfigure}
  \begin{subfigure}{0.33\textwidth}
\centering 
\begin{tikzpicture}[every node/.style={scale=.7}, scale=.8]]

\node at (-1,4.3){$v_1$};
\node at (1,4.3){$v_2$};
\node at (3.4,3){$v_3$};
\node at (0,4.3){$\varphi_1$};
\node at (2.3,3.6){$\varphi_2$};
\node at (-2.3,3.6){$\varphi_k$};
 \draw[very thick, -stealth] (-1,4)--(1,4);
\draw[very thick, -stealth] (1, 4) -- (3, 3);
\node at (-3.4,3){$v_k$};
\draw[very thick, -stealth](-3, 3)--(-1, 4);
\draw[very thick,dotted, -stealth](-3, 2)--(-3, 3);
\node at (-1.9, 1.2){$\varphi_{i+1}$};
\node at (0, 0.8){$\varphi_{i}$};
\node at (1.9, 1.2){$\varphi_{i-1}$};
\draw[very thick,dotted, -stealth](3, 3)--(3, 2);
\node at (3.2, 2.5){$\varphi_{3}$};
\node at (-3.5, 2.5){$\varphi_{k-1}$};

\node at (-1,0.7){$v_{i+1}$};
\node at (1,0.7){$v_{i}$};
\draw[very thick, -stealth](1,1)--(-1,1);
\draw[very thick,dotted, -stealth](2, 1.5)--(1,1);
\draw[very thick,dotted, -stealth](-1, 1)--(-2, 1.5);
\node at (2.3, 1.8){$\ldots$};
\node at (-2.3, 1.8){$\ldots$};
\node[red] at (0,2.5){$C_{G}$};
\end{tikzpicture}\caption{}\label{f.oancycle}
 \end{subfigure}
\caption{ (A): A cycle $C$ in $G_\alpha$ ,  (B): a sequence of vertices $v_i, v, v_{i+1}$ in $G$ , and (C): the cycle $C_G$ in $G$ corresponding to $C$ in $G_\alpha$. }
\end{figure}
Otherwise, suppose there is an edge between $v_i$ and  $v_{i+1}$ whose corresponding map $\varphi$ in $G$ is zero, but $\varphi_\alpha \in G_\alpha$ is nonzero. Then it must have come from applying a Gaussian elimination isomorphism along $\alpha$, as in Figure \ref{f.edgeinGa}. This implies that there is a vertex $v$ in $G$ and two edges $(v_i, v)$ and $(v, v_{i+1})$. See Figure \ref{f.seqvertices} for a local picture involving just $v_i, v_{i+1}, v$ and Figure \ref{f.oancycle} for an illustration involving a cycle in $G$. Replacing such edge $(v_i, v_{i+1})$ in $C$ with $(v_i, v)$ and $(v, v_{i+1})$, recovers a cycle $C_G$ in the graph $G$. 
\end{proof} 

\begin{proposition}

Let $1\leq k \leq |V(G)|$ and 
$\CKh^k(\beta)$ be defined as the chain complex obtained from $CKh(\beta)$ by applying Gaussian eliminations along the first $k$ isomorphisms in $\G$ with respect to the linear order $<$. That is,
\[ \CKh^k(\beta):= (((\CKh_{\alpha_1})_{\alpha_2})\cdots) _{\alpha_k}(\beta).  \]  
Suppose $G$ is acyclic and let $f\in \G \setminus \{\alpha_1, \alpha_2, \ldots, \alpha_k\}.$ Then the component of the differential $f^{k}$ in $\CKh^k(\beta)$ with the same source and target as $f$ is equal to $f$. More precisely, 
\[ f^{k} = f. \] 
\end{proposition}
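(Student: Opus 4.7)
The plan is to prove this by induction on $k$, using Lemma \ref{l.whencycle} to maintain acyclicity at every intermediate stage, and Lemma \ref{l.gacyclic} to argue that each individual Gaussian elimination preserves the remaining components of the differential. The topological ordering itself plays no special role beyond ensuring that the elements of $\G$ have been put in some order; all that will really be needed is acyclicity of the graph $G$.

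The base case $k=0$ is trivial since $\CKh^0(\beta) = \CKh(\beta)$ and so $f^0 = f$. For the inductive step, assuming the statement for some $k < |V(G)|$, I first need to argue that the graph $G^k$ attached to the complex $\CKh^k(\beta)$ and the remaining set of isomorphisms $\G \setminus \{\alpha_1, \ldots, \alpha_k\}$ is acyclic. This follows by iterating Lemma \ref{l.whencycle} in its contrapositive form: since $G^0 = G$ is acyclic and $G^{j+1}$ is the graph attached to $\CKh^j(\beta)$ after Gaussian elimination along $\alpha_{j+1}$, the lemma rules out cycles in $G^{j+1}$ whenever $G^j$ was acyclic.

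Next, I need to check that $\alpha_{k+1}$ is still available as an isomorphism in $\CKh^k(\beta)$, so that the next Gaussian elimination step makes sense. Since $\alpha_{k+1} \in \G \setminus \{\alpha_1, \ldots, \alpha_k\}$, the inductive hypothesis applied to $f = \alpha_{k+1}$ gives $\alpha_{k+1}^k = \alpha_{k+1}$, which is by construction an isomorphism. Now I can invoke Lemma \ref{l.gacyclic} on the complex $\CKh^k(\beta)$ with the set of isomorphisms $\mathcal{F} = \G \setminus \{\alpha_1, \ldots, \alpha_k\}$ and distinguished element $\alpha_{k+1} \in \mathcal{F}$. Since $G^k$ has no $2$-cycles, the lemma asserts that for every $f \in \mathcal{F} \setminus \{\alpha_{k+1}\} = \G \setminus \{\alpha_1, \ldots, \alpha_{k+1}\}$, the component of the differential in $\CKh^{k+1}(\beta) = (\CKh^k(\beta))_{\alpha_{k+1}}$ corresponding to $f$ coincides with the component $f^k$ in $\CKh^k(\beta)$. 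Combining with the inductive hypothesis $f^k = f$ gives $f^{k+1} = f$, completing the induction.

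The main subtlety I anticipate is bookkeeping rather than mathematics: making sure that the graphs $G^j$ really are constructed using the correct (inductively unchanged) components of the differentials, so that the hypothesis of Lemma \ref{l.whencycle} is literally applicable at each stage. The argument above handles this because the inductive hypothesis $f^j = f$ guarantees that every distinguished isomorphism in $\G \setminus \{\alpha_1, \ldots, \alpha_j\}$ survives with the same underlying cobordism at stage $j$, and hence $G^j$ is defined from the same set-theoretic data as in the original construction (Definition \ref{d.graphisocollection}) but within the complex $\CKh^j(\beta)$.
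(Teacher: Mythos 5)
Your proposal is correct and follows essentially the same route as the paper: induction on $k$, with Lemma \ref{l.whencycle} (in contrapositive, iterated) guaranteeing that each intermediate graph remains acyclic and hence free of $2$-cycles, and Lemma \ref{l.gacyclic} then showing that each successive Gaussian elimination leaves the remaining components of the differential untouched. If anything, your write-up is more careful than the paper's terse version, since you explicitly verify that $\alpha_{k+1}$ survives as an isomorphism at stage $k$ and that the intermediate graphs are built from the inductively unchanged differential components.
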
 

\begin{proof}
The proof is by induction on $k$. The base case $k =1$ follows from Lemma \ref{l.whencycle}. Consider $k + 1$. By Lemma \ref{l.gacyclic}, if $G_{k-1}$ has no 2-cycles, then $f = f^k$. Then, Lemma \ref{l.whencycle} tells us that if $G$ is acyclic, then $G_{k-1}$ has no 2-cycles. 
\end{proof}

\subsection{Existence of a linear order on the set $\mathcal{G}$} \label{ss.linear_order}
In this section we fix $n$ and specialize $\beta = \ftbraid^k_n$ for the rest of the paper. 

From the previous section, it suffices to show that $G$ is acyclic in order to apply Gaussian elimination along all the distinguished isomorphisms in $\mathcal{G}$. This is the goal of the present section. We begin by understanding a \textit{connecting map} between two vertices in $G$ corresponding to Gaussian eliminations.

Recall that for an enhanced Kauffman state represented by $(w, \epsilon)$, the word $w$ is a barred braid word that encodes the choice of a Kauffman state $\sigma$ of the 1-resolution on a crossing $\sigma_i$ by barring the digit $i$, and the 0-resolution by an unbarred digit. We write $w= w'$ for two Kauffman states on the same braid when all the barrings and their locations are the same. 

\begin{definition}[Active crossing]
Let $\varphi: (s, \epsilon) \rightarrow (t, \delta)$ be a component of the differential $d$ in the chain complex $\CKh(\beta)$. The \textit{active crossing} $\AC(\varphi)$ of $\varphi$ is the crossing on which $d$ changes the $0$-resolution on the corresponding digit in $s$ to the $1$-resolution on the corresponding digit in $t$ via a saddle.
\end{definition} 

We keep track of the locations of subwords in $s$ and $t$ through the following definitions/notations: 

\begin{itemize}
\item \textbf{Support of $\alpha\in \G$}: If $(s, \epsilon)$ be the source of a distinguished isomorphism $\alpha \in \G$ chosen by Algorithm \ref{ss.algorithm}, then it contains a subword $w = \bar{i}xi$ or $\bar{i}x\bar{i}$ on which $\alpha$ is supported, depending on whether $\G$ is of type G1 or G2. That is, $\AC(\alpha) = i$ or $\AC(\alpha) = i+1$, respectively. The shortest such subword $w = \overline{i}xi$ is called the \textit{support of $\alpha$}.
\item $\mathbf{\init(w)}$: For $w = \bar{i}xi$ or $w = \bar{i}x\bar{i}$, let $\init(w)$ be the position of the first $\bar{i}$ in $s$ and $\fin(w)$ be the position of the second (barred or unbarred) $i$ in $s$ immediately to the right of $x$. For $\alpha\in \G$, we define $\init(\alpha)$ to be $\init(w)$, where $w$ is the support of $\alpha$ and $\fin(\alpha)$  is $\fin(w)$.
\item $\mathbf{a<b}$: Let $a, b$ be two digits in a barred braid word $w$, we write $a < b$ to mean that the position of $a$ in the braid word is to the left of the position of $b$.
\item $\mathbf{w[m, n]}$: Let $w$ be a barred braid word representing a Kauffman state. The subword $w[m, n]$ is the subword from the $m$th position to the $n$th position, including the $m$th digit and the $n$th digit, reading from left to right. We will write $w[m]$ to mean $w[m, m]$.
\item $\mathbf{s\subseteq w}$: We use $s\subseteq w$ to denote a subword of a barred braid word, and $s\subset w$ to denote a proper subword. 
\end{itemize}

Our first result of this section characterizes a connecting map $\varphi:\alpha\rightarrow \alpha'$ in relation to the supports of $\alpha$ and $\alpha'\in \G$. 

\begin{definition}[Obstructing a source and making a target] Consider a connecting map $\varphi: (s, \epsilon) \rightarrow (t', \delta')$ as shown in Figure \ref{f.generic_connecting}.
\begin{figure}[H]
\begin{center}
\begin{tikzcd}
    (s, \varepsilon) 
        \arrow{r}{\alpha} \arrow{dr}{\varphi}
        & (t, \delta) \\
    (s', \varepsilon') 
        \arrow{r}{\alpha'} 
        & (t', \delta') 
\end{tikzcd} 
\end{center}
\caption{Generic connecting map $\varphi$.}
\label{f.generic_connecting}
\end{figure} 
\begin{itemize} 
\item Let $w$ be the support of the Gaussian elimination isomorphism $\alpha$. We say that \textit{$\varphi$ obstructs $\alpha$} if $\AC(\varphi) \in w$ and $t'[\init(w), \fin(w)]$ is not the source of $\alpha$ 

\item  Let $w'$ be the support of the Gaussian elimination isomorphism $\alpha'$. We say that \textit{$\varphi$ makes $\alpha'$} if $\AC(\varphi) \in t'[\init(w'), \fin(w')]$, and $s[\init(w'), \fin(w')]$ is not the target of $\alpha'$. 
\end{itemize} 
\end{definition} 

\begin{lemma} \label{l.connecting} 
Suppose we are given a connecting map $\varphi:(s, \epsilon) \mapsto (t', \delta')$ as shown in Figure \ref{f.generic_connecting}.
Then $\AC(\varphi) \not= \AC(\alpha')$ and at least one of the following holds: 
\begin{enumerate}
\item $\varphi$ obstructs $\alpha$. In this case $\init(\alpha) < \init(\alpha')$. 
\item $\varphi$ makes $\alpha'$. In this case $\init(\alpha') < \init(\alpha)$.
\end{enumerate} 

\begin{proof} 
First, we have $\AC(\varphi) \not= \AC(\alpha')$; otherwise it would imply $(s, \epsilon) = (s', \epsilon')$, contradicting the well-definedness of Algorithm \ref{ss.algorithm}. 

Suppose that $\varphi$ obstructs $\alpha$. Then, we need to show $\init(\alpha) < \init(\alpha')$. Suppose on the contrary that $\init(\alpha') < \init(\alpha)$. Consider $w = s[\init(\alpha), \fin(\alpha)]$ and $w' = s[\init(\alpha'), \fin(\alpha')]$.  The subword $w'$ is the target of a distinguished Gaussian elimination isomorphism $\alpha'$. This means that $w' = \bar{i}x\bar{i}$ or $w' = \bar{i}\overline{i+1}x\bar{i}$. In the first case, the digits $i+1, i \in x$ are unbarred, and in the second case, the digit $i-1 \in x$ is unbarred. Since $\init(\alpha') < \init(\alpha)$, Algorithm \ref{ss.algorithm} would not choose $\alpha$ as the distinguished Gaussian elimination isomorphism, leading to a contradiction.

Next, assume $\varphi$ does not obstruct $\alpha$. We need to prove that $\init(\alpha') < \init(\alpha)$ and that $\varphi$ makes $\alpha'$. By definition, one possibility is that $\AC(\varphi) \notin w$. Here $w$ is the support of $\alpha$. This means that $t'[\init(w), \fin(w)]$ supports a Gaussian elimination isomorphism, but it is somehow not chosen to be in $\G$. Let $w' \subseteq t'$ be the support of $\alpha'$. The position of $\init(w')$ in the braid $\beta$ must thus be to the left of $\init(w)$ in $\beta$. That is, $\init(\alpha') < \init(\alpha)$. Otherwise, Algorithm \ref{ss.algorithm} would not choose $\alpha'$. 

Let $w'(s) = s[\init(w'), \fin(w')]$. If $w'(s) = t'[\init(w'), \fin(w')]$, then $w'(s)$ is the target in $s$ of the distinguished Gaussian elimination isomorphism $\alpha'$. Since $\init(w') < \init(w)$, we have $\init(w'(s)) < \init(w)$. This is a contradiction to the assumption that $\alpha$ is a distinguished Gaussian elimination isomorphism.  Thus, $w'(s) \not= t'[\init(w'), \fin(w')]$ and we can conclude that $\AC(\varphi) \in t'[\init(w'), \fin(w')]$. Hence $\varphi$ makes $\alpha'$. 
\end{proof} 
\end{lemma}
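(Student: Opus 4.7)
The plan is to exploit the leftmost-first scanning property of Algorithm \ref{ss.algorithm}: since that algorithm chooses its distinguished isomorphism at the leftmost subword presenting a source- or target-pattern, any positional hypothesis contradicting ``leftmost wins'' yields a contradiction. For the first assertion, I would settle $\AC(\varphi) \neq \AC(\alpha')$ by contradiction: if these active crossings coincided, then $\varphi$ and $\alpha'$ would both be single-bit-flips of a $0$ into a $1$ at the same position, followed by the same merge-or-split producing $(t', \delta')$, which forces $(s, \epsilon) = (s', \epsilon')$. But then one enhanced Kauffman state carries two distinct distinguished isomorphisms $\alpha$ and $\alpha'$, contradicting Lemma \ref{lem:unique-distinguished-isom}.

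For the dichotomy I would split on whether $\AC(\varphi) \in w$, where $w$ is the support of $\alpha$ in $s$. \textbf{Case A:} Suppose $\varphi$ obstructs $\alpha$, so $\AC(\varphi) \in w$. Assume for contradiction $\init(\alpha') \leq \init(\alpha)$. Since $\AC(\varphi) \geq \init(\alpha) > \init(\alpha')$, the subword $w' := s[\init(\alpha'), \fin(\alpha')]$ coincides with the target $t'[\init(\alpha'), \fin(\alpha')]$ of $\alpha'$ (in the main situation where the target range lies strictly left of $\AC(\varphi)$), and in particular carries the form $\bar{i} x \bar{i}$ or the G2 variant $\bar{i}\,\overline{(i+1)}\,x\,\bar{i}$ dictated by Definition \ref{d.gepair}. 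Algorithm \ref{ss.algorithm}'s scan of $s$ would therefore trigger a target- or source-match strictly left of $\init(\alpha)$, contradicting its choice of $\alpha$ with support starting at $\init(\alpha)$.

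\textbf{Case B:} Suppose $\varphi$ does not obstruct $\alpha$. Then $\AC(\varphi) \notin w$ (the alternative branch of the ``does not obstruct'' disjunction is vacuous, since flipping a bit inside $w$ necessarily destroys the source pattern), so $s$ and $t'$ coincide on $[\init(\alpha), \fin(\alpha)]$; in particular $t'[\init(\alpha), \fin(\alpha)]$ is the source pattern of $\alpha$. Applying leftmost-scan to Algorithm \ref{ss.algorithm}'s run on $t'$—which selected $\alpha'$ rather than a source-isomorphism at position $\init(\alpha)$—forces $\init(\alpha') < \init(\alpha)$. To conclude that $\varphi$ makes $\alpha'$, it remains to rule out $\AC(\varphi) \notin [\init(\alpha'), \fin(\alpha')]$: otherwise $s[\init(\alpha'), \fin(\alpha')] = t'[\init(\alpha'), \fin(\alpha')]$ would be the full target of $\alpha'$ already present in $s$ at a position strictly left of $\init(\alpha)$, contradicting the choice of $\alpha$ on $s$ by the same leftmost-scan argument as in Case A. Hence $\AC(\varphi) \in t'[\init(\alpha'), \fin(\alpha')]$ and $s[\init(\alpha'), \fin(\alpha')]$ is not the target of $\alpha'$, which is precisely the ``making'' condition.

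The main obstacle I expect is verifying that after the single bit-flip at $\AC(\varphi)$, the structural constraints of Definition \ref{d.gepair}—the specific unbarred digits between the two boundary $\bar{i}$'s forcing a closed component, together with the required $\pm$ enhancement on that component—persist in $s$ (for Case A) or $t'$ (for Case B) well enough for Algorithm \ref{ss.algorithm} to detect the offending pattern at $\init(\alpha')$ or $\init(\alpha)$. Tracking this simultaneously in the G1 and G2 variants, and in the degenerate boundary situation where the support ranges $[\init(\alpha), \fin(\alpha)]$ and $[\init(\alpha'), \fin(\alpha')]$ actually overlap, is the delicate combinatorial bookkeeping that the proof must manage.
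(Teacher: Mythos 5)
Your proposal follows essentially the same route as the paper's proof: the distinctness $\AC(\varphi)\neq\AC(\alpha')$ via uniqueness of the source state, the obstructing case handled by exhibiting the target pattern of $\alpha'$ inside $s$ to the left of $\init(\alpha)$ and invoking the leftmost-scan of Algorithm \ref{ss.algorithm}, and the non-obstructing case reduced to $\AC(\varphi)\notin w$ followed by the two positional contradictions that force $\init(\alpha')<\init(\alpha)$ and $\AC(\varphi)\in t'[\init(\alpha'),\fin(\alpha')]$. Your explicit remark that the second branch of ``does not obstruct'' is vacuous, and your flagging of the overlapping-support subtlety, are points the paper passes over silently, but they do not change the argument.
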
 

By Lemma \ref{l.connecting}, we have the following possibilities for the local shapes of the Kauffman states for a connecting map $\varphi$, depending on the type, G1 or G2, of the distinguished Gaussian elimination isomorphisms $\alpha, \alpha'$ that $\varphi$ obstructs or makes. 
\subsubsection{Possible shapes of a connecting map.}
With the same notation as stated in Lemma \ref{l.connecting}, we illustrate the local pictures of $\varphi$ corresponding to the cases of the lemma. In Figures \ref{f.config1ab} - \ref{f.config4}, the red pair of saddles corresponds to $\varphi$'s barring of a crossing in $s$. 
\begin{itemize}
\item \textbf{The connecting map $\varphi$ obstructs $\alpha$, where $\alpha$ is a G1 Gaussian elimination isomorphism (Figure \ref{f.config1ab})}. Suppose $w\subseteq s$ is the subword that supports $\alpha$. There are two ways $\varphi$ can modify $w$ so that it is no longer the support, or source, of a G1 Gaussian elimination isomorphism. The two possibilities (A) and (B) are shown in Figure \ref{f.config1ab}, where we show the subword that $\varphi$ modifies for each scenario. 

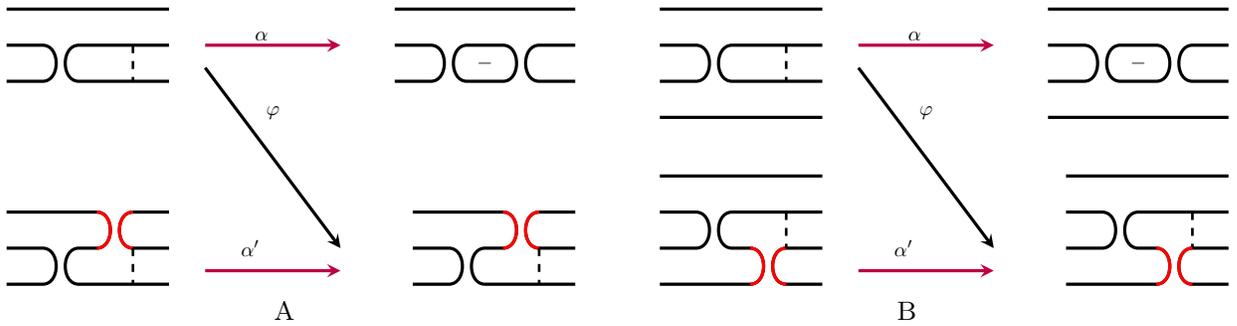
\begin{figure}[H]
 \begin{subfigure}{.45\textwidth}
 \centering
\begin{tikzpicture}[every node/.style={scale=.7}, scale=.6]

\begin{scope}[shift={(-6,0)}, rotate=-90, scale=.8] 

\draw[very thick] (-1, -.5) to (-1, 4);
\draw[very thick] (1,4) to (1,1.5) .. controls ++(0,-.5) and ++(0,-.5) .. (0,1.5) to (0,4);
\draw[line width = 1pt, dashed] (0,3)--(1,3);
\draw[very thick] (0,-0.5)  to (0,0.5) .. controls ++(0,.5) and ++(0,.5) .. (1,0.5)--(1,-0.5);
 \end{scope}

\node at (-0.75, 0.2){$\alpha$};
 \draw[very thick, purple, -stealth] (-2,0)--(1,0);
\draw[very thick, -stealth] (-2, -.5) -- (1, -4.5);
\node at (-0.5, -1.5){$\varphi$};
 \draw[very thick, purple, -stealth] (-2,-5)--(1, -5);
\node at (-1, -4.5){$\alpha'$};

\begin{scope}[shift={(5,0)},rotate=-90, scale=.8]
\draw[very thick] (-1, -3.5) to (-1, 1.5);

\draw[very thick] (1,1.5) to (1,0.5) .. controls ++(0,-.5) and ++(0,-.5) .. (0,0.5) to (0,1.5);
\draw[very thick] (1,-1.5) to (1,-0.5) .. controls ++(0,.5) and ++(0,.5) .. (0,-0.5)--(0,-1.5) .. controls ++(0,-0.5) and ++(0,-.5) .. (1,-1.5);
\node at (0.5, -1){$-$}; 
\draw[very thick] (0,-3.5)  to (0,-2.5) .. controls ++(0,.5) and ++(0,.5) .. (1,-2.5)--(1,-3.5);
\end{scope}
  
\begin{scope}[shift={(-6,-4.5)}, rotate=-90, scale=.8]  

\draw[very thick] (-1,4) to (-1,3) .. controls ++(0,-.5) and ++(0,-.5) .. (0,3) to (0,4);
\draw[very thick] (-1,-0.5)  to (-1,2) .. controls ++(0,.5) and ++(0,.5) .. (0,2)--(0,1.5) .. controls ++(0,-0.5) and ++(0,-.5) .. (1,1.5)--(1,4);

\draw[very thick, red] (-1,3) .. controls ++(0,-.5) and ++(0,-.5) .. (0,3);
\draw[very thick, red] (-1,2) .. controls ++(0,.5) and ++(0,.5) .. (0,2);

\draw[very thick] (0,-0.5)  to (0,0.5) .. controls ++(0,.5) and ++(0,.5) .. (1,0.5)--(1,-0.5);

\draw[line width = 1pt, dashed] (0,3)--(1,3);

 \end{scope}

\begin{scope}[shift={(3,-4.5)}, rotate=-90, scale=.8]  

\draw[very thick] (-1,4) to (-1,3) .. controls ++(0,-.5) and ++(0,-.5) .. (0,3) to (0,4);

\draw[very thick] (-1,-0.5)  to (-1,2) .. controls ++(0,.5) and ++(0,.5) .. (0,2)--(0,1.5) .. controls ++(0,-0.5) and ++(0,-.5) .. (1,1.5)--(1,4);

\draw[very thick] (0,-0.5)  to (0,0.5) .. controls ++(0,.5) and ++(0,.5) .. (1,0.5)--(1,-0.5);
\draw[very thick, red] (-1,3) .. controls ++(0,-.5) and ++(0,-.5) .. (0,3);
\draw[very thick, red] (-1,2) .. controls ++(0,.5) and ++(0,.5) .. (0,2);

\draw[line width = 1pt, dashed] (0,3)--(1,3);

 \end{scope}
   \end{tikzpicture}  \subcaption{}
   \end{subfigure} \hspace{1cm}
  \begin{subfigure}{.4\textwidth}
\begin{tikzpicture}[every node/.style={scale=.7}, scale=.6]

\begin{scope}[shift={(-6,0)}, rotate=-90, scale=.8] 
\draw[very thick] (-1, -.5) to (-1, 4);
\draw[very thick] (2, -.5) to (2, 4);
\draw[very thick] (1,4) to (1,1.5) .. controls ++(0,-.5) and ++(0,-.5) .. (0,1.5) to (0,4);
\draw[line width = 1pt, dashed] (0,3)--(1,3);
\draw[very thick] (0,-0.5)  to (0,0.5) .. controls ++(0,.5) and ++(0,.5) .. (1,0.5)--(1,-0.5);
 \end{scope}
\node at (-0.75, 0.2){$\alpha$};
 \draw[very thick, purple, -stealth] (-2,0)--(1,0);
\draw[very thick, -stealth] (-2, -.5) -- (1, -4.5);
\node at (-0.5, -1.5){$\varphi$};
 \draw[very thick, purple, -stealth] (-2,-5)--(1, -5);
\node at (-1, -4.5){$\alpha'$};

\begin{scope}[shift={(5,0)},rotate=-90, scale=.8]
\draw[very thick] (-1, -3.5) to (-1, 1.5);
\draw[very thick] (2, -3.5) to (2, 1.5);

\draw[very thick] (1,1.5) to (1,0.5) .. controls ++(0,-.5) and ++(0,-.5) .. (0,0.5) to (0,1.5);
\draw[very thick] (1,-1.5) to (1,-0.5) .. controls ++(0,.5) and ++(0,.5) .. (0,-0.5)--(0,-1.5) .. controls ++(0,-0.5) and ++(0,-.5) .. (1,-1.5);
\node at (0.5, -1){$-$}; 
\draw[very thick] (0,-3.5)  to (0,-2.5) .. controls ++(0,.5) and ++(0,.5) .. (1,-2.5)--(1,-3.5);
\end{scope}
  
\begin{scope}[shift={(-6,-4.5)}, rotate=-90, scale=.8]  

\draw[very thick] (-2, -.5) to (-2, 4);
\draw[very thick] (0,4) to (0,3) .. controls ++(0,-.5) and ++(0,-.5) .. (1,3) to (1,4);
\draw[very thick] (1,-0.5)  to (1,2) .. controls ++(0,.5) and ++(0,.5) .. (0,2)--(0,1.5) .. controls ++(0,-0.5) and ++(0,-.5) .. (-1,1.5)--(-1,4);
 \draw[very thick, red] (1,2) .. controls ++(0,.5) and ++(0,.5) .. (0,2);
\draw[very thick, red] (0,3) .. controls ++(0,-.5) and ++(0,-.5) .. (1,3);

\draw[very thick] (-1,-0.5)  to (-1,0.5) .. controls ++(0,.5) and ++(0,.5) .. (0,0.5)--(0,-0.5);
 \draw[line width = 1pt, dashed] (-1,3)--(0,3);
\end{scope}

\begin{scope}[shift={(3,-4.5)}, rotate=-90, scale=.8]  
\draw[very thick] (-2, -.5) to (-2, 4);
\draw[very thick] (0,4) to (0,3) .. controls ++(0,-.5) and ++(0,-.5) .. (1,3) to (1,4);
\draw[very thick] (1,-0.5)  to (1,2) .. controls ++(0,.5) and ++(0,.5) .. (0,2)--(0,1.5) .. controls ++(0,-0.5) and ++(0,-.5) .. (-1,1.5)--(-1,4);
\draw[very thick] (-1,-0.5)  to (-1,0.5) .. controls ++(0,.5) and ++(0,.5) .. (0,0.5)--(0,-0.5);
 \draw[line width = 1pt, dashed] (-1,3)--(0,3);
 \draw[very thick, red] (1,2) .. controls ++(0,.5) and ++(0,.5) .. (0,2);
\draw[very thick, red] (0,3) .. controls ++(0,-.5) and ++(0,-.5) .. (1,3);
 \end{scope}
   \end{tikzpicture}
    \subcaption{}
   \end{subfigure}
\caption{(A): The support of a G1 Gaussian elimination isomorphism is modified by $\varphi$ by the red saddle placed on top to no longer be the source of a G1 Gaussian elimination isomorphism in $t'$ . (B): The support of a G1 Gaussian elimination isomorphism is modified by $\varphi$ by the red saddle placed on the bottom to no longer be the source of a G1 Gaussian elimination isomorphism in $t'$.} \label{f.config1ab}
\end{figure}

\item \textbf{The connecting map $\varphi$ obstructs $\alpha$, where $\alpha$ is a G2 Gaussian elimination isomorphism  (Figure \ref{f.connecting_obstruct_2})}.
We illustrate the case where the barring by the connecting map obstructs a G2 Gaussian elimination isomorphism.

\begin{figure}[H] 
\begin{subfigure}{0.4\textwidth}
\centering
\begin{tikzpicture}[every node/.style={scale=.7}, scale=.6]

\begin{scope}[shift={(-4,0)}, rotate=-90, scale=.8] 

\draw[very thick] (-1, -3.5) to (-1, 1.5);
\draw[very thick] (2, -3.5) to (2, 1.5);

\draw[very thick] (1,1.5) to (1,0.5) .. controls ++(0,-.5) and ++(0,-.5) .. (0,0.5) to (0,1.5);
\draw[very thick] (1,-1.5) to (1,-0.5) .. controls ++(0,.5) and ++(0,.5) .. (0,-0.5)--(0,-1.5) .. controls ++(0,-0.5) and ++(0,-.5) .. (1,-1.5);
\node at (0.5, -1){$+$}; 
\draw[very thick] (0,-3.5)  to (0,-2.5) .. controls ++(0,.5) and ++(0,.5) .. (1,-2.5)--(1,-3.5);
\end{scope}

\node at (-0.75, 0.2){$\alpha$};
 \draw[very thick, purple, -stealth] (-2,0)--(1,0);
\draw[very thick, -stealth] (-2, -.5) -- (1, -4.5);
\node at (-0.5, -1.5){$\varphi$};
 \draw[very thick, purple, -stealth] (-2,-5)--(1, -5);
\node at (-1, -4.5){$\alpha'$};

\begin{scope}[shift={(2,0)},rotate=-90, scale=.8]
\draw[very thick] (2, -.5) to (2, 4.5);
\draw[very thick] (0,4.5) to (0,4) .. controls ++(0,-.5) and ++(0,-.5) .. (1,4) to (1,4.5);
\draw[very thick] 
(-1,-0.5)  to (-1,1.25) .. controls ++(0,.5) and ++(0,.5) .. 
(0,1.25) to (0,1)   .. controls ++(0,-0.5) and ++(0,-.5) .. 
(1,1) to (1,2.75) .. controls ++(0,.5) and ++(0,.5) .. 
(0,2.75) to (0,2.5) .. controls ++(0,-0.5) and ++(0,-.5) .. (-1,2.5) to (-1, 4.5);
\draw[very thick] (0,-0.5)  to (0,0) .. controls ++(0,.5) and ++(0,.5) .. (1,0)--(1,-0.5);

\end{scope}
  
\begin{scope}[shift={(-6,-4)}, rotate=-90, scale=.8]  

\draw[very thick] (-1, -.5) to (-1, 4.5);
\draw[very thick] (0,4.5) to (0,4) .. controls ++(0,-.5) and ++(0,-.5) .. (1,4) to (1,4.5);
\draw[very thick] 
(2,-0.5)  to (2,1.25) .. controls ++(0,.5) and ++(0,.5) .. 
(1,1.25) to (1,1)   .. controls ++(0,-0.5) and ++(0,-.5) .. 
(0,1) to (0,2.75) .. controls ++(0,.5) and ++(0,.5) .. 
(1,2.75) to (1,2.5) .. controls ++(0,-0.5) and ++(0,-.5) .. (2,2.5) to (2, 4.5);
\draw[very thick, red] (2,1.25) .. controls ++(0,.5) and ++(0,.5) .. 
(1,1.25);
\draw[very thick, red] (1,2.5) .. controls ++(0,-0.5) and ++(0,-.5) .. (2,2.5);
\draw[very thick] (0,-0.5)  to (0,0) .. controls ++(0,.5) and ++(0,.5) .. (1,0)--(1,-0.5);

\end{scope}

\begin{scope}[shift={(2,-4)}, rotate=-90, scale=.8]  
\draw[very thick] (-1, -.5) to (-1, 4.5);
\draw[very thick] (0,4.5) to (0,4) .. controls ++(0,-.5) and ++(0,-.5) .. (1,4) to (1,4.5);
\draw[very thick] 
(2,-0.5)  to (2,1.25) .. controls ++(0,.5) and ++(0,.5) .. 
(1,1.25) to (1,1)   .. controls ++(0,-0.5) and ++(0,-.5) .. 
(0,1) to (0,2.75) .. controls ++(0,.5) and ++(0,.5) .. 
(1,2.75) to (1,2.5) .. controls ++(0,-0.5) and ++(0,-.5) .. (2,2.5) to (2, 4.5);
\draw[very thick, red] (2,1.25) .. controls ++(0,.5) and ++(0,.5) .. 
(1,1.25);
\draw[very thick, red] (1,2.5) .. controls ++(0,-0.5) and ++(0,-.5) .. (2,2.5);
\draw[very thick] (0,-0.5)  to (0,0) .. controls ++(0,.5) and ++(0,.5) .. (1,0)--(1,-0.5);
 \end{scope}
 
   \end{tikzpicture}
\caption{\label{f.connecting_obstruct_2} }
   \end{subfigure}\hspace{1cm}
   \begin{subfigure}{0.4\textwidth}
   \centering 
\begin{tikzpicture}[every node/.style={scale=.7}, scale=.6]

\begin{scope}[shift={(-6,0)}, rotate=-90, scale=.8] 

\draw[very thick] (-1, -.5) to (-1, 4);
\draw[very thick] (1,4) to (1,3) .. controls ++(0,-.5) and ++(0,-.5) .. (0,3) to (0,4);
\draw[very thick] (0,-0.5)  to (0,2) .. controls ++(0,.5) and ++(0,.5) .. (1,2)--(1,-0.5);
 \end{scope}

\node at (-0.75, 0.2){$\alpha$};
 \draw[very thick, purple, -stealth] (-2,0)--(1,0);
\draw[very thick, -stealth] (-2, -.5) -- (1, -4.5);
\node at (-0.5, -1.5){$\varphi$};
 \draw[very thick, purple, -stealth] (-2,-5)--(1, -5);
\node at (-1, -4.5){$\alpha'$};

\begin{scope}[shift={(2,0)},rotate=-90, scale=.8]
\draw[very thick] (-1, -.5) to (-1, 4);
\draw[very thick] (1,4) to (1,3) .. controls ++(0,-.5) and ++(0,-.5) .. (0,3) to (0,4);
\draw[very thick] (0,-0.5)  to (0,2) .. controls ++(0,.5) and ++(0,.5) .. (1,2)--(1,-0.5);
 \end{scope}

\begin{scope}[shift={(-6,-4.5)}, rotate=-90, scale=.8]  

\draw[very thick] (-1, -.5) to (-1, 4);
\draw[very thick] (0,4)--(0,1.5) .. controls ++(0,-0.5) and ++(0,-.5) .. (1,1.5)--(1,4);
\draw[very thick] (0,-0.5)  to (0,0.5) .. controls ++(0,.5) and ++(0,.5) .. (1,0.5)--(1,-0.5);
\draw[very thick, red] (0,1.5) .. controls ++(0,-0.5) and ++(0,-.5) .. (1,1.5);
\draw[very thick, red] (0,0.5) .. controls ++(0,.5) and ++(0,.5) .. (1,0.5);

\draw[line width = 1pt, dashed] (0,2.5)--(1,2.5);

 \end{scope}

\begin{scope}[shift={(2,-4.5)}, rotate=-90, scale=.8]  
\draw[very thick] (-1, -.5) to (-1, 4);
\draw[very thick] (0,2) to (0,1.5) .. controls ++(0,-0.5) and ++(0,-.5) .. (1,1.5) to (1,2).. controls ++(0,.5) and ++(0,.5) .. (0,2);

\draw[very thick] (0,4)--(0,3) .. controls ++(0,-0.5) and ++(0,-.5) .. (1,3)--(1,4);
\draw[very thick] (0,-0.5)  to (0,0.5) .. controls ++(0,.5) and ++(0,.5) .. (1,0.5)--(1,-0.5);
\draw[very thick, red] (0,1.5) .. controls ++(0,-0.5) and ++(0,-.5) .. (1,1.5);
\draw[very thick, red] (0,0.5) .. controls ++(0,.5) and ++(0,.5) .. (1,0.5);
\node at (0.5, 1.75){$-$}; 
 \end{scope}
   \end{tikzpicture}  
  \caption{}
   \label{fig:connectingKauff}

   \end{subfigure}
\caption{(A): The connecting map $\varphi$ changes the subword $w\subseteq s$ on which $\alpha$ is supported, so 
that the $t'[\init(w), \fin(w)]$ is no longer the source of a G2 Gaussian elimination isomorphism.  (B): The connecting map $\varphi$ barrs the letter corresponding to the red saddle, which is the target of a Gausian elimination isomorphism in $t'$. }

\end{figure}
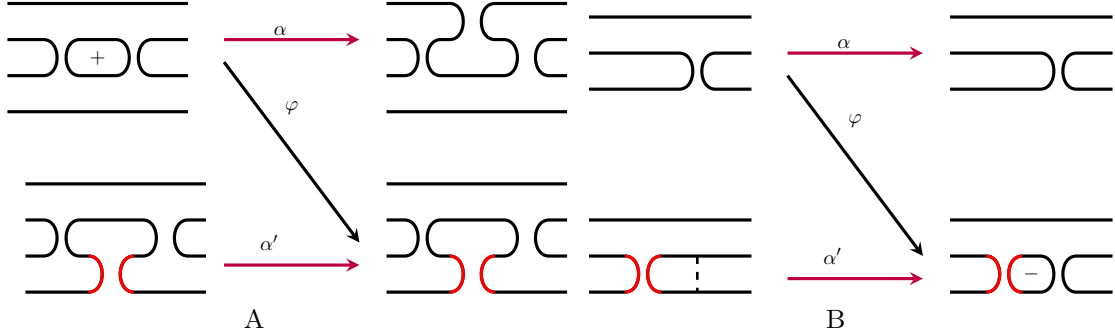 

\item  \textbf{The connecting map $\varphi$ makes $\alpha'$, where $\alpha'$ is a G1 Gaussian elimination isomorphism (Figure \ref{fig:connectingKauff})}.

\item \textbf{The connecting map $\varphi$ makes $\alpha'$, where $\alpha'$ is a G2 Gaussian elimination isomorphism (Figure \ref{f.config4})}.


\begin{figure}[H]
 \begin{subfigure}{.45\textwidth}
\begin{tikzpicture}[every node/.style={scale=.7}, scale=.6]

\begin{scope}[shift={(-6,0)}, rotate=-90, scale=.8] 

\draw[very thick] (2, -.5) to (2, 4.5);
\draw[very thick, dashed] (0, 4.2) to (1, 4.2);
\draw[very thick] (0,4.5) to (0,4) .. controls ++(0,-.5) and ++(0,-.5) .. (1,4) to (1,4.5);
\draw[very thick] 
(-1,-0.5)  to (-1,1.25) .. controls ++(0,.5) and ++(0,.5) .. 
(0,1.25) to (0,1)   .. controls ++(0,-0.5) and ++(0,-.5) .. 
(1,1) to (1,2.75) .. controls ++(0,.5) and ++(0,.5) .. 
(0,2.75) to (0,2.5) .. controls ++(0,-0.5) and ++(0,-.5) .. (-1,2.5) to (-1, 4.5);
\draw[very thick] (0,-0.5)  to (0,0) .. controls ++(0,.5) and ++(0,.5) .. (1,0)--(1,-0.5);

\end{scope}

\node at (-0.75, 0.2){$\alpha$};
 \draw[very thick, purple, -stealth] (-2,0)--(1,0);
\draw[very thick, -stealth] (-2, -.5) -- (1, -4.5);
\node at (-0.5, -1.5){$\varphi$};
 \draw[very thick, purple, -stealth] (-2,-5)--(1, -5);
\node at (-1, -4.5){$\alpha'$};

\begin{scope}[shift={(2,0)},rotate=-90, scale=.8]
\draw[very thick] (2, -.5) to (2, 4.5);
\draw[very thick, dashed] (0, 4.2) to (1, 4.2);
\draw[very thick] (0,4.5) to (0,4) .. controls ++(0,-.5) and ++(0,-.5) .. (1,4) to (1,4.5);
\draw[very thick] 
(-1,-0.5)  to (-1,1.25) .. controls ++(0,.5) and ++(0,.5) .. 
(0,1.25) to (0,1)   .. controls ++(0,-0.5) and ++(0,-.5) .. 
(1,1) to (1,2.75) .. controls ++(0,.5) and ++(0,.5) .. 
(0,2.75) to (0,2.5) .. controls ++(0,-0.5) and ++(0,-.5) .. (-1,2.5) to (-1, 4.5);
\draw[very thick] (0,-0.5)  to (0,0) .. controls ++(0,.5) and ++(0,.5) .. (1,0)--(1,-0.5);

\end{scope}
  
\begin{scope}[shift={(-4,-5)}, rotate=-90, scale=.8]  
\draw[very thick] (-1, -3.5) to (-1, 1.5);
\draw[very thick] (2, -3.5) to (2, 1.5);

\draw[very thick] (1,1.5) to (1,0.5) .. controls ++(0,-.5) and ++(0,-.5) .. (0,0.5) to (0,1.5);
\draw[very thick] (1,-1.5) to (1,-0.5) .. controls ++(0,.5) and ++(0,.5) .. (0,-0.5)--(0,-1.5) .. controls ++(0,-0.5) and ++(0,-.5) .. (1,-1.5);
\node at (0.5, -1){$+$}; 
\draw[very thick] (0,-3.5)  to (0,-2.5) .. controls ++(0,.5) and ++(0,.5) .. (1,-2.5)--(1,-3.5);

 \end{scope}

\begin{scope}[shift={(2,-5)}, rotate=-90, scale=.8]  

\draw[very thick] (2, -.5) to (2, 4.5);
\draw[very thick] (0,4.5) to (0,4) .. controls ++(0,-.5) and ++(0,-.5) .. (1,4) to (1,4.5);
\draw[very thick] 
(-1,-0.5)  to (-1,1.25) .. controls ++(0,.5) and ++(0,.5) .. 
(0,1.25) to (0,1)   .. controls ++(0,-0.5) and ++(0,-.5) .. 
(1,1) to (1,2.75) .. controls ++(0,.5) and ++(0,.5) .. 
(0,2.75) to (0,2.5) .. controls ++(0,-0.5) and ++(0,-.5) .. (-1,2.5) to (-1, 4.5);

\draw[very thick] (0,-0.5)  to (0,0) .. controls ++(0,.5) and ++(0,.5) .. (1,0)--(1,-0.5);
\draw[very thick, red] (0,2.75) .. controls ++(0,.5) and ++(0,.5) .. 
(1,2.75);
\draw[very thick, red] (0,4) .. controls ++(0,-0.5) and ++(0,-.5) .. (1,4);

 \end{scope}
   \end{tikzpicture}  \subcaption{}
   \end{subfigure} \hspace{0.3cm}
  \begin{subfigure}{.45\textwidth}
\begin{tikzpicture}[every node/.style={scale=.7}, scale=.6]

\begin{scope}[shift={(-6,0)}, rotate=-90, scale=.8] 
\draw[very thick] (2, -.5) to (2, 4.5);
\draw[very thick, dashed] (0, -0.2) to (1, -0.2);
\draw[very thick] (0,4.5) to (0,4) .. controls ++(0,-.5) and ++(0,-.5) .. (1,4) to (1,4.5);
\draw[very thick] 
(-1,-0.5)  to (-1,1.25) .. controls ++(0,.5) and ++(0,.5) .. 
(0,1.25) to (0,1)   .. controls ++(0,-0.5) and ++(0,-.5) .. 
(1,1) to (1,2.75) .. controls ++(0,.5) and ++(0,.5) .. 
(0,2.75) to (0,2.5) .. controls ++(0,-0.5) and ++(0,-.5) .. (-1,2.5) to (-1, 4.5);
\draw[very thick] (0,-0.5)  to (0,0) .. controls ++(0,.5) and ++(0,.5) .. (1,0)--(1,-0.5);

\end{scope}
\node at (-0.75, 0.2){$\alpha$};
 \draw[very thick, purple, -stealth] (-2,0)--(1,0);
\draw[very thick, -stealth] (-2, -.5) -- (1, -4.5);
\node at (-0.5, -1.5){$\varphi$};
 \draw[very thick, purple, -stealth] (-2,-5)--(1, -5);
\node at (-1, -4.5){$\alpha'$};

\begin{scope}[shift={(2,0)},rotate=-90, scale=.8]
\draw[very thick] (2, -.5) to (2, 4.5);
\draw[very thick, dashed] (0, -0.2) to (1, -0.2);
\draw[very thick] (0,4.5) to (0,4) .. controls ++(0,-.5) and ++(0,-.5) .. (1,4) to (1,4.5);
\draw[very thick] 
(-1,-0.5)  to (-1,1.25) .. controls ++(0,.5) and ++(0,.5) .. 
(0,1.25) to (0,1)   .. controls ++(0,-0.5) and ++(0,-.5) .. 
(1,1) to (1,2.75) .. controls ++(0,.5) and ++(0,.5) .. 
(0,2.75) to (0,2.5) .. controls ++(0,-0.5) and ++(0,-.5) .. (-1,2.5) to (-1, 4.5);
\draw[very thick] (0,-0.5)  to (0,0) .. controls ++(0,.5) and ++(0,.5) .. (1,0)--(1,-0.5);

\end{scope}
  
\begin{scope}[shift={(-4,-5)}, rotate=-90, scale=.8]  
\draw[very thick] (-1, -3.5) to (-1, 1.5);
\draw[very thick] (2, -3.5) to (2, 1.5);

\draw[very thick] (1,1.5) to (1,0.5) .. controls ++(0,-.5) and ++(0,-.5) .. (0,0.5) to (0,1.5);
\draw[very thick] (1,-1.5) to (1,-0.5) .. controls ++(0,.5) and ++(0,.5) .. (0,-0.5)--(0,-1.5) .. controls ++(0,-0.5) and ++(0,-.5) .. (1,-1.5);
\node at (0.5, -1){$+$}; 
\draw[very thick] (0,-3.5)  to (0,-2.5) .. controls ++(0,.5) and ++(0,.5) .. (1,-2.5)--(1,-3.5);

 \end{scope}

\begin{scope}[shift={(2,-5)}, rotate=-90, scale=.8]  

\draw[very thick] (2, -.5) to (2, 4.5);
\draw[very thick] (0,4.5) to (0,4) .. controls ++(0,-.5) and ++(0,-.5) .. (1,4) to (1,4.5);
\draw[very thick] 
(-1,-0.5)  to (-1,1.25) .. controls ++(0,.5) and ++(0,.5) .. 
(0,1.25) to (0,1)   .. controls ++(0,-0.5) and ++(0,-.5) .. 
(1,1) to (1,2.75) .. controls ++(0,.5) and ++(0,.5) .. 
(0,2.75) to (0,2.5) .. controls ++(0,-0.5) and ++(0,-.5) .. (-1,2.5) to (-1, 4.5);

\draw[very thick] (0,-0.5)  to (0,0) .. controls ++(0,.5) and ++(0,.5) .. (1,0)--(1,-0.5);
\draw[very thick, red] (0,0) .. controls ++(0,.5) and ++(0,.5) .. 
(1,0);
\draw[very thick, red] (0,1) .. controls ++(0,-0.5) and ++(0,-.5) .. (1,1);

 \end{scope}
   \end{tikzpicture}
    \subcaption{}
   \end{subfigure}
\caption{\label{f.config4} \label{f.config4}  (A): The barring of $\varphi$ corresponding to the red saddle creates the subword in $t'$ that is the target of a $G2$ Gaussian elimination isomorphism.  (B): The barring of $\varphi$ corresponding to the red saddle in the other possible location creates the subword in $t'$ that is the target of a $G2$ Gaussian elimination isomorphism.  
}
\end{figure}
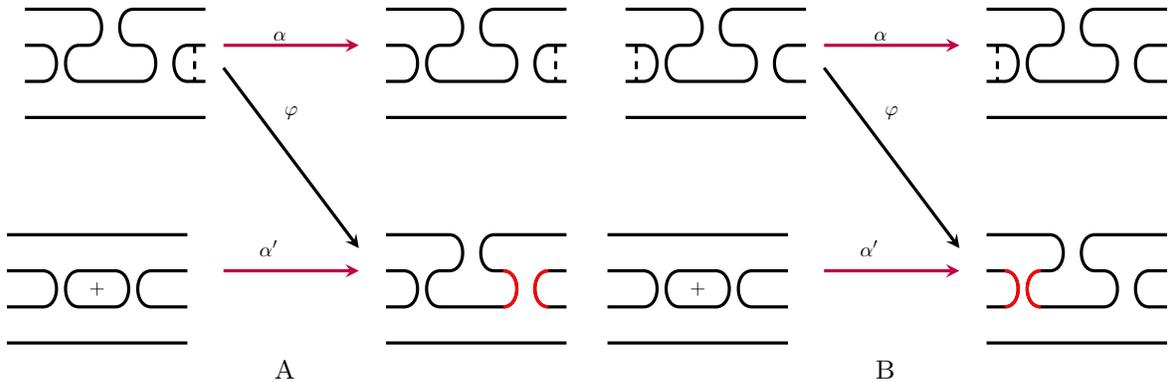 
\end{itemize}

Now we show that the graph $G$ with vertices $V = V(G)$, the set of distinguished Gaussian elimination isomorphisms in $\G$, and edges $E = E(G)$, the set of connecting maps, is acyclic. This is not in general true for an arbitrary subset of component maps of the differential in $\CKh(\beta)$, but we are in the special case of a subgraph of the Khovanov complex, where Algorithm \ref{ss.algorithm} chooses a relatively small subset of the edges in the full Khovanov cube in the construction of $G$. Our proof depends on the characterization of a connecting map $\varphi$ as explained in Lemma \ref{l.connecting}, which corresponds to an edge of $G$. The main strategy is a proof by contradiction, where we assume a cycle exists in $G$ and derive a contradiction.

\begin{remark}
At the time of this writing, we believe it is possible to give an alternative proof using the fact that a cycle corresponds to a cobordism obtained by composing components of the differentials between the same pair of enhanced Kauffman states. The cycle in $G$ exists if and only if the cobordism is not the zero map, which translates to the cobordism having low genus. Then, one could obstruct the existence of such low-genus cobordisms. Since this alternative approach still involves analyzing cases for the components of the differential for short cycles, we think that it is more illustrative to stick with our original approach. 
\end{remark} 

Let $G$ be the graph corresponding to the set of Gaussian elimination isomomorphisms $\mathcal{G}$. We will label a vertex by their corresponding Gaussian elimination isomorphism $\alpha: (s, \epsilon) \rightarrow (t, \delta)$. A path $P$ in $G$ is denoted as follows:  $\alpha_1 \stackrel{\varphi_{1}}{\longrightarrow}  \alpha_2 \stackrel{\varphi_{2}}{\longrightarrow} \cdots \stackrel{\varphi_{p-1}}{\longrightarrow} \alpha_{p}. $ 

We define $\triangle(P)$ to keep track of the changes in the enhanced Kauffman states that are the sources and the targets of the vertices through a path $P$ in the graph $G$. 

\begin{definition}{($\triangle(P)$)} Let
$ P: \alpha_1  \stackrel{\varphi_{1}}{\longrightarrow}  \alpha_2 \stackrel{\varphi_{2}}{\longrightarrow} \cdots \stackrel{\varphi_{p-1}}{\longrightarrow} \alpha_{p} $
be a path in $G$, where $\alpha_i: (s_i, \epsilon_i) \rightarrow (t_i, \delta_i)$ are distinguished isomorphisms in $\G$, for all $1\leq i \leq p$. The set $\triangle(P)$ is the set of crossings on which the Kauffman state of $t_{p}$ chooses the $1$-resolution, but the Kauffman state  of $s_1$ chooses the $0$-resolution.  
\end{definition}

Next for $b\in \triangle(P)$, we define $Z(b)$ as the set of crossings which keeps track of other crossings that are not in $\triangle(P)$ relative to the \textit{position} of $b$ in the braid $\beta$, as described below. 

\paragraph{\textbf{Notation.}} We refer to the horizontal position, reading from left to right, of a letter $c$ corresponding to a crossing in the braid word as $h(c)$. We refer to its vertical position as $v(c)$. The vertical position is the strand number of the lower strand that the crossing abuts; see Figure \ref{f:horvet}.
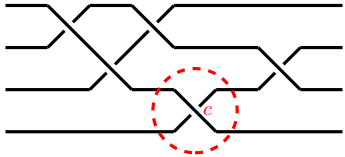
\begin{figure}[H]
\begin{center} 
\begin{tikzpicture}[every node/.style={scale=.7}, scale=.7]
\begin{scope}[shift={(3,-4.5)}, rotate=-90, scale=.8]  
\draw[very thick] (-2,0)  to (-2,1);
\draw[very thick] (-2,2)  to (-2,3);
\draw[very thick] (-2,4)  to (-2,8);

\draw[very thick] (-1,0)  to (-1,1);
\draw[very thick] (-1,4)  to (-1,6);
\draw[very thick] (-1,7)  to (-1,8);

\draw[very thick] (0,0)  to (0,2);
\draw[very thick] (0,3)  to (0,4);
\draw[very thick] (0,5)  to (0,6);
\draw[very thick] (0,7)  to (0,8);

\draw[very thick] (1,0)  to (1,4);
\draw[very thick] (1,5)  to (1,8); 
\draw[very thick, red, dashed ] (0.5,4.5) circle (1);

\draw[very thick] (-1,1)  to (-2,2);
\draw[ultra thick, white] (-1.4,1.4)  to (-1.6,1.6);
\draw[very thick] (0,2)  to (-2,4);
\draw[ultra thick, white] (-0.4,2.4)  to (-0.6,2.6);
\draw[ultra thick, white] (-1.4,3.4)  to (-1.6,3.6);
\draw[very thick] (1,4)  to (0,5);
\draw[ultra thick, white] (0.6,4.4)  to (0.4,4.6);
\draw[very thick] (0,6)  to (-1,7);
\draw[ultra thick, white] (-0.4,6.4)  to (-0.6,6.6);
\draw[very thick] (-2,1)  to (0,3);
\draw[very thick] (-2,3)  to (-1,4);
\draw[very thick] (0,4)  to (1,5);
\draw[very thick] (-1,6)  to (0,7);

\node[red] at (0.5,4.8){$c$};
 \end{scope}
\end{tikzpicture}\end{center} 
\caption{Horizontal and vertical positions of a crossing $c$ such that $h(c)=4$ and $v(C)=1.$}\label{f:horvet} 
\end{figure}

\begin{definition}[$Z(b)$]
Let $P$ be a path in $G$ and let $b\in \triangle(P)$. For $j \leq v(b)$ and $c$ such that $v(c) = j$ and $h(c) = \min \{ c \in \beta \ | \ h(c) \geq h(b) \}$, define 
\[Z_{j}(b):= \{ a \in \beta \ | \ h(a) \leq h(c) \}.    \]  Then 
$ Z(b) := \cup_{j\leq v(b)} Z_j(b). $
We will refer to  such crossings as \textit{trapezoidal}, due to the shape of the enclosing area; see Figure  \ref{f.trapez} for an example. 

\begin{figure}[H]
\begin{center} 
\begin{tikzpicture}[every node/.style={scale=.7}, scale=.7]
\begin{scope}[shift={(3,-4.5)}, rotate=-90, scale=.8]  
\draw[very thick] (-2,-0.5)  to (-2,3.5) .. controls ++(0,.5) and ++(0,.5) .. (-1,3.5)--(-1,3) .. controls ++(0,-0.5) and ++(0,-.5) .. (0,3)--(0,6);
\draw[very thick] (-2,6) to (-2,4.5) .. controls ++(0,-.5) and ++(0,-.5) .. (-1,4.5) to (-1,6);
\draw[very thick] (-1,-0.5)  to (-1,2) .. controls ++(0,.5) and ++(0,.5) .. (0,2)--(0,1.5) .. controls ++(0,-0.5) and ++(0,-.5) .. (1,1.5)--(1,6);
\draw[very thick] (0,-0.5)  to (0,0.5) .. controls ++(0,.5) and ++(0,.5) .. (1,0.5)--(1,-0.5);
\node[red] at (-0.5, 2){$b$};
\draw[very thick, red] (-1,3) .. controls ++(0,-.5) and ++(0,-.5) .. (0,3);
\draw[very thick, red] (-1,2) .. controls ++(0,.5) and ++(0,.5) .. (0,2);
\draw[very thick] (2,-0.5)  to (2,6);
\draw[line width = 1pt, dashed] (-2,5.5)--(-1,5.5);
\draw[line width = 1pt, dashed] (-2,-0.2)--(-1,-0.2);
\draw[line width = 1pt, dashed] (-1,5)--(0,5);
\draw[line width = 1pt, dashed] (0,4.3)--(1,4.3);
\draw[line width = 1pt, dashed] (1,4)--(2,4);
\draw[line width = 1pt, dashed] (1,0)--(2,0);
\draw[very thick, cyan] (2.5,-0.7) to (-1.5,-0.7) to (-1.5, 3.3) to (2.5, 6.5) to  (2.5, -0.7) ;
 \end{scope}
\end{tikzpicture}\end{center} 
\caption{The set of crossings $Z(b)$ is all the crossings in the trapezoidal region as shown. } 
\label{f.trapez}
\end{figure}
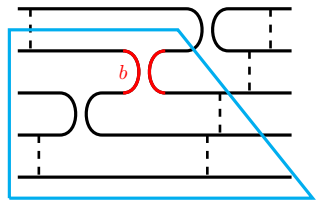 
\end{definition}

We introduce some more notation and definitions for keeping track of the changes to the barring of barred braid words that occur as Kauffman states of sources corresponding to vertices in a path of $G$. 

Let $P: \alpha_1 \rightarrow \cdots \rightarrow \alpha_p$ be a path in $G$. For $1< i \leq p$ and vertex $\alpha_i:(s_i,\epsilon_i) \rightarrow (t_i, \delta_i)$, let $\triangle^i(P) := \triangle(P_i)$ be the set of crossings on which the Kauffman state $s_i$  bars a letter $a$, but the letter $a$ is unbarred in $s_1$ for the subpath 
$P_i : \alpha_1 \rightarrow \cdots \rightarrow \alpha_i$ ending at $\alpha_i$. Let $b\in \triangle^i(P)$. Then $Z_i(b)$ is the barred braid word corresponding to the subword of $s_i$, consisting of the set of crossings in $Z(b)$ with the barring information from $s_i$. 

\begin{lemma} \label{l.unbarred_diff} 
Suppose we have a path $P: \alpha_1 \rightarrow \cdots \rightarrow \alpha_p$ in $G$. Let $b$ be a crossing that is barred in $s_1$. Suppose that $b$ becomes unbarred at the end of the path. That is, it is unbarred in  $s_p$ and this is the first instance in the path $P$ where this happens. Furthermore, $Z_p(b) = Z_1(p)$. 

Then there is an $i$, $ 1 < i < p$, and a crossing $b'\in \beta$, such that
\begin{itemize} 
\item[(1)] $b'\in \triangle^i(P)$, and
\item[(2)] $Z_i(b') =  Z_1(b')$, and at least one of the following is true:
\begin{itemize}
\item[(a)] $b' < b$, or
\item[(b)] $b' \in Z(b)$.
    \end{itemize}
\end{itemize}

\begin{proof}
We provide a proof by induction on the number of crossings in $Z(b)$.

\paragraph{\textbf{Base case: $|Z(b)| = 0$.}}
If the set $Z(b)$ is empty, there is nothing to prove, as the barring on $b$ is not removable by a distinguished Gaussian elimination isomorphism. The path $P$ does not exist. We rule out this case; see Figure \ref{f:nonremovable}. 

\begin{figure}[H]
\centering
\begin{tikzpicture}[every node/.style={scale=.7}, scale=.6]

\begin{scope}[shift={(-5,-5)},rotate=-90, scale=.8] 
\draw[very thick] (-2, -.5) to (-2, 4);
\draw[very thick] (-1, -.5) to (-1, 4);
\draw[very thick] (1,4) to (1,1.5) .. controls ++(0,-.5) and ++(0,-.5) .. (0,1.5) to (0,4);
\draw[line width = 1pt, dashed] (-1,3)--(-2,3);
\draw[line width = 1pt, dashed] (0,2)--(-1,2);
\draw[very thick] (0,-0.5)  to (0,0.5) .. controls ++(0,.5) and ++(0,.5) .. (1,0.5)--(1,-0.5);
\draw[very thick, red] (0,0.5) .. controls ++(0,.5) and ++(0,.5) .. 
(1,0.5);
\draw[very thick, red] (0,1.5) .. controls ++(0,-0.5) and ++(0,-.5) .. (1,1.5);
 \end{scope}
 \end{tikzpicture}
 \caption{Example for a nonremovable barring on $b$. The letter $b$ is the first letter in the braid word and the barring on $b$ cannot be removed by any distinguished Gaussian elimination isomorphism, as it cannot be the active crossing of the target of any Gaussian elimination isomorphism.}\label{f:nonremovable}
\end{figure}
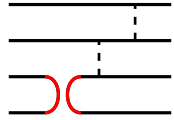

\paragraph{\textbf{Inductive hypothesis.}}
We assume $Z(b)$ with $|Z(b)|=n$ is nonempty, and that for all $b$,  such that $Z(b) < n $ there is an $i$, $ 1 < i < p$, and a crossing $b'\in \beta$, such that conditions (1) and (2) of the lemma are satisfied.

For the crossing $b$ to be unbarred in $s_p$, this must occur as the result of going in the reverse direction of the distinguished Gaussian elimination isomorphism $\alpha_p$. Therefore, the crossing $b$ must be part of a subword of $t_p$ that is the target of $\alpha_p$. We consider the two cases depending on whether the distinguished isomorphism $\alpha_p$ is of type G1 or G2.  
\begin{itemize} 
\item \textbf{The distinguished Gaussian elimination isomorphism $\alpha_p$ is of type G1.} This means that $b = \bar{j}$, the second barred $j$ in the subword $w = \bar{j}x\bar{j} \in t_p$; see Figure \ref{f.config1}.

\begin{figure}[H]
    \centering
    \begin{tikzpicture}[every node/.style={scale=.7}, scale=.6]

\begin{scope}[rotate=-90]  

\draw[very thick] (1,1.5) to (1,0.5) .. controls ++(0,-.5) and ++(0,-.5) .. (0,0.5) to (0,1.5);
\draw[very thick] (1,-1.5) to (1,-0.5) .. controls ++(0,.5) and ++(0,.5) .. (0,-0.5)--(0,-1.5) .. controls ++(0,-0.5) and ++(0,-.5) .. (1,-1.5);
\node at (-0.5, -2){$\overline{j}$}; 
\node at (-0.5, 0){$\overline{j}$}; 
\node at (0.5, -1){$-$}; 
\draw[very thick] (0,-3.5)  to (0,-2.5) .. controls ++(0,.5) and ++(0,.5) .. (1,-2.5)--(1,-3.5);

 \end{scope}
 \end{tikzpicture}
\caption{Local picture of $t_p$.} \label{f.config1}
\end{figure}
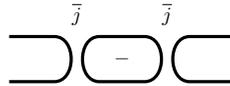

We examine the possible barrings of crossings around the subword $t_p$ as shown in Figure $\ref{f.config1}$; see Figure \ref{f21:2} for an illustration.

Denote $a = \init(\alpha_p)$. If $a$ is not barred in $s_1$, then we choose $b' = a$.  Otherwise we have a subword $w(1) = \bar{j}x_1\bar{j} = s_1[\init(w), \fin(w)]$ in $s_1$. Here there are two cases for this subword in $s_1$: 
\begin{enumerate}
\item $w(1)$ is a target of a G1 or G2 Gaussian elimination isomorphism. \\ 
Compare $w(1)$ with $w_1\subseteq s_1$, where $w_1$ is the support of the distinguished Gaussian elimination isomorphism $\alpha_1$. We must have $w_1 < w(1)$. That is, $\init(\alpha_1) < \init(\alpha_p)$. Otherwise, $\alpha_1$ would not be chosen as the distinguished Gaussian elimination isomorphism for $s_1$ following Algorithm \ref{ss.algorithm}. By Lemma \ref{l.connecting}, $\varphi_1$ must obstruct $\alpha_1$ or make $\alpha_2$. This means there exists a $b' \in s_2[1, \fin(\alpha_1)]$, which is barred in $s_2$ but not barred in $s_1$. We can directly check that $b'$ satisfies the conditions of Lemma \ref{l.unbarred_diff}, in particular, $b'< b$. See Figure \ref{f21:1} for the case where $w(1)$ is the target of a G2 Gaussian elimination isomorphism. 

\begin{figure}[htbp!]
 \centering
\begin{subfigure}{.45\textwidth}
\centering
\begin{tikzpicture}[every node/.style={scale=.7}, scale=.6]

\begin{scope}[rotate=-90]

\draw[very thick] (0,4.5) to (0,4) .. controls ++(0,-.5) and ++(0,-.5) .. (1,4) to (1,4.5);

\draw[very thick] 
(2, 4.5) to (2,2.75)
.. controls ++(0,-.5) and ++(0,-.5) .. 
(1,2.75) .. controls ++(0,.5) and ++(0,.5) .. 
(0,2.75)  .. controls ++(0,-0.5) and ++(0,-.5) .. (-1,2.75) to (-1, 4.5);

\draw[very thick] 
(-1,-0.5)  to (-1,1.25) .. controls ++(0,.5) and ++(0,.5) .. 
(0,1.25) 
.. controls ++(0,-0.5) and ++(0,-.5) .. 
(1,1.25) .. controls ++(0,.5) and ++(0,.5) .. 
(2,1.25) to (2,-0.5);

\draw[very thick] (0,-0.5)  to (0,0) .. controls ++(0,.5) and ++(0,.5) .. (1,0)--(1,-0.5);

\draw[very thick] (0,-0.5)  to (0,0) .. controls ++(0,.5) and ++(0,.5) .. (1,0)--(1,-0.5);
\node at (-0.5, 2.6){$c$}; 
\node at (1.5, 2.6){$d$}; 
\node at (1.2, 0.5){$a$}; 
\node at (1.2, 3.5){$b$}; 
\node at (-0.2, 0.5){$\bar{j}$}; 
\node at (-0.2, 3.5){$\bar{j}$}; 
\end{scope}
    \end{tikzpicture} \caption{}\label{f21:2}
    \end{subfigure}   \begin{subfigure}{.45\textwidth}
    \centering
\begin{tikzpicture}[every node/.style={scale=.7}, scale=.6]

\begin{scope}[rotate=-90] 
\draw[very thick] (0,4.5) to (0,4) .. controls ++(0,-.5) and ++(0,-.5) .. (1,4) to (1,4.5);
\draw[very thick] 
(-1,-0.5)  to (-1,1.25) .. controls ++(0,.5) and ++(0,.5) .. 
(0,1.25) to (0,1)   .. controls ++(0,-0.5) and ++(0,-.5) .. 
(1,1) to (1,2.75) .. controls ++(0,.5) and ++(0,.5) .. 
(0,2.75) to (0,2.5) .. controls ++(0,-0.5) and ++(0,-.5) .. (-1,2.5) to (-1, 4.5);
\draw[very thick] (0,-0.5)  to (0,0) .. controls ++(0,.5) and ++(0,.5) .. (1,0)--(1,-0.5);
\node at (-0.5, 2.6){$c$}; 
\node at (1.2, 0.5){$a$}; 
\node at (1.2, 3.5){$b$}; 
\node at (-0.2, 0.5){$\bar{j}$}; 
\node at (-0.2, 3.5){$\bar{j}$}; 
\end{scope}
\end{tikzpicture} \caption{}\label{f21:1}
\end{subfigure} 
\caption{(A): Possible barrings around $w(1)$  and the case (B): where $w(1)$ is the target of a G2 Gaussian elimination isomorphism.}\label{f.21}
\end{figure}
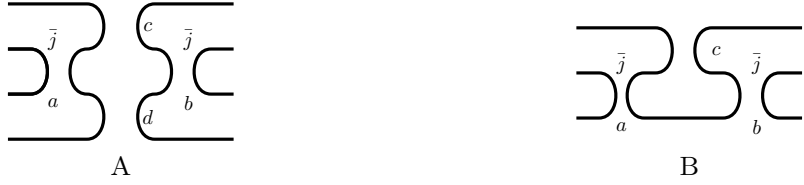

\item $w(1)$ is not a target of a G1 of G2 Gaussian elimination isomorphism. \\ 
The possibilities for the barring on the subword $w(1)$ are as shown in Figures \ref{f21:1}.

Note in particular that we do not need to consider the case when $c$ is barred but $d$ is not, since we assume that $w(1)$ is not a target of a G1 or G2 Gaussian elimination isomorphism. Thus we may always assume that there is a crossing $d$, where $a = s_1[\init(w)] < d < b = s_1[\fin(w)]$, which is barred between $a$ and $b$ in $s_1$. 
In this case we have a crossing $d$, $a<d<b$, which is barred between $a$ and $b$ in $s_1$: 

For the barring on $b$ to be removed as the active crossing of $\alpha_p$, the crossing $d$ must be removed at some $s_i$, with $i < p$. We can apply the induction hypothesis to $d$ to find a $b'$, noting that $|Z(d)| < |Z(b)|$. 
\end{enumerate} 
\item \textbf{The distinguished Gaussian elimination isomorphism $\alpha_2$ is of type G2. }
This means that $b = \overline{(j+1)}$ in the subword $w = \bar{j} \overline{(j+1)} x_p \bar{j} \subseteq  s_p$ that is the target of $\alpha_p$ (see Figure \ref{f.abc}).

\begin{figure}
    \centering
    \begin{tikzpicture}
\begin{scope}[shift={(2,-5)}, rotate=-90, scale=.6]  

\draw[very thick] (0,4.5) to (0,4) .. controls ++(0,-.5) and ++(0,-.5) .. (1,4) to (1,4.5);
\draw[very thick] 
(-1,-0.5)  to (-1,1.25) .. controls ++(0,.5) and ++(0,.5) .. 
(0,1.25) to (0,1)   .. controls ++(0,-0.5) and ++(0,-.5) .. 
(1,1) to (1,2.75) .. controls ++(0,.5) and ++(0,.5) .. 
(0,2.75) to (0,2.5) .. controls ++(0,-0.5) and ++(0,-.5) .. (-1,2.5) to (-1, 4.5);

\draw[very thick] (0,-0.5)  to (0,0) .. controls ++(0,.5) and ++(0,.5) .. (1,0)--(1,-0.5);
\draw[very thick, red] (-1,1.25) .. controls ++(0,.5) and ++(0,.5) .. 
(0,1.25);
\draw[very thick, red] (-1,2.5) .. controls ++(0,-0.5) and ++(0,-.5) .. (0,2.5);
\node at (-0.5, 3){$b$}; 
\node at (1.5, 0.5){$a$}; 
\node at (1.5, 3.5){$c$}; 
 \end{scope}
   \end{tikzpicture} 
\caption{\label{f.abc} The crossing $b$ corresponding to $\overline{(j+1)}$ is shown in red. }
\end{figure}
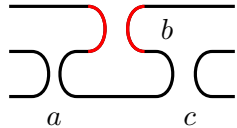
\end{itemize}
Let $a= \init(\alpha_p)$ and $c = \fin(\alpha_p)$. If any one of $a, b$ is not barred in $s_1$, then we can choose $b'$ as either $a$ or $c$. 
Otherwise, we consider $w(1) = s_1[\init(w), \fin(w)]$. There are two cases: 
\begin{enumerate}
\item $w(1)$ is the target of a G1 or G2 Gaussian elimination isomorphism. \\
In this case, $w(1)$ is necessarily the target of a G2 Gaussian elimination isomorphism, because otherwise, if $w(1)$ is the target of a G1 Gaussian elimination isomorphism, the corresponding cobordism would go from an enhanced Kauffman state with a closed component  marked with a $-$ to an enhanced Kauffman state with the same circle marked with a $+$. For grading reasons, the cobordism vanishes.  Comparing with the support $w_1$ of $\alpha_1$, it must be the case that $\init(w_1) < \init(w)$, otherwise $\alpha_1$ would not be chosen. Now $\varphi_1$ must obstruct $\alpha_1$ or make $\alpha_2$, which implies that there exists a $b' \in s_2[1, \init(\alpha_1)]$ which is barred in $s_2$ but not barred in $s_1$. We can directly check that $b'$ satisfies all the conditions of the lemma.

\item $w(1)$ is not the target of a G1 or G2 Gaussian elimination isomorphism.  \\ 
In this case, we have a crossing $d$, $a< d< c$, as shown in Figure \ref{f.abcd}.  

\begin{figure}
\centering
\begin{tikzpicture}[every node/.style={scale=.7}, scale=.6]
\begin{scope}[rotate=-90, scale=.8]
\draw[very thick] (0,6.5) to (0,6).. controls ++(0,-.5) and ++(0,-.5) .. (1,6) to (1,6.5);
\draw[very thick] 
(2, 6.5) to (2,4)
.. controls ++(0,-.5) and ++(0,-.5) .. 
(1,4) to (1,5)
.. controls ++(0,.5) and ++(0,.5) .. 
(0,5) to (0,2.75) 
.. controls ++(0,-0.5) and ++(0,-.5) .. (-1,2.75) to (-1, 6.5);

\draw[very thick] 
(-1,-0.5)  to (-1,1.25) .. controls ++(0,.5) and ++(0,.5) .. 
(0,1.25) 
.. controls ++(0,-0.5) and ++(0,-.5) .. 
(1,1.25) to (1,2.75)
.. controls ++(0,.5) and ++(0,.5) .. 
(2,2.75) to (2,-0.5);

\draw[very thick] (0,-0.5)  to (0,0) .. controls ++(0,.5) and ++(0,.5) .. (1,0)--(1,-0.5);

\draw[very thick] (0,-0.5)  to (0,0) .. controls ++(0,.5) and ++(0,.5) .. (1,0)--(1,-0.5);

\draw[very thick] (2,2.75) .. controls ++(0,.5) and ++(0,.5) .. 
(1,2.75);
\draw[very thick] (1,4) .. controls ++(0,-0.5) and ++(0,-.5) .. (2,4);

\node at (-0.5, 1){$b$}; 
\node at (1.25, 0.6){$a$}; 
\node at (1.5, 2.75){$c$}; 
\node at (1.5, 4.25){$d$}; 
\end{scope}
  
   \end{tikzpicture}
\caption{Possibilities for configuration of barring when the distinguished Gaussian elimination isomorphism is of type G2.}\label{f.abcd}
\end{figure}
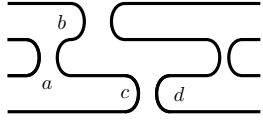
\end{enumerate}
If $d$ is not barred in $s_1$, then we can choose $b' = d$. Otherwise, we assume that $d$ is barred in $s_1$. In order for the barring on $b$ to be removed as the active crossing of a G2 Gaussian elimination isomorphism $\alpha_p$, the barring on $d$ must be removed at $s_k$, where $k < p$. We apply the induction hypothesis to $d$ to find a $b'$ satisfying the conditions of the lemma. 
\end{proof} 
\end{lemma}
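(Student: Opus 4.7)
My plan is to prove the lemma by strong induction on the cardinality $|Z(b)|$. The intuition is that the barring on $b$ cannot be spontaneously removed: it must be undone by running some $\alpha_p$ in reverse (so $b$ is part of the target subword of $\alpha_p$), and the very act of producing that target shape in $t_p$ either requires other crossings in $Z(b)$ to have been modified earlier in $P$, or forces $\alpha_1$ (and hence $\varphi_1$) to act strictly to the left of $b$. In either case we locate the desired $b'$.

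For the base case $|Z(b)| = 0$, I would argue that with no crossings to the left of $b$ available in the trapezoidal region, $b$ cannot occur inside the target subword of any distinguished G1 or G2 isomorphism (there is nothing to its left in the relevant positions to play the role of $\init(\alpha_p)$, and no room to accommodate the auxiliary crossing needed for G2). Hence no $\alpha_p$ can unbar $b$ and the hypothesized path cannot exist, so the base case is vacuously true.

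For the inductive step, I would split into two cases according to whether $\alpha_p$ is of type G1 or G2. In the G1 case, $b$ is the second barred letter of the subword $w = \bar{j}\,x\,\bar{j} \subseteq t_p$ and I would set $a = \init(\alpha_p)$. If $a$ is unbarred in $s_1$ then $a \in \triangle^i(P)$ for some intermediate $i$, and taking $b' = a$ gives $b' < b$ and $Z_i(b') = Z_1(b')$ since $a$ sits to the left of $b$ (so its trapezoid is a subregion of $Z(b)$ where no prior changes have been made). Otherwise, I consider the subword $w(1) = s_1[\init(w), \fin(w)]$ and split further: if $w(1)$ is already the target of a distinguished isomorphism, then Algorithm \ref{ss.algorithm} forces $\init(\alpha_1) < \init(\alpha_p)$, so by Lemma \ref{l.connecting} the connecting map $\varphi_1$ produces a crossing $b'$ left of $\alpha_p$ that does the job; if $w(1)$ is not such a target, then the local pictures force the existence of a barred crossing $d$ strictly between $a$ and $b$ in $s_1$, with $|Z(d)| < |Z(b)|$ and $d$ necessarily unbarred at some step before $p$, so the induction hypothesis applied to $d$ yields a suitable $b'$. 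The G2 case is parallel, using both $a = \init(\alpha_p)$ and $c = \fin(\alpha_p)$ as candidate witnesses, and using that a G1 target shape in $w(1)$ is ruled out on grading grounds (the associated cobordism would take a minus-decorated circle to a plus-decorated one and hence vanish).

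The main obstacle I anticipate is the subcase where $w(1)$ itself is the target of an earlier distinguished isomorphism: there one must carefully combine the priority rule of Algorithm \ref{ss.algorithm} (which orders the choice of $\alpha_1$ by the leftmost barred letter) with the shape classification of connecting maps from Lemma \ref{l.connecting} to guarantee that the resulting witness $b'$ lies either in $Z(b)$ or strictly to the left of $b$, \emph{and} that $Z_i(b') = Z_1(b')$, meaning no earlier step of $P$ has disturbed the trapezoidal region of $b'$. Verifying this last condition is the most delicate part, and I would handle it by choosing $i$ to be the earliest step at which the candidate $b'$ becomes barred, so that by minimality all crossings in $Z(b')$ still agree with $s_1$.
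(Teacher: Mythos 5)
Your proposal follows essentially the same route as the paper's own proof: induction on $|Z(b)|$, a vacuous base case because an isolated barring cannot lie in the target of any distinguished isomorphism, a case split on whether $\alpha_p$ is of type G1 or G2, the sub-split on whether $\init(\alpha_p)$ (and $\fin(\alpha_p)$) is barred in $s_1$ and whether $w(1)$ is a target of a distinguished isomorphism, the appeal to Lemma \ref{l.connecting} in the target subcase, the grading argument ruling out a G1 target in the G2 case, and the application of the induction hypothesis to an intermediate barred crossing $d$ otherwise. Your closing remark about choosing $i$ minimal to guarantee $Z_i(b') = Z_1(b')$ is in fact slightly more careful than the paper's ``we can directly check'' at that step.
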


With the machinery now in place, we prove the main theorem of this section. Let $\mathcal{G}$ be the set of distinguished Gaussian elimination isomorphisms for the braid $\ftbraid_n^k$. Recall $G$ is the graph constructed from $\mathcal{G}$ whose vertices are elements of $\mathcal{G}$ and edges correspond to nonzero components of the differentials in $\CKh(\ftbraid_n^k)$. 

\begin{theorem} \label{t.G-acyclic}
The graph $G$ is acyclic. 
\begin{proof}
    Suppose on the contrary that there is a path in $G$ that is a cycle: 
    \[P: \alpha_1 \rightarrow \cdots \rightarrow \alpha_p = \alpha_1. \]
For every $1< i \leq p$, any crossing in $\triangle^i(P)$ must be unbarred before it reaches the final destination $\alpha_p = \alpha_1$. This is because if $\triangle^{(p-1)}(P)$ is nonempty, then every component of the differential from $s_{p-1} \rightarrow t_1$ is zero. 

Our main strategy is to show that the unbarring of a crossing in $\triangle^i(P)$ is either impossible, or cannot be done without creating a new element in $\triangle^j(P)$ for some $j>i$. Therefore, $\triangle^i(P)$ is never empty, contradicting the existence of the cycle $P$. 

It is not hard to check that $\triangle^2(P) \not= \emptyset$ for any path $P$. Let $b\in \triangle^2(P)$. 

We provide a proof by induction on $|Z(b)|$. 

\paragraph{\textbf{Base case $|Z(b)| = 0$.}} For the base case when $Z(b) = \emptyset$ and so $|Z(b)| = 0$, the barring on $b$ is not removable, and so $\triangle^i(P)$ is never empty for any $1<i<p$ since it will always contain $b$. 

\paragraph{\textbf{Inductive step $|Z(b)| > 0$.}} In general, we let $j$ be the index when $b$ becomes unbarred in the cycle $P$. That is, $b = \AC(\alpha_j)$. We will always choose the smallest possible $j< p$ in the path. We also assume $b$ is the leftmost crossing in $\cup_i \triangle^i(P)$, meaning that $h(b)$ is the minimum over all $b\in \cup_i \triangle^i(P)$.

As in the proof of Lemma \ref{l.unbarred_diff},  we organize cases based on the type of Gaussian elimination isomorphism for $\alpha_j$ (G1 or G2). 

\begin{itemize}
\item \textbf{The distinguished isomorphism $\alpha_j$ is of type G1.} \\
Let $w_j \subseteq s_j$ be the word on which $\alpha_j$ is supported, and $w_i \subseteq s_i$ be the word on which $\alpha_i$ is supported.
Since $b$ is the leftmost crossing in $\cup_i \triangle^i(P)$ and $h(a) < h(b)$, we have that $a = \init(w_j)$ is barred in $s_i$ and $s_1$. 

Consider $w = t_j[a, b]$. This is the target of the G1 Gaussian elimination isomorphism $\alpha_j$.  The subword $w' = s_{j+1}[a, b]$ is the support of the Gaussian elimination isomorphism $\alpha_{j+1}$. We compare $w$ and $w'$ with the subword $w_1'=s_1[\init(w), \fin(w)]$. 
\begin{enumerate}
    \item \textbf{$w_1'$ is the support of a G1 or G2 Gaussian elimination isomorphism. } \\
    In this case, either $w_1' = w_1$, where $w_1$ is the support of $\alpha_1$, in which case $b$ is barred in $s_1$, or $w_1'$ is not the support of $\alpha_1$. The existence of the subword $w_1'$ in $s_1$ and the fact that $\alpha_1$ is the chosen Gaussian elimination isomorphism for $s_1$ imply $w_1 < w_1'$. The obstruction by $\varphi_1$ of $\alpha_1$, or to make $\alpha_2$ by Lemma \ref{l.connecting} would contradict the minimality of $b$. 
    \item \textbf{$w_1'$ is not the support of a G1 or G2 Gaussian elimination isomorphism. } \\
    The possible crossings around $w_1'$ that are all barred are shown in Figure \ref{f.configabcd}. 
\begin{figure}[H]
\begin{center}
\begin{tikzpicture}[every node/.style={scale=.7}, scale=.6]
\begin{scope}[rotate=-90, scale=.8]
\draw[very thick] (1,6.5) to (1,4).. controls ++(0,-.5) and ++(0,-.5) .. (2,4) to (2,6.5);
\draw[very thick]  
(0,6.5) to (0,2.75) 
.. controls ++(0,-0.5) and ++(0,-.5) .. (-1,2.75) to (-1, 6.5);

\draw[very thick] 
(-1,-0.5)  to (-1,1.25) .. controls ++(0,.5) and ++(0,.5) .. 
(0,1.25) 
.. controls ++(0,-0.5) and ++(0,-.5) .. 
(1,1.25) to (1,2.75)
.. controls ++(0,.5) and ++(0,.5) .. 
(2,2.75) to (2,-0.5);

\draw[very thick] (0,-0.5)  to (0,0) .. controls ++(0,.5) and ++(0,.5) .. (1,0)--(1,-0.5);

\draw[very thick] (0,-0.5)  to (0,0) .. controls ++(0,.5) and ++(0,.5) .. (1,0)--(1,-0.5);

\draw[very thick] (2,2.75) .. controls ++(0,.5) and ++(0,.5) .. 
(1,2.75);
\draw[very thick] (1,4) .. controls ++(0,-0.5) and ++(0,-.5) .. (2,4);

\node at (-0.5, 1){$c$}; 
\node at (1.25, 0.6){$a$}; 
\node at (1.5, 2.75){$d$}; 
\node at (0.5, 4.25){$b$};
\draw[very thick, dashed, red] (0,4.5) to (1,4.5);
\end{scope}  
   \end{tikzpicture}

    \caption{\label{f.configabcd} Possible barrings of crossings around the subword $w_1'$.} 
    \end{center} 
    \end{figure}
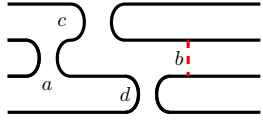

There are three distinct possibilities, as shown in Figure \ref{f.w113possibilities}. 

    \begin{figure}[h]
        \centering
\begin{subfigure}{.25\textwidth}
\centering 
\begin{tikzpicture}[scale=0.6]
\begin{scope}[rotate=-90]  
\draw[very thick] (-1,4) to (-1,3) .. controls ++(0,-.5) and ++(0,-.5) .. (0,3) to (0,4);
\draw[very thick] (-1,-0.5)  to (-1,2) .. controls ++(0,.5) and ++(0,.5) .. (0,2)--(0,1.5) .. controls ++(0,-0.5) and ++(0,-.5) .. (1,1.5)--(1,4);
\node at (0.5, 3.3){$b$}; 
\node at (0.5, 0.6){$a$}; 
\node at (-0.5, 2){$c$}; 
\draw[very thick] (0,-0.5)  to (0,0.5) .. controls ++(0,.5) and ++(0,.5) .. (1,0.5)--(1,-0.5);
\draw[line width = 1pt, dashed, red] (0,3)--(1,3);
 \end{scope}
 \end{tikzpicture} 
\caption{}          
 \end{subfigure}
\begin{subfigure}{.25\textwidth}
\centering 
\begin{tikzpicture}[scale=.6]
\begin{scope}[ rotate=-90]  
\draw[very thick] (0,4) to (0,3) .. controls ++(0,-.5) and ++(0,-.5) .. (1,3) to (1,4);
\draw[very thick] (1,-0.5)  to (1,2) .. controls ++(0,.5) and ++(0,.5) .. (0,2)--(0,1.5) .. controls ++(0,-0.5) and ++(0,-.5) .. (-1,1.5)--(-1,4);
\draw[very thick] (-1,-0.5)  to (-1,0.5) .. controls ++(0,.5) and ++(0,.5) .. (0,0.5)--(0,-0.5);
\node at (-0.5, 3.3){$b$}; 
\node at (-0.5, 0.5){$a$}; 
\node at (0.5, 2){$d$};
 \draw[line width = 1pt, dashed, red] (-1,3)--(0,3);
\end{scope}
 \end{tikzpicture} 
\caption{}          
 \end{subfigure}
\begin{subfigure}{.25\textwidth}
\centering
\begin{tikzpicture}[every node/.style={scale=.7}, scale=.6]
\begin{scope}[rotate=-90, scale=.8]
\draw[very thick] (1,6.5) to (1,4).. controls ++(0,-.5) and ++(0,-.5) .. (2,4) to (2,6.5);
\draw[very thick]  
(0,6.5) to (0,2.75) 
.. controls ++(0,-0.5) and ++(0,-.5) .. (-1,2.75) to (-1, 6.5);

\draw[very thick] 
(-1,-0.5)  to (-1,1.25) .. controls ++(0,.5) and ++(0,.5) .. 
(0,1.25) 
.. controls ++(0,-0.5) and ++(0,-.5) .. 
(1,1.25) to (1,2.75)
.. controls ++(0,.5) and ++(0,.5) .. 
(2,2.75) to (2,-0.5);

\draw[very thick] (0,-0.5)  to (0,0) .. controls ++(0,.5) and ++(0,.5) .. (1,0)--(1,-0.5);

\draw[very thick] (0,-0.5)  to (0,0) .. controls ++(0,.5) and ++(0,.5) .. (1,0)--(1,-0.5);

\draw[very thick] (2,2.75) .. controls ++(0,.5) and ++(0,.5) .. 
(1,2.75);
\draw[very thick] (1,4) .. controls ++(0,-0.5) and ++(0,-.5) .. (2,4);

\node at (-0.5, 1){$c$}; 
\node at (1.25, 0.6){$a$}; 
\node at (1.5, 2.75){$c$}; 
\node at (0.5, 4.25){$b$};
\draw[very thick, dashed, red] (0,4.5) to (1,4.5);
\end{scope}  
   \end{tikzpicture}
   \caption{}    
\end{subfigure}

  \caption{All three possibilities for the barring of $c$, $d$ when $w_1'$ is not the support of a G1 or G2 Gaussian elimination isomorphism: (A)--only $c$ is barred, (B)--only $d$ is barred, and (C)--both $c$ and $d$ are barred.}
        \label{f.w113possibilities}
    \end{figure}
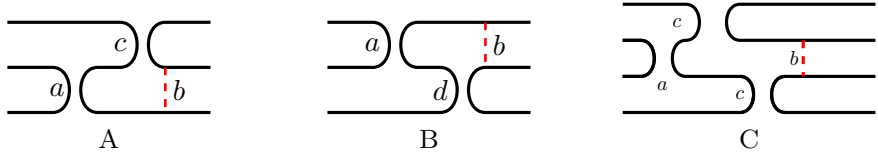

For each of these cases the barring on crossings $c, d$ would have to be removed at $k < j$ in the path $P$, in order for the barring on $c$ to be removed. 

Hence by Lemma \ref{l.unbarred_diff}, there exists some crossing $b' \in \triangle^k(P)$, where $b'<b$ or $b'\in Z(b)$. The first case $b' \in \triangle^k(P)$ contradicts the minimality of $b$. The second case fits into the induction hypothesis of decreasing $|Z(b')|$. 
\end{enumerate}

\item \textbf{The distinguished isomorphism $\alpha_j$ is of type G2.}  \\ 
Let $w_j \subseteq s_j$ be the word on which $\alpha_j$ is supported, and $w_2 \subseteq s_2$ be the word on which $\alpha_2$ is supported. The subword $w_j$ is shown in Figure \ref{f.config2abc}. 


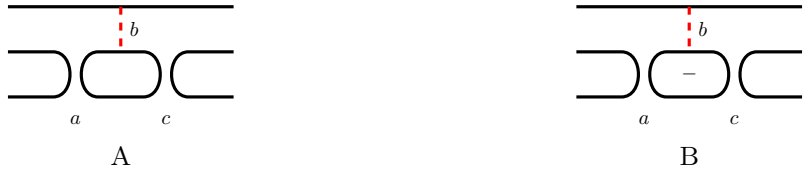
\begin{figure}[H]
    \centering
 \begin{subfigure}{.45\textwidth}
 \centering    
\begin{tikzpicture}[every node/.style={scale=.7}, scale=.6]
\centering
\begin{scope}[rotate=-90]  
\draw[very thick] (-1,-3.5) to (-1,1.5);
\draw[very thick, dashed, red] (-1, -1) to (0,-1);
\node at (-0.5, -0.7){$b$}; 
\draw[very thick] (1,1.5) to (1,0.5) .. controls ++(0,-.5) and ++(0,-.5) .. (0,0.5) to (0,1.5);
\draw[very thick] (1,-1.5) to (1,-0.5) .. controls ++(0,.5) and ++(0,.5) .. (0,-0.5)--(0,-1.5) .. controls ++(0,-0.5) and ++(0,-.5) .. (1,-1.5);
\draw[very thick] (0,-3.5)  to (0,-2.5) .. controls ++(0,.5) and ++(0,.5) .. (1,-2.5)--(1,-3.5);
\node at (1.5, -2){$a$}; 
\node at (1.5, 0){$c$}; 
 \end{scope}
 \end{tikzpicture}
 \caption{}\label{f.config2abc}
 \end{subfigure}
  \begin{subfigure}{.45\textwidth}
 \centering    
\begin{tikzpicture}[every node/.style={scale=.7}, scale=.6]
\centering
\begin{scope}[rotate=-90]  
\draw[very thick] (-1,-3.5) to (-1,1.5);
\draw[very thick, dashed, red] (-1, -1) to (0,-1);
\node at (-0.5, -0.7){$b$}; 
\draw[very thick] (1,1.5) to (1,0.5) .. controls ++(0,-.5) and ++(0,-.5) .. (0,0.5) to (0,1.5);
\draw[very thick] (1,-1.5) to (1,-0.5) .. controls ++(0,.5) and ++(0,.5) .. (0,-0.5)--(0,-1.5) .. controls ++(0,-0.5) and ++(0,-.5) .. (1,-1.5);
\node at (0.5, -1){$-$};
\draw[very thick] (0,-3.5)  to (0,-2.5) .. controls ++(0,.5) and ++(0,.5) .. (1,-2.5)--(1,-3.5);
\node at (1.5, -2){$a$}; 
\node at (1.5, 0){$c$}; 
 \end{scope}
 \end{tikzpicture}
 \caption{}\label{f.config2ab} 
 \end{subfigure}
\caption{(A): The subword $w_j$ supporting a G2 Gaussian elimination isomorphism  and  (B): the enhanced Kauffman state with a circle marked with a $-$.}
\end{figure}
Since $b$ is the leftmost crossing in $\cup_i \triangle^i(P)$, we have that $ a= \init(w_j)$ is barred in $s_1$.

Consider $c = \fin(w_j)$. If the crossing $c$ is unbarred in $s_1$ then we are done, as we can choose $b' = c$. Otherwise, we may assume that $c$ is barred in $s_1$.

We consider $w_1' = s_1[\init(w_j), \fin(w_j)]$. There are two possibilities. 
\begin{enumerate}
\item \textbf{$w_1'$ is the target of a G1 or G2 Gaussian elimination isomorphism.} \\ 
In this case, the only possibility is that it is the target of a G1 Gaussian elimination isomorphism, since $b$ is not barred in $s_1$ (see Figure \ref{f.config2ab}).  
In this case, the subword $w_1 \subseteq s_1$; that is, the support of the Gaussian elimination isomorphism $\alpha_1$ must necessarily be located in front of $w_1'$ by how we apply Algorithm \ref{ss.algorithm}. That is, $\init(w_1) < \init(w_1')$. However, then the location of $\AC(\varphi_1)$ contradicts the minimality of $b$. 

\item \textbf{$w_1'$ is not the target of a G1 or G2 Gaussian elimination} isomorphism. \\
In this case, with $b$ unbarred in $s_1$, the only possibility is that the crossing $d$ is barred between $a$ and $c$, as shown in Figure \ref{f.config2abd}. 
\begin{figure}[H]
\begin{center}
    \begin{tikzpicture}[scale=.8]
    \begin{scope}[rotate=-90, scale=.8]  
\draw[very thick] (-1, -.5) to (-1, 4.5);
\draw[very thick] (0,4.5) to (0,4) .. controls ++(0,-.5) and ++(0,-.5) .. (1,4) to (1,4.5);
\draw[very thick] 
(2,-0.5)  to (2,1.25) .. controls ++(0,.5) and ++(0,.5) .. 
(1,1.25) to (1,1)   .. controls ++(0,-0.5) and ++(0,-.5) .. 
(0,1) to (0,2.75) .. controls ++(0,.5) and ++(0,.5) .. 
(1,2.75) to (1,2.5) .. controls ++(0,-0.5) and ++(0,-.5) .. (2,2.5) to (2, 4.5);
\draw[very thick] (0,-0.5)  to (0,0) .. controls ++(0,.5) and ++(0,.5) .. (1,0)--(1,-0.5);
\node at (-0.5, 1.7){$b$}; 
\node at (0.5, 0){$a$}; 
\node at (0.5, 2.7){$c$}; 
\node at (1.5, 3){$d$}; 
\draw[line width = 1pt, dashed, red] (-1,1.5)--(0,1.5);
\end{scope}   \end{tikzpicture}
\end{center}
\caption{The subword $w_1'$. }\label{f.config2abd} 
\end{figure}
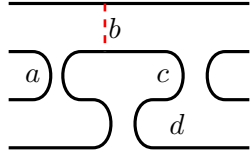
\end{enumerate}
At some point in the path, the barring on the crossing $d$ would have to be removed in order for the barring on $b$ to be removed as the active crossing of the Gaussian elimination isomorphism $\alpha_{p}$. Apply Lemma \ref{l.unbarred_diff} to $b$ to find a $b'$, which contradicts the minimality of $b$. 
\end{itemize}
This completes the proof.
\end{proof}
\end{theorem}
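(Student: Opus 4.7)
The plan is to argue by contradiction: assume there is a directed cycle $P : \alpha_1 \to \alpha_2 \to \cdots \to \alpha_p = \alpha_1$ in $G$. Since the cycle must return to $\alpha_1$ with the same source Kauffman state, every crossing whose bar is added at some stage along $P$ must also be removed before we return to $\alpha_1$. Equivalently, $\triangle^p(P) = \emptyset$. The strategy is to track the sets $\triangle^i(P)$ of ``extra'' barrings and show they can never be fully emptied, producing the contradiction. The first observation is that $\triangle^2(P)$ is nonempty, since the first connecting map $\varphi_1$ must either obstruct $\alpha_1$ or make $\alpha_2$ by Lemma \ref{l.connecting}, and in either case at least one new barred crossing is introduced.

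Next, I would select the leftmost crossing $b \in \bigcup_i \triangle^i(P)$ (minimizing $h(b)$), and let $j$ be the smallest index at which $b$ is unbarred, so $b = \AC(\alpha_j)$. The proof then proceeds by induction on $|Z(b)|$. In the base case $|Z(b)| = 0$, there is no earlier crossing available to host an active crossing of a Gaussian elimination whose target could contain $b$, so the bar on $b$ is simply not removable — this immediately contradicts the existence of the cycle. For the inductive step, I case-split on whether $\alpha_j$ is of type G1 or G2, and in each case look at the subword $w_j \subseteq s_j$ supporting $\alpha_j$ and compare with the subword $w_1' = s_1[\init(w_j), \fin(w_j)]$.

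Within each of the two Gaussian-elimination types I would further split according to whether $w_1'$ is itself the target of a distinguished Gaussian elimination. If $w_1'$ is such a target, then Algorithm \ref{ss.algorithm} forces the support $w_1$ of $\alpha_1$ to lie strictly to the left of $w_1'$, and then Lemma \ref{l.connecting} applied to $\varphi_1$ produces a barred crossing in $\triangle^2(P)$ strictly to the left of $b$, violating minimality. If $w_1'$ is not such a target, then one of the ``middle'' positions (the crossings denoted $c$ or $d$ in the figures accompanying the proof of Lemma \ref{l.unbarred_diff}) must already be barred in $s_1$, and the bar on that crossing has to disappear somewhere earlier along $P$. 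Applying Lemma \ref{l.unbarred_diff} to that crossing $d$ yields some $b' \in \triangle^k(P)$ with either $b' < b$ or $b' \in Z(b)$; the first possibility contradicts minimality of $b$, while the second has $|Z(b')| < |Z(b)|$ and is handled by the inductive hypothesis.

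The main obstacle I anticipate is the careful bookkeeping in the inductive step for G2: when $\alpha_j$ is of type G2, the target subword contains three barred letters $\bar i, \overline{i+1}, \bar i$ with a specific dot/sign decoration on the associated circle, and grading considerations rule out certain configurations (in particular they exclude $w_1'$ being the target of a G2 isomorphism when $b$ is unbarred in $s_1$). The delicate point is to verify that in every remaining configuration either the minimality of $b$ is violated or a strictly smaller $Z(b')$ appears, and that these two outcomes exhaust all cases. Once this case analysis is complete, the induction closes and no cycle can exist, proving that $G$ is acyclic.
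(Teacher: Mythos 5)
Your proposal follows essentially the same route as the paper's proof: contradiction via a hypothetical cycle, tracking the sets $\triangle^i(P)$, selecting the leftmost crossing $b$ and the minimal index $j$ with $b = \AC(\alpha_j)$, inducting on $|Z(b)|$, splitting on whether $\alpha_j$ is of type G1 or G2, and closing each case by invoking Lemma \ref{l.connecting} and Lemma \ref{l.unbarred_diff} to contradict minimality of $b$ or reduce $|Z(b)|$. The outline is correct and matches the paper's argument, including the anticipated delicate point in the G2 case where the sign on the closed component rules out $w_1'$ being a G2 target.
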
 

\begin{definition}[whittled complex] \label{d.whittled_complex}
Let $\CKh(ft^k_n)$ be the Khovanov chain complex for the braid $\beta = ft^k_n$. Let $\G$ be the set of Gaussian elimination isomorphisms selected by Algorithm \ref{ss.algorithm}, and let $G$ be the corresponding graph as in Definition \ref{d.graphisocollection}. Let $<$ be a linear ordering on the $\mathcal{G}$ that exists by Theorem \ref{t.G-acyclic}, and number the elements according to the order as $\alpha_1 < \alpha_2 < \alpha_3 < \ldots < \alpha_{|\G|}$. The whittled complex is defined as the chain complex obtained by applying Gaussian eliminations following the order: 
\[ \FT^k_n := ((\CKh(ft^k_n)_{\alpha_1})_{\alpha_2})\cdots)_{\alpha_{|\G|}}.\]
\end{definition} 

\begin{remark}
    Note that $\FT^k_n$ is homotopy-equivalent to the original chain complex $\CKh(ft^k_n)$.
\end{remark}

\section{Reduction to Jones Normal Form} \label{s.reducetoJNF}
We use the results of the previous sections to prove Theorem \ref{t.deflate}, which we restate here as a proposition. 
\begin{proposition}
    \label{p.deflate}
Let $\FT_n^k$ denote the whittled complex obtained from whittling the Khovanov chain complex $\CKh(\ftbraid_n^k)$, and let $(\sigma, \epsilon)$ be an enhanced Kauffman state (Definition \ref{d.enhanced-K-state}) on $\ftbraid_n^k$. 
Suppose $(\sigma, \epsilon)$ appears in the whittled complex $\FT_n^k$ and let $W$  be the corresponding tangle in the Temperley-Lieb monoid $\TL_n$. 
Then either (1) the word is in the form 
\[ e_{n-1}^{k_0} V_0 e_{n-1}^{k_1} V_1 \cdots V_{r}e_{n-1}^{k_{r}},  \]
where each $V_j = e_{j_1}e_{j_2}\cdots e_{n-1}$ is a subword consisting of consecutive elements in $\TL_n$ ending at $e_{n-1}$ and $k_j \geq 2$ for all $j$. (2) Or, there exists a path of $\TL$-moves (see Definition \ref{n.tlmoves}) transforming $W$ to its Jones Normal Form $\hat W = \JNF(W)$ (Definition \ref{d.jnf}), where each move is one of the following three types:
\begin{itemize}
    \item[(D1)] \label{item:D1} $e_{n-1}^2 \to e_{n-1}$
    \item[(D2)] \label{item:D2} $e_i e_j \to e_je_i$, where $|i-j|\geq 2$
    \item[(D3)] \label{item:D3} $e_ie_{i-1}e_i \to e_i$, for $2 \leq i \leq n-1$
\end{itemize}
\end{proposition}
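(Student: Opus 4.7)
The plan is to translate the survival condition of $(\sigma,\epsilon)$ in $\FT_n^k$ into explicit local constraints on the TL word $W$, and then either recognize that $W$ has the form in (1) or exhibit a reduction path to $\JNF(W)$ using only the moves D1, D2, D3. The first step is to encode each Gaussian elimination pair (Definition \ref{d.gepair}) as a subword constraint on $W$. Because positions $j,j'$ with $j'-j=n-1$ pick out two consecutive occurrences of the same braid generator $\sigma_i$ in $\ftbraid_n^k$, and the closed-component condition demands a specific barring of the intervening crossings, survival of $(\sigma,\epsilon)$ forbids certain local patterns in $W$. Recording these forbidden patterns, together with the sign on the relevant closed component that distinguishes G1 from G2, provides a dictionary that drives the remainder of the argument.

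With this dictionary in hand, I would carry out a case analysis on $W$. If every maximal block of $e_{n-1}$ in $W$ has length at least two and the subwords between consecutive $e_{n-1}$ blocks assemble into ascending runs $V_j = e_{j_1}e_{j_1+1}\cdots e_{n-1}$, then $W$ already has the form (1) and case (1) is verified. Otherwise I would produce a monotone decreasing path $W \leadsto \JNF(W)$ following the algorithm in the proof of Proposition \ref{prop:monotone-path}. At each step that algorithm invokes a move of the form $e_i^2 \to e_i$, $e_i e_{i\pm 1} e_i \to e_i$, or a commutation. The key technical claim is that for a surviving $W$ not of form (1), only the restricted variants D1 (the $i=n-1$ case of $e_i^2 \to e_i$), D3 (the $e_i e_{i-1} e_i \to e_i$ direction), and D2 (commutation) are ever needed. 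Any attempted use of the complementary moves $e_i^2\to e_i$ with $i<n-1$ or $e_i e_{i+1} e_i \to e_i$ would, through the dictionary, witness a GE pair whose source or target is $(\sigma,\epsilon)$, contradicting survival.

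The main obstacle is the bookkeeping required to make the dictionary precise. The G1 case involves a closed component marked with $-$, while the G2 case introduces an extra barred crossing $\overline{(i+1)}$ in the target and requires a $+$-marked closed component; both depend delicately on the barring pattern of the intervening crossings $\sigma_{i+1},\ldots,\sigma_{n-1},\sigma_1,\ldots,\sigma_{i-1}$. A further subtlety is that Algorithm \ref{ss.algorithm}'s leftmost-first selection rule means the constraints from ``$(\sigma,\epsilon)$ is the source of a distinguished GE'' differ from those of ``$(\sigma,\epsilon)$ is a target''; these must be combined correctly. I would organize the analysis by induction on the length of $W$, handle base cases by direct inspection, and propagate the Jones-algorithm reduction one move at a time, verifying at each step that the move used is one of D1, D2, D3 by invoking the forbidden-pattern dictionary.
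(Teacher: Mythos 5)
Your overall intuition---that survival in $\FT_n^k$ forbids certain local patterns in the barred braid word, and that these prohibitions should force the reduction to $\JNF(W)$ to use only D1, D2, D3---matches the spirit of the paper's argument. But there is a genuine gap at exactly the point you flag as ``the key technical claim.'' Your justification for that claim is that any use of a forbidden move (say $e_i^2 \to e_i$ with $i < n-1$, or $e_i e_{i+1} e_i \to e_i$) would ``witness a GE pair whose source or target is $(\sigma,\epsilon)$.'' That argument only applies to the \emph{initial} word $W$, which sits on the braid $\ftbraid_n^k$ and carries positional data. The intermediate words $W_1, W_2, \ldots$ produced by the algorithm of Proposition \ref{prop:monotone-path} are abstract elements of $\TL_n$ with no associated Kauffman state, so the forbidden-pattern dictionary says nothing about them; a pattern like $e_i^2$ with $i<n-1$ could in principle be \emph{created} partway through the reduction even though it is absent from $W$. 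To rule this out you need global structural control over $W$, not just a list of excluded subwords.

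The paper obtains that control by inducting on the number of strands rather than on $\len(W)$: writing $S = e_n^{k_0}V_0 e_n^{k_1}V_1\cdots V_r e_n^{k_{r+1}}$, it uses the survival condition to force each $V_j$ to end in $e_{n-1}$, to force $V_j$ to be an ascending run when $k_j>1$ (giving Case (1)), and---via a cascade of G2 targets that eventually reaches the bottom strand---to force $r=0$ in Case (2). Once $S = e_n^{k_0}V_0 e_n^{k_1}$ with $V_0 \in \TL_n$ in Jones Normal Form by the strand induction, the only reductions left involve the top generator, and these are precisely D1, D2, and D3. Your proposal contains neither the $r=0$ argument nor a mechanism (your induction is on word length) for assuming the inner blocks are already in normal form, so the reduction path you invoke is not actually shown to avoid the forbidden moves. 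To repair the proposal you would need to replace the move-by-move audit of the generic Jones algorithm with a derivation of this global form of $W$ from the survival constraints.
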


\begin{remark}
    
Our proof shares similarity with Proposition \ref{prop:monotone-path}, which shows the existence of the Jones Normal Form for a word in $\TL_n$. The key difference is that we consider a much smaller subset of the possible words representing generators in the whittled complex, thanks to whittling by distinguished Gaussian elimination isomorphisms in $\G$. In particular, any word in $\TL_n$ containing subwords of the form $e_i^2$ and $e_ie_{i+1}e_i$ with $1 \leq i \leq  n-2$ does not appear in the whittled complex. 
\end{remark}

Before providing the proof of Proposition \ref{p.deflate}, we consider the following motivating examples.
\subsubsection{Motivating examples.}  \label{ss.motivatingegs}
\begin{itemize}
\item 3-braids: \\
An element in $\TL_3$ is of the form: 
\[ e^{k_0}_2 e_1^{\ell_0} e_2^{k_1} e_1^{\ell_1} \cdots e^{k_n}_2 e^{\ell_m}_1.\]
Note that $\ell_i =1$ for all $0\leq i\leq m$, since otherwise the word would contain $e_1^2$, a subword which supports a G1 Gaussian elimination isomorphism. Thus a word is of one of the following forms:
\begin{enumerate}
\item $e^{k_0}_2 e_1 e_2^{k_1} e_1 \cdots e^{k_n}_2 e_1$, $k_j \geq 2$. 
\item $e^{k_0}_2 e_1 e_2^{k_1} e_1 \cdots e^{k_n}_2$, $k_j \geq 2$.  
\item $e_1 e_2^{k_1} e_1^{\ell_1} \cdots e^{k_n}_2 e_1$, $k_j \geq 2$. 
\item $e_1 e_2^{k_1} e_1^{\ell_1} \cdots e^{k_n}_2$, $k_j \geq 2$. 
\end{enumerate}

\item 4-braids: \\ 
An element in $\TL_4$ is of the form: 
\[ e^{k_0}_3V_0e^{k_1}_3V_1\cdots V_{m-1}e_3^{k_m},\]
where the $V_i$'s are words in $\TL_3$ in the Jones Normal Form. Following deflation as specified by Theorem \ref{t.deflate}, we get that a word is one of the following forms: 
\begin{enumerate}
\item $e^{k_0}_3e_{i_{k_0}}e_{i_{k_0}+1} \cdots e_2 e^{k_1}_3e_{i_{k_1}}e_{i_{k_1}+1}\cdots e_2 e_3^{k_{m-1}}\cdots e_2e_3^{k_m}$
\item $e_3^{k_0}Ve_3^{k_1}$, where $V$ is a word in $TL_3$ in the Jones Normal Form.
\end{enumerate} 
\item 5-braids: \\ 
An element in $TL_5$ has one of the following forms: 
\begin{enumerate}
\item $e^{k_0}_4e_{i_{k_0}}e_{i_{k_0}+1} \cdots e_3 e^{k_1}_4e_{i_{k_1}}e_{i_{k_1}+1}\cdots e_3 e_4^{k_{m-1}}\cdots e_{i_{k_{m-1}}}e_{i_{k_{m-1}}+1}\cdots e_3e_4^{k_m}$
\item $e_4^{k_0}Ve_4^{k_1}$, where $V$ is a word in $TL_3$ in the Jones Normal Form.
\end{enumerate} 
\end{itemize}

\begin{proof} 
We provide a proof of Proposition \ref{p.deflate} by induction on the number of strands of the braid $\ftbraid_n^k$.  

\paragraph{\textbf{The base case $n=2$.}}
On two strands, every word of the form $e_{1}^m$ for $m>1$ is reduced to a single copy of $e_{1}$ by the move D1. However, we can say more.  As the target of a G1 Gaussian elimination isomorphism, any enhanced Kauffman state with at least one closed component marked with a $-$ does not appear in the whittled complex $\FT^k$. The remaining state in $\FT^k$ is the enhanced Kauffman state which marks every closed component of $e_{1}^m$, for some $m$, by a $+$, and we can represent it as $e_1$ with the appropriate degree shift. This observation generalizes to Kauffman states on $n$-braids, for all $n$, where the states contain a subword of the form $e_{i}^m$, where $1 \leq i \leq n-2$. 

\paragraph{\textbf{The case $n\geq 3$.}} It is this case where moves D2 and D3 are used. 

Let $(s, \epsilon)$ be an enhanced Kauffman state in the whittled complex $\FT^k$, and let $S$ be the corresponding word of the Temperley-Lieb monoid $\TL_{n+1}$.

\textbf{Induction Hypothesis.} Assume that for all $n' \leq n$, a barred braid word on $n'$ strands, not in the form of Case (1) in the statement of Proposition \ref{p.deflate}, may be reduced to the Jones Normal Form via moves D1--D3, appropriate to $n'$. 

\textbf{Inductive Step.} We show that a word in $\TL_{n+1}$ may be reduced to the Jones Normal Form via moves D1-D3. 

Let $s$ be a barred braid word representing the Kauffman state of a generator $(\sigma, \epsilon)$ in $\FT_{n+1}^k$, with corresponding Temperley-Lieb word $S \in \TL_{n+1}$. 
First we locate all the consecutive $e_n$'s in the word $S$, and write 
\[ S = e^{k_0}_n \cdot V_0 \cdot e^{k_1}_n  \cdot V_1 \cdot e^{k_2}_n \cdots e^{k_{r-1}}_n \cdot V_{r-1} \cdot e^{k_{r}}_n \cdot V_{r} \cdot e^{k_{r+1}}_n. \]
Note that we can assume the copies of $e_n$ are consecutive in the braid word $\ftbraid_n^k$, meaning that in the barred braid word $s$ of $S$, there is no unbarred $n$ between two barred $n$'s, since otherwise the word $s$ would contain the support of a G1 Gaussian elimination isomorphism. In addition, all the closed components corresponding to the copies of $e_n$ are marked with a $+$. If any closed component is marked with a $-$, then the word contains the support of a G2 Gaussian elimination isomorphism. 

Assume that each $V_j \in \TL_{n}$, for $0\leq j \leq r+1$, is in Jones Normal Form. 
We focus on the last segment of the word $S$: 
\[ \cdots V_{r-1} \cdot  e^{k_r}_n \cdot V_{r} \cdot e^{k_{r+1}}_n \]
Since $V_{r-1}$ is nonempty, the last letter of $V_{r-1}$ is barred in $s$. Suppose the last letter of $V_{r-1}$ is not $e_{n-1}$.  Then if $k_r > 1$, $s$ contains a source of a G1 Gaussian elimination isomorphism, and therefore does not appear in the whittled complex. The same reasoning applies to $V_j$, for $0\leq j \leq r-1$. Thus we may assume all of them end with $e_{n-1}$. 
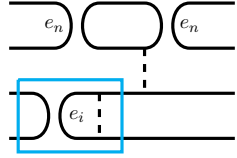
\begin{figure}[H]\centering 
    \begin{tikzpicture}[every node/.style={scale=.7}, scale=.6]

\begin{scope}[ rotate=-90]  
\draw[very thick] (1,1.5) to (1,0.5) .. controls ++(0,-.5) and ++(0,-.5) .. (0,0.5) to (0,1.5);
\draw[very thick] (2,1.5) to (2,-2) .. controls ++(0,-.5) and ++(0,-.5) .. (3,-2) to (3,1.5);
\draw[very thick] (1,-1.5) to (1,-0.5) .. controls ++(0,.5) and ++(0,.5) .. (0,-0.5)--(0,-1.5) .. controls ++(0,-0.5) and ++(0,-.5) .. (1,-1.5);
\node at (0.5, -2.5){$e_n$}; 
\node at (0.5, 0.5){$e_n$}; 
\node at (2.5, -2){$e_i$}; 
\draw[very thick] (0,-3.5)  to (0,-2.5) .. controls ++(0,.5) and ++(0,.5) .. (1,-2.5)--(1,-3.5);
\draw[very thick] (2,-3.5)  to (2,-3) .. controls ++(0,.5) and ++(0,.5) .. (3,-3)--(3,-3.5);
\draw[very thick, dashed] (2, -1.5) to (3, -1.5);
\draw[very thick, dashed] (1, -.5) to (2, -0.5);
\draw[very thick, cyan] (1.7, -3.3) to (1.7, -1) to (3.3, -1) to (3.3, -3.3) to (1.7, -3.3);
 \end{scope}
 \end{tikzpicture}
 \caption{ Source of G1 Gaussian elimination isomorphism.} \label{f.srgg1}
\end{figure}
Let $V_j'e_{n-1} = V_j$. We have the following picture near $V_j$ (see Figure \ref{f.top2strands}) involving the top two strands of the braid.  
\begin{figure}[H]
\begin{tikzpicture}[every node/.style={scale=.9}, scale=.6]
\begin{scope}[rotate=-90, scale=.8]

\draw[very thick] (0,2.5) to (0,3.5) .. controls ++(0,.5) and ++(0,.5) .. (-1,3.5)--(-1,2.5) .. controls ++(0,-0.5) and ++(0,-.5) .. (0,2.5);

\draw[very thick] (-1,10) to (-1,8.5).. controls ++(0,-.5) and ++(0,-.5) .. (0,8.5) to (0,10);

\draw[very thick] 
(1,4.5) to (1,6.5)
.. controls ++(0,.5) and ++(0,.5) .. 
(0,6.5) to (0,4.75) 
.. controls ++(0,-0.5) and ++(0,-.5) .. (-1,4.75) to (-1, 7.5)  
.. controls ++(0,.5) and ++(0,.5) .. 
(0, 7.5)  
.. controls ++(0,-0.5) and ++(0,-.5) .. 
(1,7.5) to (1,10)
;

\draw[very thick] 
(-1,-1.5)  to (-1,1.25) .. controls ++(0,.5) and ++(0,.5) .. 
(0,1.25) 
.. controls ++(0,-0.5) and ++(0,-.5) .. 
(1,1.25) to (1,4.75);

\draw[very thick] (0,-1.5)  to (0,0) .. controls ++(0,.5) and ++(0,.5) .. (1,0)--(1,-1.5);

\draw[very thick] (0,-0.5)  to (0,0) .. controls ++(0,.5) and ++(0,.5) .. (1,0)--(1,-0.5);

\draw[very thick, dotted, white]
(1,3.2) to (1,6);
\draw [decorate,decoration={brace,amplitude=5pt,mirror}]
  (1.5,3.2) -- (1.5,6) node[midway,yshift=-1em]{$V_j'$};
\draw[very thick, dotted, white]
(0,5.2) to (0,6);
\draw[very thick, dotted, white]
(-1,5.2) to (-1,6);
\node[very thick] at (-0.5, 5.5){$\cdots$}; 

\node at (-0.5, 1){$e_n$}; 
\node at (-0.5, 2.7){$e_n$};
\node at (-0.5, 9){$e_n$}; 
\node at (.5, -0.5){$e_{n-1}$}; 
\node at (.5, 8){$e_{n-1}$}; 
\end{scope}
  
   \end{tikzpicture}
\caption{ $S = \cdots e_{n-1} e_n V_j'e_{n-1}e_{n}  \cdots$}\label{f.top2strands}
\end{figure}
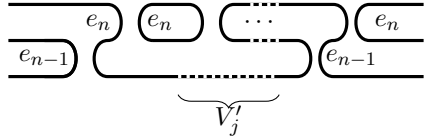

If $k_r>1$, we consider $V_j'=V_j''e_{k}e_{n-1}$. Suppose $k < n-2$, then the subword contains the support of a G1 Gaussian elimination isomorphism, and therefore does not appear in the whittled complex. Therefore $k = n-2$ and we can continue to reason in this way to find that $V_j = e_{j_1} e_{j_2}\cdots e_{n-1}$, where $j_{\ell+1} = j_{\ell} + 1$.   

If $V'_{j}$ does not contain $e_{n-2}$ and $k_r = 1$, then the subword $e_{n-1}e_n V'_j e_{n-1}e_n$ is the target $e_{n-1}e_ne_{n-1}$ of a G2 Gaussian elimination isomorphism. Therefore, we may assume that  $V'_j$ contains $e_{n-2}$, and we may write, for $V_j' = V''_{j}e_{n-2}$,  
\[ S = \cdots e_{n-1}e_n V''_{j}e_{n-2}e_{n-1}e_n V''_{j+1}e_{n-2}e_{n-1}e_n \cdots. \]
But the subword of $S$ shown, potentially contains a target for a G2 Gaussian isomorphism, namely  the subword $e_{n-2}e_{n-1}e_{n-2} \subset e_{n-1}e_n V''_{j}e_{n-2}e_{n-1}e_n V''_{j+1}e_{n-2}e_{n-1}e_n$, if $e_{n-3}\notin V''_{j+1}$. Therefore, we may assume $e_{n-3}\in V''_{j+1}$, and so on. Eventually, we hit the bottom strand of the braid, and so the generator does not appear in the whittled complex $\FT^k_n$.

In particular, this forces $r = 0$, and we have 
\[ S = e^{k_0}_n \cdot V_0 \cdot e^{k_1}_n. \]
Now since $V_0 \in \TL_n$, we may apply the induction hypothesis and replace $V_0$ by its Jones Normal Form. If $k_0 = 0$ or $k_1 = 0$, then we are done up to applying some number of commutation moves to push $e_n$ to the right of any elements in $V_0$ that it commutes with. Thus, we may assume $k_0, k_1 \not= 0$. At this point, the word $S$ has the form
\begin{align*}
S &= \underbrace{e_n \cdot e_n \cdots e_n}_{\text{$k_0$ times}} \cdot V_0 \cdot \underbrace{e_n\cdot e_n \cdots e_n}_{\text{$k_1$ times}} \\ 
&= \underbrace{e_n \cdot e_n \cdots e_n}_{\text{$k_0$ times}} \cdot V'_0 e_{n-1} \cdot \underbrace{e_n\cdot e_n \cdots e_n}_{\text{$k_1$ times}}. \\ 
\end{align*}

Now assume $k_1 > 1$. Then $V_0 = V_0'e_{n-1} = V_0''e_{n-2}e_{n-1} = V_0'''e_{n-3}e_{n-2}e_{n-1} = V''''_0\cdots$, and so on, to avoid the possibility that $S$ contains the source of a G1 Gaussian elimination isomorphism, as argued similarly in the previous paragraph where we established that $r=0$.  Thus we have the following: 
\[ S  = \underbrace{e_n \cdot e_n \cdots e_n}_{\text{$k_0$ times}} \cdot \underbrace{e_{v}e_{v+1}\cdots e_{n-2} e_{n-1}}_{V_0} \cdot \underbrace{e_n\cdot e_n \cdots e_n}_{\text{$k_1$ times}}\]

Applying move D1: $e^2_n \to e_n$ repeatedly to reduce $e_n^{k_0}$ and $e_n^{k_1}$ to $e_n$ yields the following: 
\[ S = e_n \cdot V'_0 e_{n-1}\cdot e_n. \]

Then applying move D3: $e_ie_{i-1}e_i \to e_i$, with $i=n$, sends $e_ne_{n-1}e_n$ to $e_n$, and then we are left with: 
\[ S = V'_0 \cdot e_n,\] which is in Jones Normal Form, as desired. 

Henceforth, we assume $k_1 = 1$. Recall that the word $V_0$ in Jones Normal Formal can be written as
\[ V_0 = (e_{i_1}e_{i_1-1} \cdots e_{j_1}) \cdots (e_{i_v}e_{i_v-1} \cdots e_{j_v}), \]
with 
  \[0 < i_1 < i_2 < \cdots <i_v < n, \qquad 0 < j_1 < j_2 < \cdots < j_v < n, \]
    and 
    \[j_1 \leq i_1, j_2 \leq i_2, \cdots, j_v \leq i_v.\]
Since 
\begin{align*} 
S &= e^{k_0}_n V_0 e_n \\ 
&= e^{k_0}_n (e_{i_1}e_{i_1-1} \cdots e_{j_1}) \cdots (e_{i_v}e_{i_v-1} \cdots e_{j_v}) e_n, \\ 
\intertext{we can apply move D1 to simplify $e^{k_0}_n$ to $e_n$. If $V_0$ commutes with $e_n$, then $e_n V_0 e_n \subset S$ is the target of a G1 Gaussian elimination isomorphism if the closed component corresponding to $e^2_n$ is marked with a $-$. Otherwise, the component remains. If $i_v < n-1$, then we may commute $e_n$ all the way to the right of $V_0$ and reduce the word via the move D1. The resulting word is in Jones Normal Form. Otherwise, we assume $i_v = n-1$. After possibly applying these moves, we have the following:}
S &\sim e_n (e_{i_1}e_{i_1-1} \cdots e_{j_1}) \cdots (e_{n-1}e_{n-2} \cdots e_{j_v}) e_n. \\
\intertext{By commutation move (D2) and the application of move D3: $e_ne_{n-1}e_n \to e_n$, we can remove $e_{n-1}$ and end up with the following:} 
S &\sim (e_{i_1}e_{i_1-1} \cdots e_{j_1}) \cdots (e_{n-2} \cdots e_{j_v}) e_n. \\
\intertext{It is possible to have $i_{v-1} = n-2$, in which case we have a copy of $e^2_{n-2}$ or $e_{n-2}e_{n-3}e_{n-2}$ to reduce:} 
S &\sim (e_{i_1}e_{i_1-1} \cdots e_{j_1}) \cdots (e_{n-2}e_{n-3} \cdots e_{j_{v-1}}) (e_{n-2} \cdots e_{j_v}) e_n. \\
\end{align*}
Repeated application of moves D1, D2, or D3 as needed, reduces the word to Jones Normal Form. 
\end{proof} 

\section{Enumerating the generators of the whittled complex} \label{s.enumerate}
We now reverse the process in Theorem \ref{t.deflate} (Proposition \ref{p.deflate}) to bound the number of generators of the whittled complex $\FT^k_n$ at a fixed homological grading. First for Case (2) of Theorem \ref{t.deflate}, we describe a more efficient way to obtain an enhanced Kauffman state in $\FT^k_n$  from a word in the Temperley-Lieb monoid $\TL_n$, compared to just taking a word with length less than or equal to the given homological degree and performing an arbitrary combination of the moves D1-D3. 

\begin{lemma} \label{l.enumerate_whittled} Fix the (nonzero) homological degree $h$ of the chain complex $\FT^k_n$, and let $s$ be a Kauffman state described by Case (2) of Theorem \ref{t.deflate} with homological grading $h(s) = h$ and associated word $S\in \TL_n$. Then the word $S$ can be obtained from a word $V \in \TL_{n-1}$ in Jones Normal Form with $\len(V)\leq h$ as follows: 
\begin{enumerate}
    \item Decide whether there is an $e_{n-1}$ in the word $S$. If not, then $V$ only appears in homological grading $h$ if $\len(V) = h$. 
    \item If there is an $e_{n-1}$ in the word $S$, then $S$ can be written in the form
    \[ e^{k_0}_{n-1} Ve^{k_1}_{n-1},  \] where at least one of $k_0, k_1 \not=0$. 
\end{enumerate}
\end{lemma}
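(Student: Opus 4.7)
The plan is to unpack the structure theorem proved in the last paragraphs of Proposition \ref{p.deflate}, which already establishes most of the content of the lemma; the remaining work is to match the quantitative length bound to the homological degree and to cleanly separate the sub-case in which no $e_{n-1}$ appears.

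First I would translate the homological grading into a length statement on the Temperley--Lieb side. Since every crossing of the braid $\ftbraid_n^k$ is positive, we have $c_-(D) = 0$, so by definition $h = r_s$, the number of $1$-resolutions chosen by the state $s$. Each barred letter in the barred braid word for $s$ contributes exactly one generator to the associated Temperley--Lieb word $S$, so $h = \len(S)$. This is the bookkeeping that links the homological degree to length data on the $\TL$ side.

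Next I would invoke the structural output of Proposition \ref{p.deflate}. When $s$ belongs to Case (2), the proof there showed (in particular forcing $r = 0$) that $S \in \TL_n$ admits a representative of the form $e_{n-1}^{k_0}\,V_0\,e_{n-1}^{k_1}$ with $V_0 \in \TL_{n-1}$ and $k_0, k_1 \geq 0$. Applying Proposition \ref{prop:monotone-path} inside $\TL_{n-1}$, I then replace $V_0$ by its Jones Normal Form $V := \JNF(V_0)$ via a monotone decreasing sequence of moves from (D1)--(D3), none of which introduces an $e_{n-1}$ (since $V_0$ uses only $e_1, \ldots, e_{n-2}$, and (D1)--(D3) restricted to those generators preserve this property). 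Hence $V \in \TL_{n-1}$, $V$ is in Jones Normal Form, and $\len(V) \leq \len(V_0)$.

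Finally, I would split into the two sub-cases in the statement. If $k_0 = k_1 = 0$, then $S = V$ as elements of $\TL_n$, no $e_{n-1}$ appears in (the JNF representative of) $S$, and $\len(V) = \len(S) = h$; this is sub-case (1). Otherwise at least one of $k_0, k_1$ is strictly positive, so $S$ takes the form $e_{n-1}^{k_0}\,V\,e_{n-1}^{k_1}$ with $\len(V) = h - k_0 - k_1 \leq h - 1 < h$; this is sub-case (2). In both cases $\len(V) \leq h$, as required. The main obstacle is being precise about the identification $h = \len(S)$ and about the fact that moves (D1)--(D3) carried out on $V_0 \in \TL_{n-1}$ do not introduce the generator $e_{n-1}$; beyond that, the lemma is essentially a repackaging of the structural output of Proposition~\ref{p.deflate}, tailored for the counting argument in the remainder of Section~\ref{s.enumerate}.
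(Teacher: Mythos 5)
Your proposal is correct and follows the same route as the paper, whose entire proof is the single sentence that the lemma follows from the proof of Case (2) of Proposition \ref{p.deflate}; you simply make explicit the bookkeeping ($c_-=0$ for the positive braid $\ftbraid_n^k$, so $h=r_s=\len(S)$) and the case split on whether $k_0=k_1=0$. The only nitpick is that in your sub-case (2) the equality $\len(V)=h-k_0-k_1$ presupposes that $V_0$ is already in Jones Normal Form --- in general one only gets $\len(V)\le h-k_0-k_1$ after passing to $\JNF(V_0)$ --- but this does not affect the required conclusion $\len(V)\le h$.
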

\begin{proof}
    This follows from the proof of Case (2) of Theorem \ref{t.deflate} (Proposition  \ref{p.deflate}). 
\end{proof}

Therefore, to enumerate the number of generators in the whittled complex $\FT^k_n$, we simply need to enumerate the number of words in $\TL_{n-1}$ up to a fixed homological degree, then increase the length of the word by the application of the move D1 or D3 (note the D2 move which is commutation does not change the length of the word) to hit the homological degree. We believe the periodicity of the Kauffman states can also be derived, but we do not pursue a precise statement here.

\subsection{The ordered partition function}  
\begin{definition}[Partition of integer] \label{d.opartition} A \textit{partition} of a positive integer $n$ is a way of writing $n$ as a sum of positive integers. An \textit{ordered partition} of a nonnegative integers $n$ is a way of writing $n$ as a sum of positive integers, where the distinct orders of the positive integers (representable by a sequence) give distinct ordered partitions. We denote the number of ordered partitions of a nonnegative integers $n$ into $k$ parts by $p(n, k)$. 
\end{definition}

The following is a well-known application of the \textit{Stars-and-Bars} technique from combinatorics. 

\begin{theorem}[Stars and Bars] \label{t.starsbars}
    The number of ways to write a positive integer $n$ as an ordered sum of $k$ positive integers is 
    \[ p(n, k) = { {n-1} \choose {k-1}} . \]
\end{theorem}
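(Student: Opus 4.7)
The plan is to establish the identity by exhibiting an explicit bijection between ordered partitions of $n$ into $k$ positive parts and $(k-1)$-element subsets of an $(n-1)$-element set, after which the formula $\binom{n-1}{k-1}$ follows from the definition of the binomial coefficient.

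First I would fix notation: write $n$ as a row of $n$ indistinguishable stars $\star \star \cdots \star$, and identify the $n-1$ interior gaps between consecutive stars with the set $\{1, 2, \ldots, n-1\}$. Given an ordered tuple $(a_1, a_2, \ldots, a_k)$ of positive integers with $a_1 + a_2 + \cdots + a_k = n$, I would define a map $\Phi$ sending this tuple to the subset $\{a_1,\ a_1+a_2,\ \ldots,\ a_1+a_2+\cdots+a_{k-1}\} \subseteq \{1, 2, \ldots, n-1\}$ of cardinality $k-1$. The condition that each $a_i$ is positive ensures the partial sums are strictly increasing, and the condition that their total is exactly $n$ (with $a_k \geq 1$) ensures the last partial sum is at most $n-1$, so the image indeed lies in $\{1, \ldots, n-1\}$ and has exactly $k-1$ distinct elements.

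Next I would construct the inverse $\Psi$: given a subset $\{b_1 < b_2 < \cdots < b_{k-1}\} \subseteq \{1, \ldots, n-1\}$, set $b_0 := 0$ and $b_k := n$, and define $a_i := b_i - b_{i-1}$ for $1 \leq i \leq k$. The strict inequalities $b_{i-1} < b_i$ guarantee each $a_i \geq 1$, and the telescoping sum yields $a_1 + \cdots + a_k = b_k - b_0 = n$. It is then routine to check that $\Phi \circ \Psi$ and $\Psi \circ \Phi$ are identities, so $\Phi$ is a bijection.

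Having established the bijection, I would conclude that the number of ordered partitions $p(n, k)$ equals the number of $(k-1)$-subsets of an $(n-1)$-set, namely $\binom{n-1}{k-1}$. I do not expect any genuine obstacle in this argument; the only subtlety is to verify carefully that the positivity of the parts corresponds exactly to choosing \emph{distinct} gap positions (as opposed to multisets), which is handled by the strict inequality $b_{i-1} < b_i$.
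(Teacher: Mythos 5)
Your proof is correct and uses the same stars-and-bars idea as the paper: both arguments identify an ordered partition of $n$ into $k$ positive parts with a choice of $k-1$ of the $n-1$ gaps between consecutive stars. You simply make the bijection explicit (via partial sums and the telescoping inverse) where the paper leaves it informal, which is a welcome but not substantively different treatment.
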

\begin{proof}
We represent the positive integer $n$ as $n$ stars $\star$: $\underbrace{\star\star\cdots\star}_{\text{$n$ times}}$ and draw $k$ bars $|$. For a string of $n$ stars, there are $n-1$ locations for the possible placements of $k$ bars, which can be represented as a $(k-1)$-tuple that encodes an ordered integer partition of $n$ into $k$ parts. For example, 
\[ \star \star | \star \star \star | \star \]
represents an ordered integer partition dividing $6$ into 3 parts. Out of $n-1$ integers, we choose $k-1$ elements, and the binomial coefficient gives the formula. 
\end{proof}

\begin{lemma} \label{l.nTL}
The number of words in $\TL_n$ in Jones Normal Form of a fixed length $h$, denoted $N(n, h)$, is given by 
 \begin{equation} \label{e.N(n, h)} N(n, h) = \sum_{k \leq h} {{n-1} \choose {k}} p(h, k). \end{equation}
\end{lemma}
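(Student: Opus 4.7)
The plan is to set up a bijection between words of $\TL_n$ in Jones Normal Form of length $h$ and pairs consisting of a subset $I \subseteq \{1,\dots,n-1\}$ together with an ordered composition of $h$ into $|I|$ positive parts. Combined with Stars and Bars (Theorem~\ref{t.starsbars}), this will yield the stated enumeration.

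I would begin by recalling from Definition~\ref{d.jnf} that every JNF word has the form $W_t = (e_{i_1}e_{i_1-1}\cdots e_{j_1})\cdots(e_{i_k}e_{i_k-1}\cdots e_{j_k})$ and is uniquely determined by a tuple $t = (i_1,\dots,i_k,j_1,\dots,j_k) \in E_{n,k}$. Since the length of $W_t$ equals $\sum_{\ell=1}^{k}(i_\ell - j_\ell + 1)$, JNF words of length $h$ with exactly $k$ descending runs correspond to those tuples in $E_{n,k}$ for which $\sum_{\ell=1}^{k}(i_\ell - j_\ell + 1) = h$. In particular, $k$ is bounded above by $h$ (each run has length at least $1$), which sets the range of summation.

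Next, setting $m_\ell := i_\ell - j_\ell + 1$ and $I := \{i_1,\dots,i_k\}$, I would define a map $\Phi_k$ sending such a tuple to the pair $(I,\mu)$ where $\mu = (m_1,\dots,m_k)$ is a composition of $h$ into $k$ positive parts. The map is injective with inverse given by listing $I$ in increasing order as $(i_1,\dots,i_k)$ and setting $j_\ell := i_\ell - m_\ell + 1$. To count the target: the subset $I$ ranges over $\binom{n-1}{k}$ possibilities, and by Stars and Bars the composition $\mu$ ranges over $p(h,k)$ possibilities, so summing over $1 \leq k \leq h$ produces the claimed total $\sum_{k \leq h}\binom{n-1}{k}p(h,k)$.

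The main obstacle is surjectivity of $\Phi_k$: given a pair $(I,\mu)$, the induced values $j_\ell = i_\ell - m_\ell + 1$ are not automatically guaranteed to satisfy $j_\ell \geq 1$ nor $j_1 < j_2 < \cdots < j_k$. For instance, if some $m_\ell$ exceeds the corresponding $i_\ell$, then $j_\ell \leq 0$; and if $m_\ell - m_{\ell+1} > i_{\ell+1} - i_\ell$, then $j_\ell \geq j_{\ell+1}$. The proof will need either to exhibit a bijective correction of such degenerate pairs to genuine elements of $E_{n,k}$ of the same length (for example, by rearranging the composition according to a canonical rule, or by absorbing the offending parts into adjacent descending runs), or to settle for the one-sided bound $N(n,h) \leq \sum_{k \leq h}\binom{n-1}{k}p(h,k)$ coming from injectivity alone, which still suffices to feed into the enumeration of $C(\FT_n^k,h)$ in Theorem~\ref{t.count}.
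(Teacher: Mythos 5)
Your decomposition is exactly the one the paper uses: choose the subset $I=\{i_1,\dots,i_k\}$ of run-tops (giving $\binom{n-1}{k}$ choices) and then the composition $\mu=(m_1,\dots,m_k)$ of $h$ into $k$ positive run-lengths (giving $p(h,k)$ choices by Stars and Bars). The difference is that the paper asserts this correspondence counts JNF words exactly, while you correctly flag that $\Phi_k$ is only injective: the paper's proof never checks that an arbitrary pair $(I,\mu)$ yields $j_\ell = i_\ell - m_\ell + 1 \geq 1$ and $j_1 < \cdots < j_k$, and in fact it need not. Your concern is not a technicality --- the stated equality is false. Take $n=3$, $h=2$: the JNF words of length $2$ in $\TL_3$ are exactly $e_2e_1$ and $e_1e_2$, so $N(3,2)=2$, whereas the formula gives $\binom{2}{1}p(2,1)+\binom{2}{2}p(2,2)=2+1=3$; the overcount comes from the degenerate pair $I=\{1\}$, $\mu=(2)$, which would force $j_1=0$.

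So there is no ``bijective correction'' to be found, and your fallback is the right resolution: injectivity of $\Phi_k$ gives
\[
N(n,h) \;\leq\; \sum_{k\leq h}\binom{n-1}{k}\,p(h,k),
\]
and since Lemma~\ref{l.nTL} is only ever used to produce the upper bound in Theorem~\ref{t.count} (Proposition~\ref{p.count}), the inequality is all that is needed downstream. In short, your attempt follows the paper's route but is more careful than the paper's own proof, which silently treats an injection as a bijection; the lemma should be restated as an upper bound (or the count of valid tuples in $E_{n,k}$ of length $h$ computed honestly, which is a genuinely different and harder enumeration).
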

\begin{proof}
 It suffices to enumerate the number of words in Jones Normal Form with the fixed length $k$. According to Definition \ref{d.jnf}, the Jones Normal Form of a word in $\TL_n$ is determined by the $2k$-tuple $(i_1,\ldots, i_k, j_1, \ldots, j_k)$ satisfying 
 \begin{equation}  0 < i_1 < i_2 < \ldots <i_k < n, \qquad 0 < j_1 < j_2 < \ldots < j_k < n, \end{equation}
    and 
 \begin{equation} \label{e.jlessthani}  j_1 \leq i_1, j_2 \leq i_2, \ldots, j_k \leq i_k. \end{equation}
By \eqref{e.jlessthani}, we first enumerate the number of increasing sequences $0<i_1<i_2<\ldots < i_k <n$ with $k\leq h$. 

There are $n-1$ elements in the set $\{1, \ldots, n-1\}$. Take a $k$ element subset. There are ${n-1} \choose {k}$ such subsets. Each of these subsets can be rearranged into a strictly increasing sequence $0<i_1<i_2<\ldots < i_k <n$. 

Now a $k$-tuple $(j_1, j_2, \ldots, j_k)$ that satisfies the conditions in Definition \ref{d.jnf}, namely $j_{\ell}\leq i_{\ell}$ for all $1\leq \ell \leq k$, determines the Jones Normal Form of the Temperley-Lieb element. With the condition on the word specified by the $2k$-tupe $(i_1, i_2, \ldots, i_k, j_1, j_2, \ldots, j_k)$, this corresponds to an ordered partition of the nonnegative integer $h$ into $k$ parts. Thus for a given $k$-tuple $(i_1, i_2, \ldots, i_k)$, there are 
\[ p(h, k) \]
possibilities by Theorem \ref{t.starsbars}.
In total, we have 
\[ \sum_{k\leq h} {{n-1} \choose {k}} p(h, k)\]
possibilities.  
\end{proof}


We are now ready to prove Theorem \ref{t.count}, which we restate here as the proposition below, for the convenience of the reader. In the statement of the proposition, $C_n$ stands for the $n$th Catalan number.

\begin{proposition} \label{p.count}
            For a fixed homological degree $h$, let $C(\FT^k_n, h)$ be the number of Kauffman states $s$ for which $(s, \epsilon)$ appears in the whittled complex $\FT^k_n$ and the homological degree of $(s, \epsilon) = h$. Then  
            \begin{equation}\label{eqp.count}
                C(\FT^k_n, h) \leq  \left( \sum_{m\leq h} p(h, m) \right) + N(n, h) +  (p(n, 2)+2)C_n.
            \end{equation}   
\end{proposition}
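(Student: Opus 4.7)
\emph{Plan for the proof.} The strategy is to classify each Kauffman state $s$ with $(s,\epsilon) \in \FT^k_n$ at homological degree $h$ according to the dichotomy of Theorem~\ref{t.deflate}, and then to bound each of the two resulting families separately, matching the bounds to the three summands in \eqref{eqp.count}.

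First I would handle Case~(2) of Theorem~\ref{t.deflate}: the word $W \in \TL_n$ admits a path of D1--D3 moves to its Jones Normal Form $\hat W$. I would split this into two subcases. When $|\hat W|=h$, the word is itself in Jones Normal Form, and by Lemma~\ref{l.nTL} the number of such states is at most $N(n,h)$, which yields the middle summand of~\eqref{eqp.count}. When $|\hat W|<h$, Lemma~\ref{l.enumerate_whittled} tells us that $W$ has the shape $e_{n-1}^{k_0} V e_{n-1}^{k_1}$, where $V \in \TL_{n-1}$ is in Jones Normal Form, $k_0+k_1+|V|=h$, and at least one of $k_0, k_1$ is strictly positive. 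The triple $(k_0, |V|, k_1)$, after discarding any zero entries, is an ordered partition of $h$ into $m \in \{1,2,3\}$ parts; summing the number of such ordered partitions over all $m\leq h$ and invoking Theorem~\ref{t.starsbars} gives the first summand $\sum_{m\leq h} p(h,m)$.

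Next I would bound the number of Case~(1) states, namely those with word $e_{n-1}^{k_0} V_0 e_{n-1}^{k_1} V_1 \cdots V_r e_{n-1}^{k_r}$ where $k_j \geq 2$ for all $j$ and each $V_j = e_{j_1}e_{j_1+1}\cdots e_{n-1}$ is a consecutive increasing run ending at $e_{n-1}$. Such a word is determined by the sequence of block lengths $(k_0, |V_0|, k_1, \ldots, k_r)$ together with the starting index $j_1$ of each $V_j$. I would encode the rigid non-crossing tangle shape underlying each word by a crossingless matching in $\TL_n$, of which there are exactly $C_n$, and then bound the residual combinatorial freedom (the distribution of the multiplicities across the $e_{n-1}$-blocks, subject to $k_j \geq 2$) by $p(n,2)+2$ using a stars-and-bars argument analogous to Theorem~\ref{t.starsbars}. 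Multiplying these two factors yields the third summand $(p(n,2)+2)C_n$.

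The main obstacle I anticipate lies in the Case~(1) estimate: one must verify that the combinatorial shape of such a word really can be captured by at most $C_n$ non-crossing diagrams, and that the residual multiplicity data is controlled by the relatively small factor $p(n,2)+2=n+1$. Once the two case bounds are established, adding them gives~\eqref{eqp.count} and completes the proof.
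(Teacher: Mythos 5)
Your decomposition into the two cases of Theorem~\ref{t.deflate} matches the paper, but you have attached the wrong summand of \eqref{eqp.count} to each family, and with your assignment neither individual bound actually holds.

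In the paper, the first summand $\sum_{m\leq h} p(h,m)$ is the bound for the Case~(1) words $e_{n-1}^{k_0}V_0e_{n-1}^{k_1}\cdots V_re_{n-1}^{k_r}$ (it records the ordered partitions of $h$ coming from the exponent data), while the third summand $(p(n,2)+2)C_n$ is the bound for the Case~(2) words of the shape $e_{n-1}^{k_0}Ve_{n-1}^{k_1}$ from Lemma~\ref{l.enumerate_whittled}: the Catalan number $C_n$ dominates the number of possible $V\in\TL_{n-1}$ in Jones Normal Form, and $p(n,2)+2$ dominates the choices of the pair $(k_0,k_1)$. You have swapped these roles, and both halves of the swap break. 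For the subfamily $e_{n-1}^{k_0}Ve_{n-1}^{k_1}$ you count only the ordered partition $(k_0,|V|,k_1)$ of $h$ into at most three parts; this ignores that many distinct Jones Normal Form words $V\in\TL_{n-1}$ share the same length, so the number of such states is not bounded by $\sum_{m\leq h}p(h,m)$ (already for $h=3$ and $n$ large there are on the order of $\binom{n-2}{2}$ choices of $V$ of length $2$, far exceeding $\sum_{m\leq 3}p(3,m)=4$). Conversely, for Case~(1) you propose the $h$-independent bound $(p(n,2)+2)C_n=(n+1)C_n$, but the number of Case~(1) words at homological degree $h$ grows with $h$, since the number of blocks $r$ and the multiplicities $k_j\geq 2$ vary with $h$ (e.g.\ for $n=3$ one already gets all compositions of $h-r$ into $r+1$ parts each at least $2$); no bound independent of $h$ can cover this family, which is exactly why the paper reserves the $h$-growing term $\sum_{m\leq h}p(h,m)$ for it.

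To repair the argument you should restore the paper's pairing: bound Case~(1) by $\sum_{m\leq h}p(h,m)$ via the ordered-partition encoding of the exponents $k_0,\dots,k_r$, bound the length-$h$ Jones Normal Form states by $N(n,h)$ using Lemma~\ref{l.nTL} (this part of your proposal is fine), and bound the $e_{n-1}^{k_0}Ve_{n-1}^{k_1}$ states by $(p(n,2)+2)C_n$ using the Catalan count of reduced Temperley--Lieb elements together with a stars-and-bars count of $(k_0,k_1)$.
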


\begin{proof}  
\noindent \textbf{Case 1.}
We start with the count for Case (1) of Theorem \ref{t.deflate}. In this case, a word $W$ corresponding to an enhanced Kauffman state $(w, \epsilon)$ in $\FT^k_n$ is of the form
\[ e_{n-1}^{k_0} V_0 e_{n-1}^{k_1} V_1 \cdots V_{r}e_{n-1}^{k_{r}},   \]
where $V_j = e_{j_1}e_{j_2}\cdots e_{n-1}$. The count depends on two tuples $(j_1)_{j=0}^r$ and $(k_0, \ldots, k_{r+1})$. Fixing the homological degree $h$, given the braid word $\ftbraid^k_n$, the sum
\begin{equation}\label{eq:m}
    m := \sum_{i=0}^{r}k_i \leq h.
\end{equation}
If $m = h,$ then the homological degree has been exhausted, and we count a single element $e_{n-1}^m$ in the enhanced Kauffman state.  In case of a strict inequality in Equation \eqref{eq:m}, let  
$ \ell:= h-m.$ Then we consider the number of ordered partitions of $h$ into $m$ parts. By Theorem \ref{t.starsbars}, this number is $p(h, m)$. Define $p(0, m) = 1$.  The total count for this case is then \begin{equation}\label{eq:1}
    \sum_{m\leq h} p(h, m).
\end{equation}

\noindent \textbf{Case 2. }
    We divide the count between two sets of Kauffman states. 
    \begin{itemize}
    \item[(2a)] $\len(S) = h$: The count is equal to the number of Temperley-Lieb elements of length $h$. By Lemma \ref{l.nTL}, this number is $N(n, h)$.
    \item[(2b)] $\len(S) < h$: As proven in Proposition \ref{p.deflate}, $S = e_{n-1}^{k_0}Ve_{n-1}^{k_1}$, where $V$ is in the Jones Normal Form. Note that the maximum number of $e_{n-1}$'s in the braid word $\ftbraid^k_n = (\sigma_1\sigma_2 \cdots \sigma_{n-1})^k$ is given by $k$. Thus $k_0 + k_1 \leq k$. For the choice of $V\in \TL_{n-1}$, we have the count of the number of elements in $\TL_{n-1}$ is the $n$th Catalan number $C_n = \frac{1}{n+1}{{2n} \choose {n}}$. In addition, the number of ordered partitions of $k$ into 2 parts corresponding to $k_0, k_1$ + 2 is given by $p(n, 2)$, by Theorem \ref{t.starsbars}. Hence the total in this case is 
    $C_n \cdot p(n, 2).$
    \end{itemize} 



    Counts in Case 1, and Cases (2a) and (2b) add up to the total count given in Equation \refeq{eqp.count}.
    \end{proof}

\begin{remark}
It is likely that the upper bound is not sharp and can be further reduced, since there is a limit to the number of elements of $\TL_n$ that can be supported on a given braid $\ftbraid^k_n$ for a fixed homological degree $h$. 
\end{remark}

\subsection*{Acknowledgments} We are grateful to  Melissa Zhang, Matt Hogancamp, Mikhail Khovanov, and Eugene Gorsky for enlightening conversations about their work in relation to this project. We would especially like to thank Melissa Zhang for supplying the reference and proof of Proposition \ref{prop:monotone-path} and suggesting the name ``whittled" for our complex. We would like to acknowledge an AIM Square grant, a Summer Research in Mathematics (SWiM) at SLMath (formerly MSRI) grant, and an ICERM collaboration grant for providing excellent working conditions at the American Institute of Mathematics and at the Institute for Computational and  Experimental Research in Mathematics. C. Caprau was partially supported by NSF-RUI grant DMS 2204386. C. Lee was partially supported by NSF grants DMS 1907010 (University of South Alabama), DMS 2244923 (Texas State University) and CAREER-DMS 2440680 (Texas State University).  R. Sazdanovic was partially supported by NSF grant DMS-1854705 (NC State University).

\bibliographystyle{alpha}
\bibliography{references}

\begin{thebibliography}{GOR13}

\bibitem[Art47]{Artin-braids}
E.~Artin.
\newblock Theory of braids.
\newblock {\em Ann. of Math. (2)}, 48:101--126, 1947.

\bibitem[BN05]{BN05}
Dror Bar-Natan.
\newblock {K}hovanov's homology for tangles and cobordisms.
\newblock {\em Geom. Topol.}, 9(3):1443--1499, 2005.

\bibitem[BN07]{bar2007fast}
Dror Bar-Natan.
\newblock {Fast {K}hovanov homology computations}.
\newblock {\em J. Knot Theory Ramifications}, 16(03):243--255, 2007.

\bibitem[CK10]{CK-JonesW}
Benjamin Cooper and Slava Krushkal.
\newblock {{C}ategorification of the {J}ones-{W}enzl {P}rojectors}.
\newblock {\em Quantum Topol.}, 05 2010.

\bibitem[Deh02]{Dehornoy-Garside}
Patrick Dehornoy.
\newblock Groupes de {G}arside.
\newblock {\em Ann. Sci. \'{E}cole Norm. Sup. (4)}, 35(2):267--306, 2002.

\bibitem[FH19]{Feller-Hubbard}
Peter Feller and Diana Hubbard.
\newblock Braids with as many full twists as strands realize the braid index.
\newblock {\em J. Topol.}, 12(4):1069--1092, 2019.

\bibitem[GOR13]{GOR}
Eugene Gorsky, Alexei Oblomkov, and Jacob Rasmussen.
\newblock On stable {K}hovanov homology of torus knots.
\newblock {\em Exp. Math.}, 22(3):265--281, 2013.

\bibitem[Hog19]{Hogancamp2019}
Matthew Hogancamp.
\newblock A polynomial action on colored {$\mathfrak{sl}_2$} link homology.
\newblock {\em Quantum Topol.}, 10(1):1--75, 2019.

\bibitem[Jon83]{Jones-index}
V.~F.~R. Jones.
\newblock Index for subfactors.
\newblock {\em Invent. Math.}, 72(1):1--25, 1983.

\bibitem[Kah62]{topological-sort}
Arthur~B. Kahn.
\newblock {Topological sorting of large networks}.
\newblock {\em Communications of the ACM}, 5:558--562, 1962.

\bibitem[Kel24]{kelomäki2024discretemorsetheorykhovanov}
Tuomas Kelomäki.
\newblock {Discrete Morse Theory for Khovanov Homology}, 2024.

\bibitem[Kel25]{kelomäki2025morsematchingskhovanovhomology}
Tuomas Kelomäki.
\newblock {Morse matchings and Khovanov homology of 4-strand torus links},
  2025.

\bibitem[Kho00]{Khovanov-Jones}
Mikhail Khovanov.
\newblock A categorification of the {J}ones polynomial.
\newblock {\em Duke Math. J.}, 101(3):359--426, 02 2000.

\bibitem[KT08]{Kassel-Turaev}
Christian Kassel and Vladimir Turaev.
\newblock {\em Braid groups}, volume 247 of {\em Graduate Texts in
  Mathematics}.
\newblock Springer, New York, 2008.
\newblock With the graphical assistance of Olivier Dodane.

\bibitem[Lee25a]{Lee-ComputeViaJW}
Christine Ruey~Shan Lee.
\newblock {Computing Khovanov homology via categorified Jones-Wenzl
  projectors}.
\newblock {\em Contemp. Math.}, 827:31--48, 2025.

\bibitem[Lee25b]{Lee-StableVolume}
Christine Ruey~Shan Lee.
\newblock Stable {K}hovanov homology and volume.
\newblock {\em Proc. Amer. Math. Soc.}, 153(5):2219--2234, 2025.

\bibitem[Lic97]{Lickorish}
W.~B.~Raymond Lickorish.
\newblock {\em An introduction to knot theory}, volume 175 of {\em Graduate
  Texts in Mathematics}.
\newblock Springer-Verlag, New York, 1997.

\bibitem[MM01]{Murakami-Murakami-cjpvol}
Hitoshi Murakami and Jun Murakami.
\newblock The colored {J}ones polynomials and the simplicial volume of a knot.
\newblock {\em Acta Math.}, 186(1):85--104, 2001.

\bibitem[Pic20]{Piccirillo-Conway}
Lisa Piccirillo.
\newblock The {C}onway knot is not slice.
\newblock {\em Ann. of Math. (2)}, 191(2):581--591, 2020.

\bibitem[Ras10]{Rasmussen-slice}
Jacob Rasmussen.
\newblock Khovanov homology and the slice genus.
\newblock {\em Invent. Math.}, 182(2):419--447, 2010.

\bibitem[Roz10]{ROZ}
Lev Rozansky.
\newblock {{A}n infinite torus braid yields a categorified {J}ones-{W}enzl
  projector}.
\newblock {\em Fundam. Math.}, 225, 05 2010.

\bibitem[Sto07]{stosic2007homological}
Marko Sto{\v{s}}i{\'c}.
\newblock {{H}omological thickness and stability of torus knots}.
\newblock {\em Algebr. Geom. Topol.}, 7(1):261--284, 2007.

\bibitem[Tur17]{Turner-5lectures}
Paul Turner.
\newblock Five lectures on {K}hovanov homology.
\newblock {\em J. Knot Theory Ramifications}, 26(3):1741009, 41, 2017.

\end{thebibliography}

\end{document}